\newcommand{\Fqn}{\mathbb{F}_q^{n}}
\newcommand{\Fq}{\mathbb{F}_q}
\newtheorem{thm}{Theorem}[section]
\newtheorem{prop}{Proposition}[section]
\theoremstyle{remark}
\newtheorem{rem}{Remark}[section]
\theoremstyle{definition}
\numberwithin{equation}{section}
\begin{document}

\title{Multi-twisted  codes over finite fields and their dual codes}
\author{Anuradha Sharma{\footnote{Corresponding Author, Email address: anuradha@iiitd.ac.in}, Varsha Chauhan\thanks{Research support by UGC, India, is gratefully acknowledged.} ~and Harshdeep Singh
}\\
{\it Department of  Mathematics, IIIT-Delhi}\\{\it New Delhi 110020, India}}
\date{}
\maketitle
\begin{abstract}
Let $\mathbb{F}_{q}$ denote the finite field of order $q,$ let $m_1,m_2,\cdots,m_{\ell}$ be positive integers satisfying $\gcd(m_i,q)=1$ for $1 \leq i \leq \ell,$ and let $n=m_1+m_2+\cdots+m_{\ell}.$ Let $\Lambda=(\lambda_1,\lambda_2,\cdots,\lambda_{\ell})$ be fixed, where $\lambda_1,\lambda_2,\cdots,\lambda_{\ell}$ are non-zero elements of $\mathbb{F}_{q}.$  In this paper, we study the algebraic structure of $\Lambda$-multi-twisted codes of length $n$ over $\mathbb{F}_{q}$ and their dual codes with respect to the  standard  inner product on $\mathbb{F}_{q}^n.$  We   provide necessary and sufficient conditions for the existence of a self-dual $\Lambda$-multi-twisted  code of length $n$ over $\mathbb{F}_{q},$ and  obtain enumeration formulae for all self-dual and self-orthogonal  $\Lambda$-multi-twisted codes of length $n$ over $\mathbb{F}_{q}.$  We also derive some sufficient conditions under which a $\Lambda$-multi-twisted code is LCD.   We  determine the parity-check polynomial of all $\Lambda$-multi-twisted codes of length $n$ over $\mathbb{F}_{q}$ and obtain a BCH type bound on their  minimum Hamming distances. We also determine generating sets  of dual codes of some $\Lambda$-multi-twisted codes of length $n$ over $\mathbb{F}_{q}$ from the generating sets of the codes.  Besides this, we provide a trace description for all $\Lambda$-multi-twisted codes of length $n$ over $\mathbb{F}_{q}$ by viewing these codes as direct sums of certain concatenated codes, which leads to a method to construct these codes.   We also obtain a lower bound on their minimum Hamming distances using their multilevel concatenated structure.
\end{abstract}
{\bf Keywords:} Linear codes with complementary duals; Sesquilinear forms; Witt index; Totally isotropic spaces.\\
{\bf 2010 Mathematics Subject Classification}: 94B15

\section{Introduction}\label{intro}
\lettrine[lines=2]{\textbf{P}}{range} \cite{prange}
 first introduced and studied cyclic codes over finite fields, which form an important class of linear codes and can be effectively encoded and decoded using shift registers. Later, Townsend and Weldon \cite{weldon1} introduced and studied quasi-cyclic (QC) codes over finite fields, which are generalizations of cyclic codes. Kasami \cite{kasami} and Weldon \cite{weldon} further showed that these codes are asymptotically good due to their abundant population. Gulliver \cite{gulliver}  produced many record breaker QC codes in short lengths. Solomon and Tilborg \cite{tilborg}  established a link between these codes and
  convolutional codes.  Using this, they deduced many interesting properties of linear codes, which have  applications in coding theory and modulation. Ling and Sol\'{e} \cite{Ling} viewed QC codes over a finite field as  linear codes over a certain auxiliary ring and further studied their dual codes with respect to the standard inner product. In the same work, they also provided a trace description for all QC codes. They also explored the existence of some self-dual QC codes and enumerated this class of codes in some special cases. Siap and Kulhan \cite{Siap} introduced 
 generalized quasi-cyclic (GQC) codes over finite fields, which are generalizations of QC codes. They further studied  $1$-generator GQC codes and obtained a BCH type bound on their minimum Hamming distances.  Esmaeili and Yari \cite{Yari1}  further decomposed  GQC codes into linear codes using  Chinese Remainder Theorem and the results derived in  Ling and Sol\'{e} \cite{Ling3}. They also obtained an improved lower  bound on their  minimum Hamming distances.   G\"{u}neri et al. \cite{cem} decomposed GQC codes as direct sums of concatenated codes, which leads to a trace formula and a minimum distance bound for GQC codes.   Jia \cite{yan} decomposed quasi-twisted (QT) codes and their dual codes  over finite fields to a direct sum of  linear codes over rings, and  provided a method  to construct quasi-twisted codes by using generalized discrete Fourier transform. Saleh and Esmaeili \cite{saleh} gave some sufficient conditions under which a quasi-twisted code is LCD. In a recent work, Aydin and  Halilovic \cite{aydin} introduced  multi-twisted (MT) codes as generalization of quasi-twisted codes. They studied basic properties  of 1-generator multi-twisted codes and provided a lower bound on their minimum Hamming distances. The family of multi-twisted  codes is much broader as compared to quasi-twisted and constacyclic codes. 

Throughout this paper, let $\mathbb{F}_{q}$ denote the finite field of order $q,$ and let $\Lambda=(\lambda_1,\lambda_2,\cdots,\lambda_{\ell})$ be fixed, where $\lambda_1,\lambda_2,\cdots, \lambda_{\ell}$ are non-zero elements of $\mathbb{F}_{q}.$ Let $n=m_1+m_2+\cdots+m_{\ell},$ where $m_1,m_2,\cdots, m_{\ell}$ are positive integers coprime to $q.$  The main aim of this paper is to  study the algebraic structure of  $\Lambda$-multi-twisted codes of length $n$ over $\mathbb{F}_{q}$ and their dual codes with respect to the standard inner product on $\mathbb{F}_{q}^n.$  Enumeration formulae for  their two interesting subclasses, {\it viz.} self-dual and self-orthogonal  codes, are also obtained.   These enumeration formulae are useful in the determination of complete lists of inequivalent self-dual and self-orthogonal  $\Lambda$-multi-twisted codes \cite[Section 9.6]{huffbook}.   
Some sufficient conditions under which a $\Lambda$-multi-twisted code is LCD are also derived. Generating sets of dual codes of some $\Lambda$-multi-twisted codes are expressed in terms of   generating sets of the corresponding $\Lambda$-multi-twisted codes.  A trace description for these codes is  given  and a lower bound on their minimum Hamming distances is also obtained by extending the work of G\"{u}neri et al. \cite{cem} to $\Lambda$-multi-twisted codes.

 This paper is organized as follows: In Section \ref{prelim}, we state some basic results from groups and geometry that are needed to obtain enumeration formulae for self-dual, self-orthogonal and complementary-dual $\Lambda$-multi-twisted codes. In Section \ref{multi-twisted}, we  study $\Lambda$-multi-twisted codes over finite fields and their dual codes with respect to the standard inner product. In Section \ref{selfdual},  we study the algebraic structure of all self-dual and self-orthogonal  $\Lambda$-multi-twisted codes. We also derive necessary and sufficient conditions for the existence of a self-dual $\Lambda$-multi-twisted code and provide enumeration formulae for each of the two aforementioned classes of $\Lambda$-multi-twisted codes (Theorems \ref{enum1} and \ref{countselfortho}).  In Section \ref{Gen},  we obtain the parity-check polynomial for a $\rho$-generator $\Lambda$-multi-twisted code and a BCH type lower bound on their minimum Hamming distances (Theorems \ref{parity0} and \ref{parity1}). We also determine generating sets of dual codes of some $\rho$-generator  $\Lambda$-multi-twisted codes from generating sets of the corresponding $\Lambda$-multi-twisted codes (Theorem \ref{parity2}). We also obtain a lower bound on the dimension of some $[\Lambda,\Omega]$-multi-twisted codes of length $n$ over $\mathbb{F}_{q},$ where $\Lambda \neq \Omega$ (Theorem \ref{t23}). We also derive some sufficient conditions for a $\Lambda$-multi-twisted code to be  LCD  (Theorems \ref{L1} and \ref{L2}).  In Section \ref{Traceformula}, we provide a trace description for $\Lambda$-multi-twisted codes by viewing these codes as direct sums of certain concatenated codes, which leads to  a method to construct these codes and a lower bound on their minimum Hamming distances (Theorems \ref{trace} and \ref{bound}). 
\section{Some preliminaries}\label{prelim}
To enumerate all self-dual and self-orthogonal $\Lambda$-multi-twisted codes over finite fields,  we need some basic results from groups and geometry, which are as discussed below:

Let $V$ be a finite-dimensional vector space over the finite field $F$ and let $B$ be a  $\sigma$-sesquilinear form on $V,$ where $\sigma$ is an automorphism of $F.$ Then the pair $(V,B)$ is called a formed space. From now on, throughout this section, we suppose that $B$ is a reflexive and non-degenerate $\sigma$-sesquilinear form on $V.$  The formed space $(V,B)$ is called (i) a symplectic space if $B$ is an alternating form on $V,$ (ii) a unitary space if $B$ is a Hermitian form on $V,$ and (iii) an orthogonal space (or a finite geometry) if $B$ is a  symmetric form on $V.$  Further, a subspace of  the formed space $(V,B)$ is defined as a pair $(U,B_U)$, where $U$ is a subspace of $V$ and $B_U=B\restriction_{U \times U}.$  Let us define $U^{\perp}=\{v \in V: B(u,v)=0 \text{ for all }u \in U\}.$
 
\begin{thm}\cite{grove, taylor} \label{dimgen} If $(V,B)$ is a finite-dimensional reflexive and non-degenerate  space over the field $F$ and $U$ is a subspace of $V,$ then $U^{\perp}$ is a subspace of $V$ and $\text{dim}_{F}U^{\perp}=\text{dim}_{F}V-\text{dim}_{F}U.$  \end{thm}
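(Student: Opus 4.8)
The plan is to prove the two assertions in turn: first that $U^{\perp}$ is a subspace, then the dimension formula. The subspace claim is immediate: if $v_1,v_2\in U^{\perp}$ and $c\in F$, then for every $u\in U$ we have $B(u,v_1+cv_2)=B(u,v_1)+cB(u,v_2)=0$ by linearity of $B$ in the second argument (or linearity up to the automorphism $\sigma$, which does not affect vanishing), so $v_1+cv_2\in U^{\perp}$; and $0\in U^{\perp}$ trivially. Hence $U^{\perp}$ is a subspace of $V$.

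For the dimension formula, the key is to realise the pairing between $V$ and $U$ as a linear map into the dual space. First I would fix a basis of $U$, say $u_1,u_2,\dots,u_k$ with $k=\dim_F U$, and define the map $\varphi\colon V\to F^{k}$ by $\varphi(v)=\bigl(B(u_1,v)^{\sigma^{-1}},B(u_2,v)^{\sigma^{-1}},\dots,B(u_k,v)^{\sigma^{-1}}\bigr)$; applying $\sigma^{-1}$ to each coordinate turns the $\sigma$-semilinearity of $B$ in the second slot into honest $F$-linearity of $\varphi$ (if $B$ is bilinear one simply drops $\sigma^{-1}$). By construction $\ker\varphi=U^{\perp}$, so by the rank–nullity theorem $\dim_F U^{\perp}=\dim_F V-\operatorname{rank}\varphi$. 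It therefore remains to show $\varphi$ is surjective, i.e. $\operatorname{rank}\varphi=k=\dim_F U$. Here is where non-degeneracy of $B$ enters: if $\varphi$ were not surjective, its image would lie in a proper subspace of $F^{k}$, so there would be a nonzero vector $(c_1,\dots,c_k)\in F^{k}$ with $\sum_i c_i\,B(u_i,v)^{\sigma^{-1}}=0$ for all $v\in V$; untwisting, this says $B\bigl(\sum_i c_i^{\sigma}u_i,\,v\bigr)=0$ for all $v\in V$, and since $u_1,\dots,u_k$ are linearly independent and the $c_i$ are not all zero, $w:=\sum_i c_i^{\sigma}u_i$ is a nonzero vector of $V$ lying in the left radical of $B$, contradicting non-degeneracy. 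Hence $\varphi$ is onto, $\operatorname{rank}\varphi=\dim_F U$, and $\dim_F U^{\perp}=\dim_F V-\dim_F U$.

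The main obstacle, and really the only subtle point, is bookkeeping the automorphism $\sigma$: one must be careful that a $\sigma$-sesquilinear form is linear in one argument and $\sigma$-semilinear in the other, so that $U^{\perp}$ (defined via the argument in which $B$ is semilinear) is genuinely an $F$-subspace and the auxiliary map $\varphi$ is genuinely $F$-linear after the $\sigma^{-1}$ twist. Reflexivity of $B$ is what guarantees that using the left radical versus the right radical makes no difference, so the non-degeneracy hypothesis can be invoked in the form needed above. Everything else is the standard rank–nullity argument, and since this is a cited result from \cite{grove,taylor} I would keep the write-up brief, essentially as sketched.
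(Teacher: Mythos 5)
Your argument is correct: the $\sigma^{-1}$ twist does make $\varphi$ genuinely $F$-linear, its kernel is exactly $U^{\perp}$, and non-degeneracy together with reflexivity (so that the left and right radicals coincide) forces surjectivity, giving the dimension formula by rank--nullity. The paper itself gives no proof of this statement --- it is quoted from \cite{grove, taylor} --- and your rank--nullity argument is essentially the standard one found in those references, so there is nothing to reconcile.
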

 A subspace $U$ of $V$ is said to be (i) self-dual if it satisfies $U=U^{\perp},$  (ii)  self-orthogonal (or totally isotropic) if it satisfies $U \subseteq U^{\perp},$  (iii) non-degenerate (LCD) if it satisfies $U \cap U^{\perp}=\{0\},$ and (iv) dual-containing if $U^{\perp}\subseteq U.$  The Witt index of $V$ is defined as the dimension of a maximal  self-orthogonal subspace of $V.$  
 
 Next let $\mathbb{F}_{q}^{\mu}$ be the vector space consisting of all $\mu$-tuples over the finite field $\mathbb{F}_{q}.$ With respect to the standard inner product on $\mathbb{F}_{q}^{\mu},$ the following  hold.
  \begin{thm} \label{9}\begin{enumerate}\item[(a)] \cite[Th. 9.1.3]{huffbook} There exists a self-dual subspace (code) of even length $\mu$ over $\mathbb{F}_{q}$ if and only if $(-1)^{\mu/2}$ is a square in $\mathbb{F}_{q}.$ Furthermore, if $\mu$ is even and $(-1)^{\mu/2}$ is not a square in $\mathbb{F}_{q},$ then the dimension of a maximal self-orthogonal subspace  of length $\mu$ over $\mathbb{F}_{q}$ is $(\mu-2)/2.$ If $\mu$ is odd, then the dimension of a maximal self-orthogonal subspace of length $\mu$ over $\mathbb{F}_{q}$ is $(\mu-1)/2.$\item[(b)] \cite[p. 217] {pless} Let $\mu$ be even and $(-1)^{\mu/2}$ be a square in $\mathbb{F}_{q}.$ Then the number of distinct self-dual subspaces of even length $\mu$ over $\mathbb{F}_{q}$ is given by $\prod\limits_{a=1}^{\frac{\mu}{2}-1} (q^a+1)$ when $q$ is even and by $\prod\limits_{a=0}^{\frac{\mu}{2}-1} (q^a+1) $ when $q$ is odd. \end{enumerate}
  \end{thm}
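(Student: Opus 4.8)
These two assertions are classical; I would present the argument underlying the cited references, working inside the formed space $(\Fq^{\mu},B)$, where $B(x,y)=\sum_{i}x_iy_i$ is the standard inner product, a reflexive, non-degenerate, symmetric bilinear form. The plan is first to reduce everything to statements about totally isotropic subspaces. By Theorem~\ref{dimgen}, any self-orthogonal subspace $U\subseteq\Fq^{\mu}$ satisfies $\dim_{\Fq}U\le\mu-\dim_{\Fq}U$, i.e.\ $\dim_{\Fq}U\le\mu/2$, with equality exactly when $U=U^{\perp}$; hence a self-dual code of length $\mu$ is precisely a totally isotropic subspace of dimension $\mu/2$, and such a code exists if and only if the Witt index of $(\Fq^{\mu},B)$ equals $\mu/2$. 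Part (a) thus amounts to computing this Witt index.

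For $q$ odd I would invoke the classification of non-degenerate symmetric bilinear forms over $\Fq$: in each dimension there are exactly two equivalence classes, distinguished by whether the determinant of a Gram matrix is a square in $\Fq$. The Gram matrix of $B$ is $I_{\mu}$, of determinant $1$, whereas an orthogonal sum of $k$ hyperbolic planes has Gram determinant $(-1)^{k}$; therefore $(\Fq^{\mu},B)$ is hyperbolic, of Witt index $\mu/2$, exactly when $(-1)^{\mu/2}$ is a square, and otherwise it is the elliptic form of Witt index $(\mu-2)/2$ when $\mu$ is even, while for $\mu$ odd both classes have Witt index $(\mu-1)/2$. For $q$ even, $-1=1$ is a square, so only existence needs to be checked; the key observation is that in characteristic~$2$ one has $x\cdot x=\sum_i x_i^2=(\sum_i x_i)^2$, so every totally isotropic subspace is contained in the hyperplane $H=\{x:\sum_i x_i=0\}$, the radical of $B\restriction_{H\times H}$ equals $\langle(1,1,\dots,1)\rangle$ when $\mu$ is even (and is $\{0\}$ when $\mu$ is odd), and $H$ modulo this radical is a non-degenerate symplectic space of dimension $\mu-2$ (respectively $\mu-1$); its Lagrangian subspaces pull back to totally isotropic subspaces of $\Fq^{\mu}$ of dimension $\mu/2$ (respectively $(\mu-1)/2$). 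This settles (a).

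For (b) I would count maximal totally isotropic subspaces by the standard recursion that strips off one point at a time. When $q$ is even, the self-dual codes of length $\mu$ biject with the Lagrangian subspaces of the symplectic space $H/\langle(1,\dots,1)\rangle$ of dimension $2m=\mu-2$; counting incident pairs (a Lagrangian $L$, a line $\ell\subseteq L$) in two ways — using that every line of a symplectic space is isotropic and that $\ell^{\perp}/\ell$ is a symplectic space of dimension $2(m-1)$ — gives $N_m\cdot\frac{q^m-1}{q-1}=\frac{q^{2m}-1}{q-1}\,N_{m-1}$, hence $N_m=(q^m+1)N_{m-1}$ and, with $N_0=1$, $N_m=\prod_{a=1}^{m}(q^a+1)=\prod_{a=1}^{\mu/2-1}(q^a+1)$. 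When $q$ is odd and $(-1)^{\mu/2}$ is a square, the same bookkeeping inside the hyperbolic orthogonal space of dimension $2k=\mu$, using that it has $\frac{(q^{k}-1)(q^{k-1}+1)}{q-1}$ singular projective points and that for a singular point $p$ the space $p^{\perp}/p$ is a hyperbolic orthogonal space of dimension $2(k-1)$, gives $N_k=(q^{k-1}+1)N_{k-1}$; since a hyperbolic plane has exactly two singular points, $N_1=2$, so $N_k=2\prod_{a=1}^{k-1}(q^a+1)=\prod_{a=0}^{k-1}(q^a+1)$, the claimed count for $q$ odd.

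I expect the main obstacle to be the characteristic-$2$ analysis together with the explanation of the factor-of-two gap between the two formulae. One must verify carefully that for $q$ even a self-dual code is forced into $H$ and actually contains the all-ones vector — so that passing to the quotient by the radical is a genuine bijection onto Lagrangians of a symplectic space — whereas for $q$ odd the maximal totally singular subspaces of the hyperbolic quadric split into two Witt families, on which the full orthogonal group acts transitively but its subgroup $\Omega^{+}_{2k}(q)$ has two orbits; this is exactly what produces the extra factor $q^{0}+1=2$. A secondary but routine point is justifying the inputs to the recursions — the classification of forms over $\Fq$ and the counts of singular points — which are standard facts in the geometry of classical groups.
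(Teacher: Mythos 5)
The paper never proves this statement: Theorem \ref{9} is quoted as a black box from Huffman--Pless \cite[Th.\ 9.1.3]{huffbook} and Pless \cite{pless}, and is only consumed later (in Lemma \ref{lem53} and Theorem \ref{enum1}). So the only comparison possible is with those classical sources, and your reconstruction is correct and is essentially the argument they rest on: Theorem \ref{dimgen} reduces self-dual codes to totally isotropic subspaces of dimension $\mu/2$, so part (a) is a Witt-index computation; for $q$ odd the discriminant classification (Gram determinant $1$ for the identity form versus $(-1)^{\mu/2}$ for a sum of hyperbolic planes) gives exactly the stated trichotomy; for $q$ even the identity $x\cdot x=(\sum_i x_i)^2$ forces every self-orthogonal code into the hyperplane $H=\{x:\sum_i x_i=0\}$, and $H/\langle(1,\dots,1)\rangle$ is symplectic of dimension $\mu-2$ (respectively $H$ itself of dimension $\mu-1$ when $\mu$ is odd), which yields both existence and the maximal dimensions. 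The two counting recursions are also right: $N_m=(q^m+1)N_{m-1}$, $N_0=1$ for Lagrangians of a $2m$-dimensional symplectic space gives $\prod_{a=1}^{\mu/2-1}(q^a+1)$ with $2m=\mu-2$, and $N_k=(q^{k-1}+1)N_{k-1}$, $N_1=2$ for maximal totally singular subspaces of the hyperbolic orthogonal space of dimension $\mu=2k$ gives $\prod_{a=0}^{\mu/2-1}(q^a+1)$, the factor $2=q^0+1$ being exactly the two singular points of a hyperbolic plane, as you note. The one step you flag but leave open --- that for $q$ even every self-dual code actually contains the all-ones vector, so that passing to $H/\langle(1,\dots,1)\rangle$ is a genuine bijection onto Lagrangians --- is a one-liner you could have closed: since every codeword is isotropic, $\mathcal{C}\subseteq H=(1,\dots,1)^{\perp}$, hence $\mathcal{C}=\mathcal{C}^{\perp}\supseteq H^{\perp}=\langle(1,\dots,1)\rangle$. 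With that remark inserted, your proof is complete and self-contained modulo the standard classification of forms and the singular-point counts, which are the same inputs the cited references use.
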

 In the following theorem, we state some basic properties of finite-dimensional symplectic spaces over finite fields.
\begin{thm}\cite{taylor}\label{symplectic} Let $(V,B)$ be a $\mu$-dimensional symplectic space over $\mathbb{F}_{q}.$   Then the dimension $\mu$ of $V$ is even and the following hold.
\begin{enumerate}\item[(a)] The Witt index of $V$ is $\frac{\mu}{2}.$
\item[(b)] For $0 \leq k \leq \frac{\mu}{2},$ the number of distinct $k$-dimensional self-orthogonal subspaces of $V$ is given by $\prod\limits_{a=0}^{k-1}(q^{\mu-2a}-1)/(q^{a+1}-1)={\mu/2 \brack k}_{q} \prod\limits_{a=0}^{k-1}(q^{\frac{\mu}{2}-a}+1),$ where ${\mu/2 \brack k }_q =\prod\limits_{d=1}^{k-1}(q^{\mu/2}-q^d)/(q^k-q^d)$ is the $q$-binomial coefficient.
\end{enumerate}
\end{thm}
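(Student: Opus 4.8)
The plan is to prove the two parts of the theorem together by induction on $\mu$, exploiting the standard hyperbolic decomposition of a non-degenerate symplectic space. First I would recall (or quickly re-derive from the non-degeneracy of $B$) that any non-degenerate symplectic space $(V,B)$ over $\mathbb{F}_q$ contains a hyperbolic pair, i.e.\ vectors $e,f$ with $B(e,f)=1$; the span $H=\langle e,f\rangle$ is then a non-degenerate $2$-dimensional subspace, and by Theorem~\ref{dimgen} together with the non-degeneracy of $B\restriction_{H\times H}$ one gets the orthogonal direct sum $V=H\perp H^{\perp}$ with $H^{\perp}$ again symplectic and non-degenerate of dimension $\mu-2$. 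Iterating this yields both that $\mu$ is even and that $V$ is an orthogonal sum of $\mu/2$ hyperbolic planes. For part~(a), the subspace spanned by $\{e_1,\dots,e_{\mu/2}\}$ (one isotropic vector from each hyperbolic pair) is visibly totally isotropic of dimension $\mu/2$, while Theorem~\ref{dimgen} forces $\dim U\le \dim U^{\perp}=\mu-\dim U$, hence $\dim U\le \mu/2$ for every self-orthogonal $U$; this gives the Witt index exactly.

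For part~(b), I would count the $k$-dimensional totally isotropic subspaces by a counting-in-two-ways / orbit argument. The natural approach is to count pairs, or rather flags: first count the number $N_k$ of ordered linearly independent $k$-tuples $(v_1,\dots,v_k)$ spanning a totally isotropic subspace, then divide by $|\mathrm{GL}_k(\mathbb{F}_q)|=\prod_{d=0}^{k-1}(q^k-q^d)$, the number of ordered bases of a fixed $k$-space. To compute $N_k$, build the $k$-tuple one vector at a time: having chosen a totally isotropic $j$-space $W_j=\langle v_1,\dots,v_j\rangle$, the vector $v_{j+1}$ must lie in $W_j^{\perp}$ (so that $\langle v_1,\dots,v_{j+1}\rangle$ stays totally isotropic) but not in $W_j$ (for independence); since $W_j$ is totally isotropic we have $W_j\subseteq W_j^{\perp}$ and $\dim W_j^{\perp}=\mu-j$ by Theorem~\ref{dimgen}, so the number of admissible choices for $v_{j+1}$ is $q^{\mu-j}-q^{j}$. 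Hence $N_k=\prod_{j=0}^{k-1}(q^{\mu-j}-q^{j})$, and dividing by $|\mathrm{GL}_k(\mathbb{F}_q)|$ gives
\[
\prod_{j=0}^{k-1}\frac{q^{\mu-j}-q^{j}}{q^{k}-q^{j}}
=\prod_{j=0}^{k-1}\frac{q^{j}(q^{\mu-2j}-1)}{q^{j}(q^{k-j}-1)}
=\prod_{a=0}^{k-1}\frac{q^{\mu-2a}-1}{q^{a+1}-1},
\]
after reindexing. The last equality with the $q$-binomial form in the statement is then a routine algebraic rearrangement: factor $q^{\mu-2a}-1=(q^{\frac{\mu}{2}-a}-1)(q^{\frac{\mu}{2}-a}+1)$ and regroup the $(q^{\frac{\mu}{2}-a}-1)$ factors against the denominator to recover ${\mu/2\brack k}_q\prod_{a=0}^{k-1}(q^{\frac{\mu}{2}-a}+1)$.

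The main obstacle, such as it is, is bookkeeping rather than conceptual: one must be careful that the count $q^{\mu-j}-q^{j}$ of choices for $v_{j+1}$ is genuinely independent of which totally isotropic $j$-space $W_j$ was chosen (it is, because the only invariants that enter are $\dim W_j$ and $\dim W_j^{\perp}$, both determined by $j$ and $\mu$ via Theorem~\ref{dimgen}), so that the product formula for $N_k$ is valid; and one must keep the index shifts straight when passing to the $q$-binomial form. A secondary point to handle cleanly is the existence of a hyperbolic pair in the inductive step, which follows because non-degeneracy gives, for any $e\ne 0$, some $f_0$ with $B(e,f_0)\ne0$, and rescaling $f_0$ produces $B(e,f)=1$; here the alternating property ensures $e\notin\langle f\rangle$ automatically, so $\langle e,f\rangle$ really is $2$-dimensional and non-degenerate. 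No step should require more than these observations plus Theorems~\ref{dimgen} and the elementary count of bases of a $k$-dimensional space.
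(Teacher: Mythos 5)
Your proposal is correct, and it is in fact more than the paper itself provides: the paper gives no argument for Theorem~\ref{symplectic}, merely citing Taylor (p.~69 for part~(a) and Ex.~8.1(ii) for part~(b)), so what you have written is the standard self-contained proof that the citation points to. The two pillars of your argument are sound: for (a), the hyperbolic-plane decomposition (existence of a pair $e,f$ with $B(e,f)=1$, non-degeneracy of $\langle e,f\rangle$, and the orthogonal splitting $V=\langle e,f\rangle\perp\langle e,f\rangle^{\perp}$ via Theorem~\ref{dimgen}) gives evenness of $\mu$ and a totally isotropic subspace of dimension $\mu/2$, while $U\subseteq U^{\perp}$ together with $\dim U^{\perp}=\mu-\dim U$ caps the Witt index at $\mu/2$; for (b), the flag count is valid precisely because the form is alternating, so $B(v,v)=0$ is automatic and the extension $W_{j+1}=W_j+\langle v_{j+1}\rangle$ is totally isotropic if and only if $v_{j+1}\in W_j^{\perp}$, and because $\dim W_j^{\perp}=\mu-j$ depends only on $j$, making the factor $q^{\mu-j}-q^{j}$ uniform over all choices of $W_j$ (note $\mu-j>j$ since $j\le k-1<\mu/2$, so the count is positive). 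Dividing by $\prod_{d=0}^{k-1}(q^{k}-q^{d})$ and simplifying indeed yields $\prod_{a=0}^{k-1}(q^{\mu-2a}-1)/(q^{a+1}-1)$, and the passage to ${\mu/2\brack k}_{q}\prod_{a=0}^{k-1}(q^{\frac{\mu}{2}-a}+1)$ via the factorization $q^{\mu-2a}-1=(q^{\frac{\mu}{2}-a}-1)(q^{\frac{\mu}{2}-a}+1)$ is legitimate because part of the theorem (already established) is that $\mu$ is even; your identity uses the standard $q$-binomial convention $\prod_{d=0}^{k-1}(q^{\mu/2}-q^{d})/(q^{k}-q^{d})$, which is clearly what the paper intends (its displayed product starting at $d=1$ is a typo). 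No gaps.
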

\begin{proof}  One may refer to \cite[p. 69]{taylor} for proof of part (a), while part (b) is an Ex. 8.1 (ii) of \cite{taylor}.\end{proof}
In the following theorem, we  state some basic properties of finite-dimensional unitary spaces over finite fields.
\begin{thm}\cite{taylor} \label{unitary} Let $(V,B)$ be a $\mu$-dimensional unitary space over $\mathbb{F}_{q^2}.$ Let $\nu$ be the Witt index of $(V,B).$  Then we have the following:
\begin{enumerate}
\item[(a)] The Witt index $\nu$ of $V$ is given by $\nu=\left\{\begin{array}{cl} \frac{\mu}{2} &\text{if } \mu \text{ is even;}\vspace{1mm}\\
\frac{\mu-1}{2} & \text{if }\mu \text{ is odd.}\end{array}\right.$
\item[(b)]   For $0 \leq k \leq \nu,$ the number of distinct $k$-dimensional self-orthogonal subspaces of $V$ is given by \begin{equation*}\prod\limits_{a=\mu+1-2k}^{\mu} (q^a-(-1)^a)/ \prod\limits_{j=1}^{k}(q^{2j}-1).\end{equation*}
\end{enumerate}
\end{thm}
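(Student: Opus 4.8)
The plan is to deduce part (a) from a Witt-type decomposition of the unitary space, and part (b) from an inductive incidence (flag-counting) argument whose engine is the number of isotropic points of a non-degenerate unitary space.

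For part (a), recall that since $B$ is Hermitian the underlying automorphism of $\mathbb{F}_{q^2}$ must be the order-two map $\sigma\colon x\mapsto x^q$, and that the associated norm $x\mapsto x^{q+1}$ is a surjective group homomorphism $\mathbb{F}_{q^2}^{\times}\to\mathbb{F}_q^{\times}$ with kernel of size $q+1$. I would first show that every $2$-dimensional non-degenerate unitary space over $\mathbb{F}_{q^2}$ contains a non-zero isotropic vector: picking an anisotropic vector $v$ (one exists since $B$ is non-degenerate and Hermitian) and writing $v^{\perp}=\langle w\rangle$, the vector $\alpha v+\beta w$ is isotropic precisely when $N(\alpha)B(v,v)+N(\beta)B(w,w)=0$ (note $B(v,v),B(w,w)\in\mathbb{F}_q^{\times}$), and this has a non-zero solution because $-B(w,w)/B(v,v)\in\mathbb{F}_q^{\times}$ lies in the image of the norm. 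Since in a non-degenerate unitary space every non-zero isotropic vector lies in a hyperbolic plane $H$ and then $V=H\perp H^{\perp}$ with $H^{\perp}$ again non-degenerate of dimension $\mu-2$, iterating yields an orthogonal decomposition of $V$ into $\lfloor\mu/2\rfloor$ hyperbolic planes and an anisotropic subspace of dimension $\mu-2\lfloor\mu/2\rfloor\in\{0,1\}$. As the Witt index is additive over orthogonal direct sums and a hyperbolic plane, resp.\ a $1$-dimensional anisotropic space, has Witt index $1$, resp.\ $0$, we obtain $\nu=\lfloor\mu/2\rfloor$, which is the asserted value.

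For part (b), write $\theta_k$ for the number of $k$-dimensional totally isotropic subspaces of $V$ and argue by induction on $k$, with base case $\theta_0=1$. Counting in two ways the flags $(W,U)$ of totally isotropic subspaces with $W\subset U$, $\dim W=k-1$ and $\dim U=k$, yields
\[
\theta_k\cdot\frac{q^{2k}-1}{q^2-1}=\theta_{k-1}\cdot P_{d},\qquad d=\mu-2(k-1),
\]
where $P_d$ denotes the number of isotropic $1$-dimensional subspaces of a non-degenerate unitary space of dimension $d$; here one uses Theorem \ref{dimgen} to check that the Hermitian form induced on $W^{\perp}/W$ is non-degenerate of dimension $d$ (so that, all such spaces being isometric over $\mathbb{F}_{q^2}$, $P_d$ depends only on $d$), together with the bijection between totally isotropic $k$-spaces containing $W$ and isotropic points of $W^{\perp}/W$. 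To evaluate $P_d$, I would count the zeros of the standard Hermitian form $\sum_{i=1}^{d}x_i^{q+1}$ in $\mathbb{F}_{q^2}^{d}$: writing $N(d)$ for this number, the $(q+1)$-to-one surjectivity of $x\mapsto x^{q+1}$ onto $\mathbb{F}_q^{\times}$ gives the recursion $N(d)=(q+1)q^{2d-2}-qN(d-1)$ with $N(1)=1$, whence $N(d)=q^{2d-1}+(-1)^d(q^d-q^{d-1})$ and $P_d=(N(d)-1)/(q^2-1)$. Substituting into the incidence identity and simplifying, $\theta_k/\theta_{k-1}=(q^{\mu+1-2k}-(-1)^{\mu+1-2k})(q^{\mu+2-2k}-(-1)^{\mu+2-2k})/(q^{2k}-1)$, and an easy induction telescopes the right-hand side to the claimed value $\prod_{a=\mu+1-2k}^{\mu}(q^a-(-1)^a)/\prod_{j=1}^{k}(q^{2j}-1)$.

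The main work is bookkeeping: checking that the form induced on $W^{\perp}/W$ is non-degenerate Hermitian of the right dimension and that a totally isotropic $k$-space containing $W$ corresponds to a unique isotropic point of $W^{\perp}/W$ (so that $P_d$ is the same for every $W$; this uses the homogeneity of unitary geometry, i.e.\ Witt's extension theorem), and then verifying the routine identity $(q^2-1)P_d=(q^{d-1}-(-1)^{d-1})(q^d-(-1)^d)$ needed for the telescoping. Neither is a genuine obstacle, and, exactly as for the symplectic case in Theorem \ref{symplectic}, one may instead simply cite the corresponding statements in Taylor's book.
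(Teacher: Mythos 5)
Your argument is correct in substance, but it takes a genuinely different route from the paper: the paper does not prove Theorem \ref{unitary} at all, it simply cites Taylor (p.\ 116 for part (a) and Ex.\ 10.4 for part (b)), exactly as it does for the symplectic and quadratic counterparts. You instead supply a self-contained proof: for (a), surjectivity of the norm $x\mapsto x^{q+1}$ forces an isotropic vector in every non-degenerate unitary plane, whence a Witt decomposition into $\lfloor\mu/2\rfloor$ hyperbolic planes plus an anisotropic line or zero; for (b), the flag count $\theta_k\,(q^{2k}-1)/(q^2-1)=\theta_{k-1}P_d$ with $d=\mu-2(k-1)$, the point count $N(d)=(q+1)q^{2d-2}-qN(d-1)$, $N(1)=1$, giving $N(d)=q^{2d-1}+(-1)^d(q^d-q^{d-1})$, the identity $(q^2-1)P_d=(q^{d-1}-(-1)^{d-1})(q^d-(-1)^d)$, and the telescoping all check out and reproduce the stated formula. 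What your approach buys is transparency and verifiability (the paper's reader must chase an exercise in Taylor); what the citation buys is brevity and the homogeneity/classification facts you still have to invoke (uniqueness of the non-degenerate Hermitian form in each dimension, non-degeneracy of the form induced on $W^{\perp}/W$). Two small patches: the blanket statement that the Witt index is additive over orthogonal direct sums is false in general (already for real quadratic forms), so derive the upper bound $\nu\le\lfloor\mu/2\rfloor$ from Theorem \ref{dimgen} ($U\subseteq U^{\perp}$ gives $\dim U\le\mu-\dim U$) and use the hyperbolic planes only for the lower bound; and in the two-dimensional step, dispose first of the case $B(w,w)=0$, where $w$ itself is the required isotropic vector, before dividing by $B(v,v)$ and invoking surjectivity of the norm.
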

\begin{proof}  One may refer to \cite[p. 116]{taylor} for proof of part (a),  while  part (b) is an Ex. 10.4 of \cite{taylor}.\end{proof}
To study orthogonal spaces, let $q$ be an odd prime power and $V$ be a finite-dimensional vector space over $\mathbb{F}_{q}.$ Then the map $\varphi:V \rightarrow \mathbb{F}_q$ is called a quadratic map  on $V$ if it satisfies  (i) $\varphi(a v_1)=a^2\varphi (v_1)$ for all $a \in \mathbb{F}_{q}$ and $v_1 \in V,$ and (ii) the  map $B_{\varphi}: V \times V \rightarrow \mathbb{F}_{q},$ defined by $B_{\varphi}(v_1,v_2)=\varphi(v_1+v_2) - \varphi (v_1) -\varphi(v_2)$ for all $v_1,v_2 \in V,$ is a symmetric bilinear form on $V.$ The pair $(V,\varphi)$ is called a quadratic space over $\mathbb{F}_q.$ The quadratic space $(V,\varphi)$ over $\mathbb{F}_q$ is called non-degenerate if it satisfies $\varphi^{-1}(0)\cap V^{\perp}=\{0\},$ where $V^{\perp}=\{v\in V: B_{\varphi}(v,u)=0 \text{ for all }u \in V\}.$ If  the quadratic space $(V,\varphi)$ is non-degenerate, then the associated orthogonal space $(V,B_{\varphi})$ is called a finite geometry over $\mathbb{F}_{q}.$   On the other hand, with every symmetric bilinear form $B$ on a vector space $V$ over $\mathbb{F}_{q},$ one can associate the following quadratic map: $Q_B(v)=\frac{1}{2}B(v,v)$ for each $v \in V.$    In the following theorem, we state some basic properties of non-degenerate quadratic  spaces over a finite field of odd characteristic.
\begin{thm} \cite{pless, taylor}\label{quadratic} Let $(V,\varphi)$ be a $\mu$-dimensional non-degenerate quadratic space over the finite field $\mathbb{F}_{q}$ having an odd characteristic. Let  $\nu$ be the  Witt index of $(V,\varphi).$ Then we have the following:
\begin{enumerate}
\item[(a)] The Witt index $\nu$ of $V$ is given by \begin{equation*}\nu=\left\{\begin{array}{cl} \frac{\mu-1}{2} & \text{if }\mu \text{ is odd;}\\ \frac{\mu}{2} &\text{if } \mu \text{ is even and }q \equiv 1~(\text{mod }4) \text{ or } \mu \equiv 0~(\text{mod }4) \text{ and }  q \equiv 3~(\text{mod }4);\\\frac{\mu-2}{2} & \text{if } \mu \equiv 2~(\text{mod }4) \text{ and } q \equiv 3~(\text{mod }4).\end{array}\right.\end{equation*}
\item[(b)]   For $0 \leq k \leq \nu,$ the number of distinct $k$-dimensional self-orthogonal (or totally singular)  subspaces of $V$ is given by ${\nu \brack k }_q \prod\limits_{a=0}^{k-1}(q^{\nu-\epsilon-a}+1),$ where ${\nu \brack k }_q =\prod\limits_{d=1}^{k-1}(q^{\nu}-q^d)/(q^k-q^d)$ is the $q$-binomial coefficient and $\epsilon=2\nu-\mu+1.$ (Note that $\epsilon=1$ if $\nu=\frac{\mu}{2},$ $\epsilon=-1$ if $\nu=\frac{\mu-2}{2}$ and $\epsilon=0$ if $\nu=\frac{\mu-1}{2}.$)
 \end{enumerate}
\end{thm}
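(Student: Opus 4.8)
The plan is to derive both parts from the classification of non-degenerate quadratic forms over a finite field of odd characteristic, together with a recursive count of totally singular subspaces; since the statement is quoted from \cite{pless,taylor}, in the paper one may simply cite those references, but a self-contained argument would run as follows. Here, as in the application of the theorem made in this paper, $\varphi=Q_B$ for $B$ the standard inner product, so that $V$ has square discriminant; this is what makes the Witt index in (a) depend only on $\mu$ and $q$.

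For part (a) I would use the Witt decomposition $V=H\perp V_0$, where $H$ is an orthogonal sum of $\nu$ hyperbolic planes (so $\dim H=2\nu$) and $V_0$ is anisotropic. The key input is that an anisotropic non-degenerate quadratic space over $\mathbb{F}_q$ with $q$ odd has dimension at most $2$: a ternary form $ax^2+by^2+cz^2$ over $\mathbb{F}_q$ is isotropic, since the $(q+1)/2$ values of $ax^2$ and the $(q+1)/2$ values of $-c-by^2$ must overlap. Hence $\dim V_0\in\{0,1,2\}$, and $\mu=2\nu+\dim V_0$ forces $\nu=(\mu-1)/2$ when $\mu$ is odd, and $\nu\in\{\mu/2,(\mu-2)/2\}$ when $\mu$ is even. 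To settle the even case, recall that over $\mathbb{F}_q$ there are exactly two equivalence classes of non-degenerate quadratic forms in $\mu$ variables, distinguished by whether their discriminant is a square, and that the one of plus type (i.e.\ $\dim V_0=0$) is the orthogonal sum of $\mu/2$ hyperbolic planes, of discriminant $(-1)^{\mu/2}$ modulo squares. Since our $V$ has square discriminant (its Gram matrix is the identity), $V$ is of plus type --- so $\nu=\mu/2$ --- exactly when $(-1)^{\mu/2}$ is a square in $\mathbb{F}_q$, i.e.\ when $\mu\equiv0\pmod4$, or $\mu\equiv2\pmod4$ and $q\equiv1\pmod4$; in the one remaining even subcase, $\mu\equiv2\pmod4$ and $q\equiv3\pmod4$, it is of minus type with $\dim V_0=2$ and $\nu=(\mu-2)/2$. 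This is exactly the dichotomy already recorded in Theorem~\ref{9}.

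For part (b), write $W_k(\nu,\epsilon)$ for the number of totally singular $k$-dimensional subspaces of a non-degenerate quadratic space of Witt index $\nu$ and $\epsilon=2\nu-\mu+1$. Two ingredients are needed. First, the number of singular points (one-dimensional totally singular subspaces) is
\[
S(\nu,\epsilon)=\frac{(q^{\nu-\epsilon}+1)(q^{\nu}-1)}{q-1},
\]
obtained from the classical count of the zeros of a non-degenerate quadratic form in $\mu$ variables --- namely $q^{2m}$ when $\mu=2m+1$, and $q^{2m-1}\pm(q-1)q^{m-1}$ when $\mu=2m$ is of $\pm$ type --- by subtracting $1$, dividing by $q-1$, and then checking that the three cases collapse to the displayed expression. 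Second, for a singular vector $v$ the quotient $v^{\perp}/\langle v\rangle$ carries a well-defined non-degenerate quadratic map (well-defined because $\varphi(u+\lambda v)=\varphi(u)$ for $u\in v^{\perp}$ and $\lambda\in\mathbb{F}_q$), of dimension $\mu-2$ by Theorem~\ref{dimgen}, of Witt index $\nu-1$, and hence with the same value of $\epsilon$; moreover $U\mapsto U/\langle v\rangle$ is a bijection from the totally singular $k$-subspaces of $V$ containing $\langle v\rangle$ onto the totally singular $(k-1)$-subspaces of $v^{\perp}/\langle v\rangle$. Double-counting the incident pairs $(\langle v\rangle,U)$ in which $\langle v\rangle$ is a singular point lying in a totally singular $k$-subspace $U$ --- each such $U$ having exactly $\frac{q^{k}-1}{q-1}$ points, all singular --- yields
\[
W_k(\nu,\epsilon)\,\frac{q^{k}-1}{q-1}=S(\nu,\epsilon)\,W_{k-1}(\nu-1,\epsilon),
\]
hence $W_k(\nu,\epsilon)=\dfrac{(q^{\nu-\epsilon}+1)(q^{\nu}-1)}{q^{k}-1}\,W_{k-1}(\nu-1,\epsilon)$ with $W_0=1$. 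Unrolling this recursion gives
\[
W_k(\nu,\epsilon)=\prod_{a=0}^{k-1}\frac{(q^{\nu-\epsilon-a}+1)(q^{\nu-a}-1)}{q^{k-a}-1}={\nu\brack k}_q\prod_{a=0}^{k-1}\bigl(q^{\nu-\epsilon-a}+1\bigr),
\]
the last equality being the observation that $\prod_{a=0}^{k-1}\frac{q^{\nu-a}-1}{q^{k-a}-1}$ is the Gaussian binomial coefficient ${\nu\brack k}_q$. As a sanity check, the symplectic count in Theorem~\ref{symplectic}(b) is precisely the ``$\epsilon=0$'' instance of this recursion.

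The step I expect to be the main obstacle is the structural claim underlying the recursion: that $v^{\perp}/\langle v\rangle$ is again a non-degenerate quadratic space with Witt index exactly $\nu-1$, and that this quotient is, up to isometry, independent of the chosen singular vector $v$ (so that $W_{k-1}(\nu-1,\epsilon)$ is the same for every singular point). This rests on Witt's extension theorem --- one embeds $v$ in a hyperbolic pair of a Witt decomposition and identifies $v^{\perp}/\langle v\rangle$ with the orthogonal complement of that pair --- together with the compatibility of $\varphi$ with passing first to $v^{\perp}$ and then to the quotient. Everything else, including the zero-count giving $S(\nu,\epsilon)$ and the discriminant computation closing the even cases of (a), is routine bookkeeping.
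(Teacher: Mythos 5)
Your proof is mathematically sound, but it takes a genuinely different route from the paper, whose own ``proof'' of this theorem is essentially a citation: part (a) is deduced by applying Theorem 1 of Pless to the associated orthogonal space, and part (b) is quoted as Exercise 11.3 of Taylor. You instead give the standard self-contained argument: for (a), the Witt decomposition together with the fact that anisotropic quadratic spaces over $\mathbb{F}_q$ ($q$ odd) have dimension at most $2$, with the even-dimensional case settled by the discriminant; for (b), the singular-point count $S(\nu,\epsilon)=(q^{\nu-\epsilon}+1)(q^{\nu}-1)/(q-1)$ combined with the double-counting recursion through $v^{\perp}/\langle v\rangle$, unrolled into the stated Gaussian-binomial product. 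These steps all check out, and you correctly isolate Witt's extension theorem as the input that makes the quotient step uniform over singular points. What your route buys is transparency about a subtlety the citation hides: for even $\mu$ the Witt index is not determined by $\mu$ and $q$ alone but by the discriminant, so part (a) as stated requires a square-discriminant hypothesis, which you add explicitly. Be aware, however, that your justification of that hypothesis (``the Gram matrix is the identity'') does not describe the paper's actual use of the theorem: in the proof of Theorem \ref{countselfortho} the form $[\cdot,\cdot]$ restricted to $\mathcal{G}_u\times\mathcal{G}_u$ has Gram matrix $\mathrm{diag}(m/m_i)$ over the coordinates with $\epsilon_{u,i}=1$, and its discriminant $\prod_i m/m_i$ need not be a square in $\mathbb{F}_q$; so your caveat is well taken, but it exposes an assumption the paper makes tacitly rather than one that is automatic in the application. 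Part (b), which depends on the type only through $\epsilon=2\nu-\mu+1$, is proved by you in full generality and is fine as written.
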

\begin{proof}  As $q$ is odd,  the orthogonal space $(V,B_{\varphi})$ associated with $(V,\varphi)$  is a finite geometry over $\mathbb{F}_{q}$ having an orthogonal basis. Now by applying Theorem 1 of Pless \cite{pless}, part (a) follows immediately.  Part (b) is an Ex. 11.3 of \cite{taylor}.\end{proof}
%
\section{Multi-twisted (MT) codes over finite fields and their dual codes}\label{multi-twisted}
Throughout this paper, let $\mathbb{F}_{q}$ denote the finite field of order $q,$ let $m_1,m_2,\cdots,m_{\ell}$ be positive integers coprime to $q,$ and let $n=m_1+m_2+\cdots+m_{\ell}.$ Let $\mathbb{F}_{q}^n$ denote the vector space consisting of all $n$-tuples over $\mathbb{F}_{q}.$ Let $\Lambda=(\lambda_1,\lambda_2,\cdots,\lambda_{\ell})$ and $\Lambda'=(\lambda_1^{-1},\lambda_2^{-1},\cdots,\lambda_{\ell}^{-1}),$ where $\lambda_1,\lambda_2,\cdots,\lambda_{\ell}$ are non-zero elements of $\mathbb{F}_{q}.$  Then a $\Lambda$-multi-twisted  module $V$ is an $\mathbb{F}_{q}[x]$-module of the form $V=\prod\limits_{i=1}^{\ell}V_i,$ where $V_i=\mathbb{F}_{q}[x]/\langle x^{m_i}-\lambda_i \rangle$ for $1 \leq i \leq \ell.$ We note that there exists an $\mathbb{F}_{q}$-linear vector space isomorphism from $\mathbb{F}_{q}^n$ onto $V.$ We shall represent each element $a \in \mathbb{F}_{q}^n$ as $a=(a_{1,0},a_{1,1},\cdots,a_{1,m_1-1}; a_{2,0},a_{2,1},\cdots, a_{2,m_2-1}; \cdots; a_{\ell,0}, a_{\ell,1},\cdots, a_{\ell,m_{\ell}-1}) $ and the corresponding element $a(x) \in V$  as $a(x) =(a_1(x),a_2(x),\cdots,a_{\ell}(x)),$ where $a_i(x)=\sum\limits_{j=0}^{m_i-1}a_{i,j}x^j \in V_i $ for $1 \leq i \leq \ell.$    Now a $\Lambda$-multi-twisted (MT) code of length $n$ over $\mathbb{F}_{q}$ is defined as an $\mathbb{F}_{q}[x]$-submodule of the $\Lambda$-multi-twisted module $V.$ Equivalently, a linear code $\mathcal{C}$ of length $n$ over $\mathbb{F}_{q}$ is called a $\Lambda$-multi-twisted code if $c=(c_{1,0},c_{1,1},\cdots,c_{1,m_1-1}; c_{2,0},c_{2,1},\cdots,\textit{•} c_{2,m_2-1}; \cdots; c_{\ell,0}, c_{\ell,1},\cdots, c_{\ell,m_{\ell}-1}) \in \mathcal{C},$ then its $\Lambda$-multi-twisted shift $T_{\Lambda}(c)=(\lambda_{1}c_{1,m_1-1},c_{1,0},\cdots,c_{1,m_1-2}; \lambda_2 c_{2,m_2-1},c_{2,0},\cdots, c_{2,m_2-2}; \cdots; \lambda_{\ell} c_{\ell,m_{\ell}-1}, c_{\ell,0},\cdots, c_{\ell,m_{\ell}-2}) $ is also a codeword of $\mathcal{C}.$ In particular, when $m_1=m_2=\cdots=m_{\ell}$ and $\lambda_1=\lambda_2=\cdots=\lambda_{\ell},$ $\Lambda$-multi-twisted codes are permutation-equivalent to quasi-twisted codes of length $m_1 \ell$ over $\mathbb{F}_{q}.$
When $\lambda_i=1$ for $1 \leq i \leq \ell,$  $\Lambda$-multi-twisted codes  coincide with generalized quasi-cyclic (GQC) codes, which are first  defined and studied by Siap and Kulhan \cite{Siap}. Furthermore, when $m_1=m_2=\cdots=m_{\ell}$ and $\lambda_i=1$ for $1 \leq i \leq \ell,$ $\Lambda$-multi-twisted codes are permutation-equivalent to quasi-cyclic (QC) codes of  length $m_1 \ell$ and index $\ell$ over $\mathbb{F}_{q}.$ Besides this, when $\ell=1,$  $\Lambda$-multi-twisted codes are $\lambda_1$-constacyclic codes of length $m_1$ over $\mathbb{F}_{q}.$

To study the algebraic structure of $\Lambda$-multi-twisted codes, let $g_1(x),g_2(x),\cdots,g_r(x)$ be all the distinct irreducible factors of the polynomials $x^{m_1}-\lambda_1,x^{m_2}-\lambda_2, \cdots, x^{m_{\ell}}-\lambda_{\ell} $ over $\Fq.$ For $1 \le w \le r$ and $1 \le i \le \ell,$ let us define $$\epsilon_{w,i}=\left\{\begin{array}{ll} 1 & \text{if } g_{w}(x)\text{ divides } x^{m_i} - \lambda_i \text{ in }\mathbb{F}_{q}[x];\\0 & \text{ otherwise.}\end{array}\right.$$
Then for $1 \leq i \leq \ell,$ we note that $x^{m_i}-\lambda_i=\prod\limits_{w=1}^r g_w(x)^{\epsilon_{w,i}}$ is the irreducible factorization of $x^{m_i}-\lambda_i$ over $\mathbb{F}_{q}.$ Now for each $i,$ by applying Chinese Remainder Theorem, we get \vspace{-2mm}\begin{equation*} V_i \simeq  \bigoplus_{w=1}^{r} \epsilon_{w,i}F_w \vspace{-2mm}\end{equation*} with $F_w=\frac{\mathbb{F}_q[x]}{\langle g_w(x)\rangle}$ for $1 \leq w \leq r;$  the corresponding ring isomorphism from $V_i$ onto $\bigoplus\limits_{w=1}^{r} \epsilon_{w,i}F_w$  is given by $a_i(x)\mapsto \sum\limits_{w=1}^r\Big(\epsilon_{w,i}a_i(x)+\langle g_w(x) \rangle \Big)$ for each $a_i(x) \in V_i.$  This further induces a ring isomorphism from $V$ onto $\bigoplus\limits_{w=1}^{r} \Big( \underbrace{ \epsilon_{w,1} F_w, \epsilon_{w,2} F_w,\cdots , \epsilon_{w,\ell} F_w }_{\mathcal{G}_{w}}\Big),$ which is given by $(a_1(x),a_2(x),\cdots,a_{\ell}(x))\mapsto \sum\limits_{w=1}^r \Big(\epsilon_{w,1}a_1(x)+\langle g_w(x)\rangle ,\epsilon_{w,2}a_2(x)\\+\langle g_w(x) \rangle,\cdots,\epsilon_{w,{\ell}}a_{\ell}(x)+\langle g_w(x) \rangle\Big)$ for each $(a_1(x),a_2(x),\cdots,a_{\ell}(x)) \in V.$ 
 If  $d_w=\text{deg }g_w(x),$ then we see that  $F_w \simeq \mathbb{F}_{q^{d_w}}$  for $1 \leq w \leq r.$ Next let $\epsilon_w=\sum\limits_{i=1}^{\ell} \epsilon_{w,i}$  for each $w.$
It is easy to see that for $1 \leq w \leq r,$ $\mathcal{G}_w=\Big(\epsilon_{w,1} F_w, \epsilon_{w,2} F_w,\cdots , \epsilon_{w,\ell} F_w \Big)$ is an $\epsilon_{w}$-dimensional vector space over $F_w.$ As $\mathcal{G}_w \subset F_w^{\ell}, $  a $\Lambda$-multi-twisted code can be viewed as a submodule of $ \bigoplus\limits_{w=1}^{r} F_w^{\ell}$ over $\bigoplus\limits_{w=1}^{r}F_w.$  From the above discussion, we deduce the following:
{\thm \label{t11} Let $\mathcal{C}$ be a $\Lambda$-multi-twisted code of length $n$ over $\mathbb{F}_{q},$ which is finitely-generated as an $\mathbb{F}_{q}[x]$-submodule of $V$ by
$\{(a_{d,1}(x),a_{d,2}(x),\cdots,a_{d,{\ell}}(x)):1\leq d \leq k \}\subset V$. Then the code $\mathcal{C}$ can be uniquely expressed as $\mathcal{C}=\bigoplus\limits_{w=1}^r \mathcal{C}_w,$  where for $1 \leq w \leq r,$ $\mathcal{C}_w$ is a linear code of length $\ell$ over $F_w,$ given by  \begin{equation*}\mathcal{C}_w = \text{Span}_{F_w}\{(\epsilon_{w,1}a_{d,1}(\alpha_w),\epsilon_{w,2}a_{d,2}(\alpha_w),\cdots,\epsilon_{w,{\ell}}a_{d,{\ell}}(\alpha_w)):1 \leq d \leq k\} \end{equation*} with  $\alpha_w$ as a zero of $g_w(x)$ in $F_w,$  (the codes $\mathcal{C}_{1},$ $\mathcal{C}_{2}, \cdots, \mathcal{C}_{r}$ are called the constituents of $\mathcal{C}$). Furthermore, we have  $\text{dim}_{\mathbb{F}_{q}}\mathcal{C}=\sum\limits_{w=1}^{r}\text{dim}_{F_w}\mathcal{C}_{w} \text{deg }g_w(x).$
Conversely, if $\mathcal{D}_{w}$ is a linear code of length $\ell$ over $F_w$ for $1 \leq w \leq r,$ then $\mathcal{D}=\bigoplus\limits_{w=1}^{r}\mathcal{D}_{w}$ is a $\Lambda$-multi-twisted code of length $n$ over $\mathbb{F}_{q}.$}

Next to enumerate all $\Lambda$-multi-twisted codes, we recall that for non-negative integers $b,k$ with $b \leq k$ and a prime power $Q,$ the number  of distinct $b$-dimensional subspaces of a $k$-dimensional vector space over $\mathbb{F}_{Q}$ is given  by the $Q$-binomial coefficient ${k\brack b}_Q=\frac{(Q^k-1)(Q^k-Q)\cdots(Q^k-Q^{b-1})}{(Q^b-1)(Q^b-Q)\cdots(Q^b-Q^{b-1})}.$This  implies that the total number of distinct subspaces of a $k$-dimensional vector space over $\mathbb{F}_{Q}$ is given by \begin{equation}\label{sub}N(k,Q)=1 + \sum\limits_{b=1}^k {k\brack b}_Q.\end{equation}
In the following theorem, we enumerate all $\Lambda$-multi-twisted codes of length $n$ over $\mathbb{F}_{q}.$
 {\thm Let $\lambda_1,\lambda_2,\cdots,\lambda_{\ell} \in \Fq \setminus \{0\}$ be fixed. Then the total number of distinct $\Lambda$-multi-twisted codes  of length $n$ over $\mathbb{F}_{q}$ is given by 
$N_{\Lambda}=\prod\limits_{w=1}^r \left( 1 + \sum\limits_{b=1}^{\epsilon_{w}} { \epsilon_{w} \brack b}_{q^{d_{w}}}\right),$ where  $d_{w}=\text{deg }g_{w}(x)$ for each $w.$ \begin{proof} By Theorem \ref{t11}, we see that all the  $\Lambda$-multi-twisted codes of length $n$ over $\mathbb{F}_{q}$ are of the form $\bigoplus\limits_{w=1}^r \mathcal{C}_w,$  where $\mathcal{C}_w$ is a linear code of length $\ell$ over $F_w$ for $1 \leq w \leq r.$ Now using \eqref{sub} and  the fact that $F_w \simeq \mathbb{F}_{q^{d_{w}}},$   the desired result follows immediately.   \end{proof}}
\begin{rem}It is easy to see that some $\Lambda$-multi-twisted codes  can also be viewed as $\Omega$-multi-twisted codes, where $\Omega \neq \Lambda.$ For example, when $q=7,$ $m_1=2$ and $m_2=1,$ the linear code $\mathcal{C}$ with the basis set as $\{(1,0;0), (0,1;0)\}$ is a $(2,1)$-multi-twisted as well as $(4,1)$-multi-twisted code of length 3 over $\mathbb{F}_{7}.$ Thus the total number of  distinct multi-twisted codes of length $n$ over $\mathbb{F}_{q}$ is not equal to $(q-1)^{\ell}N_{\Lambda}.$ \end{rem}

Next we shall study dual codes of $\Lambda$-multi-twisted codes with respect to the  standard inner product on $\mathbb{F}_{q}^n,$ which is a map $\langle\cdot,\cdot\rangle: \Fqn \times \Fqn \longrightarrow \mathbb{F}_q,$ defined as $\langle x, y \rangle =\sum\limits_{i=1}^{\ell} \sum\limits_{j=0}^{m_i-1} x_{i,j}y_{i,j}$ for all $x,y \in \mathbb{F}_{q}^n.$ It is well-known that $\left<\cdot,\cdot\right>$ is a non-degenerate and symmetric bilinear form on $\mathbb{F}_{q}^n.$  If $\mathcal{C}$ is a $\Lambda$-multi-twisted code of length $n$ over $\mathbb{F}_{q},$ then its dual code $\mathcal{C}^{\perp}$ is defined as  $\mathcal{C}^{\perp} =
\{ a  \in \mathbb{F}_{q}^n : \langle a, c \rangle=0 \text{ for all } c\in \mathcal{C}\}.$ It is easy to see that $\mathcal{C}^{\perp}$ is a $\Lambda'$-multi-twisted code of length $n$ over $\mathbb{F}_{q},$ i.e., $\mathcal{C}^{\perp}$ is a linear code of length $n$ over $\mathbb{F}_{q}$ satisfying the following:
if $d=(d_{1,0},d_{1,1},\cdots,d_{1,m_1-1}; d_{2,0},d_{2,1},\cdots, d_{2,m_2-1}; \cdots; d_{\ell,0}, d_{\ell,1},\cdots, d_{\ell,m_{\ell}-1}) \in \mathcal{C}^{\perp},$ then its $\Lambda'$-multi-twisted shift $T_{\Lambda'}(d)=(\lambda_{1}^{-1}d_{1,m_1-1},d_{1,0},\cdots,d_{1,m_1-2}; \lambda_2^{-1} d_{2,m_2-1},d_{2,0},\cdots, d_{2,m_2-2}; \cdots; \lambda_{\ell}^{-1} d_{\ell,m_{\ell}-1}, d_{\ell,0},\cdots, d_{\ell,m_{\ell}-2})  \in \mathcal{C}^{\perp}.$
Equivalently, $\mathcal{C}^{\perp}$ is an $\mathbb{F}_{q}[x]$-submodule of the $\Lambda'$-multi-twisted module $V'=\prod\limits_{i=1}^{\ell} V_i',$ where $V_i'=\mathbb{F}_{q}[x]/\langle x^{m_i}-\lambda_i^{-1}\rangle$ for $1 \leq i \leq \ell.$

To study their  dual codes in more detail, let $m$ be the order of the polynomial $\text{lcm}[x^{m_1}-\lambda_1,x^{m_2}-\lambda_2,\cdots,x^{m_{\ell}}-\lambda_{\ell}] $ in $\mathbb{F}_{q}[x].$ It is easy to observe that $m=\text{lcm}[m_1O(\lambda_1),m_2O(\lambda_2),\cdots,m_{\ell}O(\lambda_{\ell})]$ and that $T_{\Lambda}^m=T_{\Lambda'}^m=I,$ where $I$ is the identity operator on $\mathbb{F}_{q}^n$ and $O(\lambda_i)$ denotes the multiplicative order of $\lambda_i$ for each $i.$ For $1 \leq i \leq \ell,$ define a conjugation map $\overline{ ^{\hspace{2mm}} }: V_i' \rightarrow V_i $ as  $\overline{b_i(x)}= b_i(x^{-1})$ for each $b_i(x) \in V_i',$ where  $x^{-1}=\lambda_i^{-1}x^{m_i-1} \in V_i.$ Next
 we define a mapping $\left(\cdot,\cdot\right):V \times V' \longrightarrow \frac{\mathbb{F}_{q}[x]}{\langle x^m-1\rangle}$ as $\left(a(x),b(x)\right) :=\sum\limits_{i=1}^{\ell} \lambda_ia_i(x)\overline{b_i(x)} \Big( \frac{x^m -1}{x^{m_i} - \lambda_i}\Big)$ for 
 $a(x)= (a_1(x),a_2(x),\cdots,a_{\ell}(x)) \in V$ and $ b(x)= (b_1(x),b_2(x),\cdots,b_{\ell}(x)) \in V',$ where $\frac{\mathbb{F}_{q}[x]}{\langle x^m-1 \rangle}$ is viewed as an $\mathbb{F}_{q}[x]$-module.  Then we have the following:

{\lem\label{l4} \begin{enumerate}\item[(a)]  For $a(x) \in V$ and $b(x) \in V',$ we have
$$\begin{array}{ll}
\left(a(x),b(x)\right) & = \left< a , b\right> + \left< a , T_{\Lambda'}(b) \right>x + \cdots + \left< a , T_{\Lambda'}^{m-1}(b) \right> x^{m-1}\\
 & = \left< b , a \right> + \left<b , T_{\Lambda}^{m-1}(a) \right>x + \cdots + \left<b , T_{\Lambda}(a) \right> x^{m-1} \text{ in } \frac{\mathbb{F}_{q}[x]}{\langle x^m-1 \rangle}.\end{array}$$
 \item[(b)] The mapping $\left(\cdot,\cdot \right)$ is a non-degenerate  and Hermitian $\overline{ ^{\hspace{2mm}} }$-sesquilinear form on $V \times V'.$ \end{enumerate}
 \begin{proof} 
\begin{enumerate}\item[(a)] To prove this, we first write $a(x)=(a_1(x),a_2(x),\cdots,a_{\ell}(x))$ and $b(x)=(b_1(x),b_2(x),\cdots,b_{\ell}(x)),$ where $a_i(x) =\sum\limits_{j=0}^{m_i-1}a_{i,j}x^j\in V_i$ and $b_i(x)=\sum\limits_{j=0}^{m_i-1}b_{i,j}x^j \in V_i' $ for each $i.$ For $1 \leq i \leq \ell,$ we observe that $\frac{\lambda_i(x^m -1)}{x^{m_i} - \lambda_i} = 1 + \lambda_i^{-1}x^{m_i} + \lambda_i^{-2}x^{2m_i} + \cdots + \lambda_i^{-(\frac{m}{m_i} -2)}x^{(\frac{m}{m_i} -2)m_i} + \lambda_i x^{(\frac{m}{m_i} -1)m_i}.$ Using this, we get $\left(a(x),b(x)\right) = \left< {a} , {b} \right> + \left< {a} , T_{\Lambda'}({b}) \right>x + \cdots + \left< {a} , T_{\Lambda'}^{m-1}({b}) \right> x^{m-1}.$
As $\left<{a},T_{\Lambda'}^j({b})\right>=\left<{b},T_{\Lambda}^{m-j}({a})\right>$ for $0\leq j \leq m-1,$ we get $\left({a}(x),{b}(x)\right)= \left< {b} , {a} \right> + \left< {b} , T_{\Lambda}^{m-1}({a}) \right>x + \cdots + \left< {b} , T_{\Lambda}({a}) \right> x^{m-1} .$

\item[(b)] It is easy to observe that $\left(\cdot,\cdot \right)$ is a Hermitian $\overline{ ^{\hspace{2mm}} }$-sesquilinear form on $V \times V'.$ To prove the non-degeneracy of $\left( \cdot,\cdot \right),$  suppose that $\left({a}(x),{b}(x)\right)=0$ for all ${b}(x)\in V'.$ Here we need to show that ${a}(x)=0.$ For this, by part (a), we see that  $\left({a}(x),{b}(x)\right)=\left< {a} , {b} \right> + \left< {a} , T_{\Lambda'}({b}) \right>x + \cdots + \left< {a} , T_{\Lambda'}^{m-1}({b}) \right> x^{m-1}=0$ for all ${b} \in \mathbb{F}_{q}^n.$ This implies that $\left< {a} , {b} \right> = \left< {a} , T_{\Lambda'}({b}) \right> =  \cdots = \left< {a} , T_{\Lambda'}^{m-1}({b}) \right>=0$ for all ${b}\in \mathbb{F}_{q}^n.$ As $\left<\cdot, \cdot \right>$ is a non-degenerate bilinear form on $\mathbb{F}_{q}^n,$ we get ${a}=0,$ which gives ${a}(x)=0.$ This proves (b). \end{enumerate}\vspace{-5mm}\end{proof}}

From the above, we deduce the following:

\begin{prop} If $\mathcal{C} \subseteq V$ is a $\Lambda$-multi-twisted code of length $n$ over $\mathbb{F}_{q},$ then  its dual code $\mathcal{C}^{\perp} \subseteq V'$ is a $\Lambda'$-multi-twisted code  of length $n$ over $\mathbb{F}_{q}$ and  is given by $\mathcal{C}^{\perp}=\{ b(x) \in V': \left(a(x),b(x)\right)=0 \text{ for all }a(x) \in \mathcal{C}\}.$ \end{prop}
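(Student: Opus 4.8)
The plan is to show the two-way inclusion between $\mathcal{C}^{\perp}$ (defined via the standard inner product $\langle\cdot,\cdot\rangle$ on $\mathbb{F}_q^n$) and the set $\widetilde{\mathcal{C}} := \{\, b(x) \in V' : (a(x),b(x)) = 0 \text{ for all } a(x) \in \mathcal{C}\,\}$, and along the way record that $\widetilde{\mathcal{C}}$ is $\Lambda'$-multi-twisted. The key tool is Lemma~\ref{l4}(a), which expands $(a(x),b(x))$ coordinate-by-coordinate in $x$ as $\sum_{j=0}^{m-1}\langle a, T_{\Lambda'}^j(b)\rangle x^j$ in $\mathbb{F}_q[x]/\langle x^m-1\rangle$. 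Since $1, x, \dots, x^{m-1}$ form an $\mathbb{F}_q$-basis of that quotient, the single polynomial identity $(a(x),b(x)) = 0$ is equivalent to the simultaneous scalar conditions $\langle a, T_{\Lambda'}^j(b)\rangle = 0$ for all $0 \le j \le m-1$.

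First I would prove $\mathcal{C}^{\perp} \subseteq \widetilde{\mathcal{C}}$. Take $b \in \mathcal{C}^{\perp}$ and any $a \in \mathcal{C}$. Because $\mathcal{C}$ is a $\Lambda$-multi-twisted code, it is closed under $T_{\Lambda}$, hence under every power $T_{\Lambda}^j$; so $T_{\Lambda}^j(a) \in \mathcal{C}$ for all $j$. Using the second line of Lemma~\ref{l4}(a), the coefficient of $x^j$ in $(a(x),b(x))$ is $\pm\langle b, T_{\Lambda}^{m-j}(a)\rangle$ (with the obvious convention at $j=0$), and each such inner product vanishes because $b \in \mathcal{C}^{\perp}$ and $T_{\Lambda}^{m-j}(a) \in \mathcal{C}$. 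Hence $(a(x),b(x)) = 0$, so $b \in \widetilde{\mathcal{C}}$. Conversely, if $b \in \widetilde{\mathcal{C}}$ then in particular the $j=0$ coefficient $\langle a,b\rangle$ vanishes for every $a \in \mathcal{C}$ (taking $j=0$ in the first line of Lemma~\ref{l4}(a), since $T_{\Lambda'}^0 = I$), so $b \in \mathcal{C}^{\perp}$. This gives $\widetilde{\mathcal{C}} \subseteq \mathcal{C}^{\perp}$ and the equality $\mathcal{C}^{\perp} = \widetilde{\mathcal{C}}$.

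It remains to note that $\mathcal{C}^{\perp}$ is itself a $\Lambda'$-multi-twisted code. This was already observed in the paragraph preceding Lemma~\ref{l4}: for $d \in \mathcal{C}^{\perp}$ and $c \in \mathcal{C}$ one checks $\langle T_{\Lambda'}(d), c\rangle = \langle d, T_{\Lambda}(c)\rangle$ directly from the definitions of the twisted shifts and the standard inner product (the factor $\lambda_i^{-1}$ in $T_{\Lambda'}$ exactly cancels the factor $\lambda_i$ produced when moving the shift across to $T_{\Lambda}$), and since $T_{\Lambda}(c) \in \mathcal{C}$ we get $\langle T_{\Lambda'}(d), c\rangle = 0$, i.e. $T_{\Lambda'}(d) \in \mathcal{C}^{\perp}$. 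Equivalently, one can invoke Lemma~\ref{l4}(b): $(\cdot,\cdot)$ is a non-degenerate reflexive $\overline{\phantom{x}}$-sesquilinear form on $V \times V'$, so $\mathcal{C}^{\perp} = \widetilde{\mathcal{C}}$ is an $\mathbb{F}_q[x]$-submodule of $V'$, which is precisely the definition of a $\Lambda'$-multi-twisted code.

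The argument is essentially bookkeeping once Lemma~\ref{l4} is in hand; the only mildly delicate point — and the step I would be most careful about — is the relation $\langle a, T_{\Lambda'}^j(b)\rangle = \langle b, T_{\Lambda}^{m-j}(a)\rangle$, i.e. that $T_{\Lambda'}$ is the adjoint of $T_{\Lambda}$ with respect to $\langle\cdot,\cdot\rangle$ up to the index reversal $j \leftrightarrow m-j$. This is exactly where the choice $\Lambda' = (\lambda_1^{-1},\dots,\lambda_\ell^{-1})$ and the identity $T_{\Lambda}^m = T_{\Lambda'}^m = I$ are used, and it is the fact already established inside the proof of Lemma~\ref{l4}(a); here I would simply cite it rather than re-derive it.
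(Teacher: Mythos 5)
Your proposal is correct and follows essentially the same route as the paper: the proposition is precisely the deduction the paper makes from Lemma~\ref{l4} together with the earlier observation that $\mathcal{C}^{\perp}$ is closed under $T_{\Lambda'}$, and your two inclusions via the coefficient expansion of $\left(a(x),b(x)\right)$ are exactly that deduction spelled out. One small correction: the adjoint identity you state in passing, $\langle T_{\Lambda'}(d),c\rangle=\langle d,T_{\Lambda}(c)\rangle$, is not right; the relation used in the proof of Lemma~\ref{l4}(a) is $\langle a,T_{\Lambda'}^{j}(b)\rangle=\langle b,T_{\Lambda}^{m-j}(a)\rangle$, which for $j=1$ gives $\langle T_{\Lambda'}(d),c\rangle=\langle d,T_{\Lambda}^{m-1}(c)\rangle$. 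This slip is harmless for your argument, since $\mathcal{C}$ is closed under every power of $T_{\Lambda}$ and so $T_{\Lambda}^{m-1}(c)\in\mathcal{C}$, but the exponent should be corrected if the identity is quoted verbatim.
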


From now on, we shall follow the same notations as in Section \ref{multi-twisted}.   
\section{Self-dual and self-orthogonal $\Lambda$-multi-twisted codes} \label{selfdual}
A $\Lambda$-multi-twisted  code $\mathcal{C}$ of  length $n$ over $\mathbb{F}_{q}$ is said to be (i) self-dual if it satisfies $\mathcal{C}=\mathcal{C}^{\perp}$ and (ii) self-orthogonal if it satisfies $\mathcal{C} \subseteq \mathcal{C}^{\perp}.$ These two classes of  $\Lambda$-multi-twisted codes have nice algebraic structures and are useful in constructing modular forms.  Now we proceed to study the algebraic structures of  self-dual and self-orthogonal $\Lambda$-multi-twisted codes of length $n$ over $\mathbb{F}_{q}.$

To do this, for a non-zero polynomial $f(x)$ over $\mathbb{F}_{q},$ let $f^*(x)$ denote its reciprocal polynomial.  Further, the polynomial $f(x)(\neq 0) \in \mathbb{F}_{q}[x]$ is said to be self-reciprocal if it satisfies $\langle f(x)\rangle=\langle f^*(x) \rangle $  in $\mathbb{F}_{q}[x].$ Two non-zero polynomials $f(x),g(x) \in \mathbb{F}_{q}[x]$ form a reciprocal pair if  they satisfy $\langle g(x) \rangle =\langle f^*(x) \rangle $  in $\mathbb{F}_{q}[x].$ Now we recall that $g_1(x), g_2(x),\cdots, g_r(x)$ are all the distinct irreducible factors of $x^{m_1}-\lambda_1, x^{m_2}-\lambda_2,\cdots, x^{m_{\ell}}-\lambda_{\ell}$ in $\mathbb{F}_{q}[x]$ with  $\text{deg }g_w(x)=d_w$ for $1 \leq w \leq r.$  As $g_w(x)$ is irreducible over $\mathbb{F}_{q},$ we see that $\text{deg }g_w^*(x)=\text{deg }g_w(x)=d_w$ for each $w.$ Further, suppose (by relabelling $g_w(x)$'s if required) that $g_1(x),g_2(x),\cdots, g_s(x)$ are all the distinct self-reciprocal polynomials,  $g_{s+1}(x),g_{s+1}^*(x),\cdots,g_t(x),g_t^*(x)$ are all the polynomials forming reciprocal pairs, and that $g_{t+1}(x),g_{t+2}(x),\cdots, g_{e}(x)$ are the remaining polynomials (that are neither self-reciprocal nor do they form reciprocal pairs),  which   appear in the irreducible factorizations of $x^{m_1}-\lambda_1,x^{m_2}-\lambda_2,\cdots,x^{m_{\ell}}-\lambda_{\ell}$ over $\Fq.$ Here $r=e+t-s.$ Next for $1 \leq u \leq s,$  $s+1 \leq v \leq t$ and $t+1 \leq {p} \leq e,$ we note that $F_u= \frac{\Fq[x]}{\left<g_u(x)\right>} \simeq \mathbb{F}_{q^{d_u}},$ $F_v= \frac{\Fq[x]}{\left<g_v(x)\right>}\simeq \mathbb{F}_{q^{d_v}},$ $F_v'= \frac{\Fq[x]}{\left<g_v^*(x)\right>}\simeq \mathbb{F}_{q^{d_v}},$ $F_{p} =\frac{\mathbb{F}_{q}[x]}{\langle g_{p}(x) \rangle} \simeq \mathbb{F}_{q^{d_{p}}}$ and $F_{p}' =\frac{\mathbb{F}_{q}[x]}{\langle g_{p}^*(x) \rangle} \simeq \mathbb{F}_{q^{d_{p}}}.$
Therefore by Chinese Remainder Theorem, we have 
\\$  V\simeq \Big( \bigoplus\limits_{u=1}^s \bigg( \underbrace{\epsilon_{u,1}F_u, \epsilon_{u,2}F_u, \cdots, \epsilon_{u,\ell}F_u}_{\mathcal{G}_u}\bigg) \Big) 
\oplus \Big( \bigoplus\limits_{v=s+1}^t \bigg\{ \bigg( \underbrace{\epsilon_{v,1}F_v, \epsilon_{v,2}F_v, \cdots, \epsilon_{v,\ell}F_v}_{\mathcal{G}_v}\bigg)\oplus \bigg( \underbrace{\epsilon'_{v,1}F_v', \epsilon'_{v,2}F_v', \cdots, \epsilon'_{v,\ell}F_v'}_{\mathcal{G}_v'} \bigg) \bigg\} \Big)$\\ \small{\begin{equation}\label{51}
 \oplus  \Big( \bigoplus\limits_{p=t+1}^{e} \bigg\{ \bigg( \underbrace{\epsilon_{p,1}F_{p}, \epsilon_{p,2}F_{p}, \cdots, \epsilon_{p,\ell}F_{p}}_{\mathcal{G}_{p}}\bigg)\end{equation}}\normalsize
and \\ $  V'\simeq \Big( \bigoplus\limits_{u=1}^s \bigg( \underbrace{\epsilon_{u,1}F_u, \epsilon_{u,2}F_u, \cdots, \epsilon_{u,\ell}F_u}_{\mathcal{G}_u}\bigg) \Big) 
\oplus \Big( \bigoplus\limits_{v=s+1}^t \bigg\{ \bigg( \underbrace{\epsilon'_{v,1}F_v, \epsilon'_{v,2}F_v, \cdots, \epsilon'_{v,\ell}F_v}_{\mathcal{H}_v}\bigg)\oplus \bigg( \underbrace{\epsilon_{v,1}F_v', \epsilon_{v,2}F_v', \cdots, \epsilon_{v,\ell}F_v'}_{\mathcal{H}_v'} \bigg) \bigg\} \Big)$\\ \small{\begin{equation}\label{51}
 \oplus  \Big( \bigoplus\limits_{p=t+1}^{e} \bigg\{ \bigg( \underbrace{\epsilon_{p,1}F_{p}', \epsilon_{p,2}F_{p}', \cdots, \epsilon_{p,\ell}F_{p}'}_{\mathcal{G}_{p}'}\bigg),\end{equation}}\normalsize
where for $1 \leq {\alpha} \leq e,$  $s+1\leq v \leq t$ and $1 \leq i \leq \ell,$ \begin{equation*}\epsilon_{\alpha,i}=\left\{\begin{array}{ll} 1 & \text{if }g_\alpha(x)  \text{ divides }x^{m_i}-\lambda_i \text{ in }\mathbb{F}_{q}[x];\\ 0 & \text{otherwise} \end{array}\right. \text{ and  }\epsilon'_{v,i}=\left\{\begin{array}{ll} 1 & \text{if }g_v^*(x)  \text{ divides }x^{m_i}-\lambda_i \text{ in }\mathbb{F}_{q}[x];\\ 0 & \text{otherwise.} \end{array}\right.\end{equation*}
Note that $\text{dim}_{F'_v}\mathcal{H}_{v}'=\epsilon_v$ for each $v.$
Further, if $\epsilon'_v=\sum\limits_{i=1}^{\ell}\epsilon'_{v,i},$ then  $\text{dim}_{F_v'}\mathcal{G}_{v}'=\text{dim}_{F_v}\mathcal{H}_{v}=\epsilon'_v$ for each $v.$ We also recall that $\text{dim}_{F_{\alpha}}\mathcal{G}_{\alpha}=\epsilon_{\alpha} =\sum\limits_{i=1}^{\ell}\epsilon_{\alpha,i}$ for $1 \leq {\alpha} \leq e.$
In view of this, from now on, we shall identify each element ${a}(x)=(a_1(x),a_2(x),\cdots,a_{\ell}(x)) \in V$ as  $A=(A_1,A_2,\cdots,{A_s},{A_{s+1}},{A_{s+1}'}, \cdots,{A_t},{A_t'}, A_{t+1},\cdots, A_e),$ where ${A_u}=(A_{u,1},A_{u,2},\cdots,A_{u, \ell}) \in \mathcal{G}_{u},$  ${A_v}=(A_{v,1},A_{v,2},
\cdots,A_{v,\ell})\in \mathcal{G}_{v},$ ${A_v'}=(A_{v,1}',A_{v,2}',\cdots,A_{v,\ell}')\in \mathcal{G}_{v}'$ and $A_e=(A_{e,1},A_{e,2},\cdots,A_{e,\ell}) \in \mathcal{G}_{e}$  with $A_{u,i}:=\epsilon_{u,i}a_i(x)+\left<g_u(x)\right>,$ $A_{v,i}:=\epsilon_{v,i}a_i(x)+\left<g_v(x)\right>,$  $A_{v,i}':=\epsilon'_{v,i}a_i(x)+\left<g_v^*(x)\right>$ and $A_{p,i}=\epsilon_{p,i}a_i(x)+\left<g_{p}(x)\right>$  for $1 \leq i \leq \ell,$  $1 \leq u \leq s,$  $s+1 \leq v \leq t$ and $t+1 \leq {p} \leq e.$ Analogously, we shall identify each element $b(x)=(b_1(x),b_2(x),\cdots,b_{\ell}(x)) \in V'$ as $B=(B_1,{B_2},\cdots,{B_s},{B_{s+1}},{B'_{s+1}}, \cdots,{B_t},{B'_t}, {B_{t+1}}, \cdots, {B_e}),$ where ${B_u}=({B_{u,1}},{B_{u,2}},\cdots,{B_{u, \ell}}) \in \mathcal{G}_{u},$  ${B_v}=(B_{v,1},{B_{v,2}},\cdots,{B_{v,\ell}})\in \mathcal{H}_{v},$ ${B_v'}=({B_{v,1}'},{B_{v,2}'},\cdots,{B_{v,\ell}'})\in \mathcal{H}'_{v}$ and ${B_{p}}=({B_{p,1}},{B_{p,2}},\cdots,{B_{p,\ell}}) \in \mathcal{G}'_{p}$  with ${B_{u,i}}:=\epsilon_{u,i}{b_i(x)}+\left<g_u(x)\right>,$ ${B_{v,i}}:=\epsilon'_{v,i}{b_i(x)}+\left<g_v(x)\right>,$  ${B_{v,i}'}:=\epsilon_{v,i}{b_i(x)}+\left<g_v^*(x)\right>$  and ${B_{p,i}}=\epsilon_{p,i}{b_i(x)}+\left<g_{p}^*(x)\right>$ for each $i,u,v$ and $p.$ 
    For  $1 \leq u \leq s,$ let $\overline{{ }^{\hspace{2mm} }}: \epsilon_{u,i}F_{u} \rightarrow \epsilon_{u,i}F_{u}$ be the conjugation map, defined as \begin{equation*}\overline{h_{u}(x)}=\left\{\begin{array}{cl} h_{u}(x^{-1})=h_{u}(\lambda_{i}^{-1} x^{m_{i}-1}) & \text{if }\epsilon_{u,i}=1;\\ 0 & \text{if } \epsilon_{u,i}=0\end{array}\right.\end{equation*}for all $h_{u}(x) \in \epsilon_{u,i} F_{u}.$
    For  $s+1 \leq v \leq t,$ the conjugation map $\overline{{ }^{\hspace{2mm} }}: \epsilon_{v,i}'F_{v} \rightarrow \epsilon_{v,i}'F_{v}'$ is defined as \begin{equation*}\overline{h_{v}(x)}=\left\{\begin{array}{cl} h_{v}(x^{-1})=h_{v}(\lambda_{i}^{-1} x^{m_{i}-1}) & \text{if }\epsilon_{v,i}'=1;\\ 0 & \text{if } \epsilon_{v,i}'=0\end{array}\right.\end{equation*}    
 for all $h_{v}(x) \in \epsilon_{v,i}'F_{v},$ while the conjugation map $\overline{{ }^{\hspace{2mm} }}: \epsilon_{v,i}F_{v}' \rightarrow \epsilon_{v,i}F_{v}$ is defined as \begin{equation*}\overline{\hat{h}_{v}(x)}=\left\{\begin{array}{cl} \hat{h}_{v}(x^{-1})=\hat{h}_{v}(\lambda_{i}^{-1} x^{m_{i}-1}) & \text{if }\epsilon_{v,i}=1;\\ 0 & \text{if } \epsilon_{v,i}=0\end{array}\right.\end{equation*}    
 for all $\hat{h}_{v}(x) \in \epsilon_{v,i}F_{v}'.$ For  $t+1 \leq {p} \leq e,$ the conjugation map $\overline{{ }^{\hspace{2mm} }}: \epsilon_{p,i}F_{p}' \rightarrow \epsilon_{p,i}F_{p}$ is defined as \begin{equation*}\overline{h_{p}(x)}=\left\{\begin{array}{cl} h_{p}(x^{-1})=h_{p}(\lambda_{i}^{-1} x^{m_{i}-1}) & \text{if }\epsilon_{p,i}=1;\\ 0 & \text{if } \epsilon_{p,i}=0\end{array}\right.\end{equation*}
 for all $h_{p}(x) \in \epsilon_{p,i}F_{p}'.$  For $1 \leq i \leq \ell$ and $1 \leq u \leq s$ satisfying $\epsilon_{u,i}=1,$ we observe that the conjugation map $\overline{{ }^{\hspace{2mm} }}$ is the identity map when  $d_u=1,$ while it is an automorphism of $F_u$ when $d_u >1.$
From this, we see that for each $b(x)=(b_1(x),b_2(x),\cdots,b_{\ell}(x)) \in V',$ $\overline{b(x)}\in V$ is given by $(\overline{B_1},\overline{B_2},\cdots,\overline{B_s},\overline{B_{s+1}'},\overline{B_{s+1}}, \cdots,\overline{B_t'},\overline{B_t}, \overline{B_{t+1}}, \cdots, \overline{B_e}),$ where $\overline{B_u}=(\overline{B_{u,1}},\overline{B_{u,2}},\cdots,\overline{B_{u, \ell}}) \in \mathcal{G}_{u},$  $\overline{B_v}=(\overline{B_{v,1}},\overline{B_{v,2}},\cdots,\overline{B_{v,\ell}})\in \mathcal{G}_{v}',$ $\overline{B_v'}=(\overline{B_{v,1}'},\overline{B_{v,2}'},\cdots,\overline{B_{v,\ell}'})\in \mathcal{G}_{v}$ and $\overline{B_{p}}=(\overline{B_{p,1}},\overline{B_{p,2}},\cdots,\overline{B_{p,\ell}}) \in \mathcal{G}_{p}$  with $\overline{B_{u,i}}=\epsilon_{u,i}\overline{b_i(x)}+\left<g_u(x)\right>,$ $\overline{B_{v,i}}=\epsilon'_{v,i}\overline{b_i(x)}+\left<g_v^*(x)\right>,$  $\overline{B_{v,i}'}=\epsilon_{v,i}\overline{b_i(x)}+\left<g_v(x)\right>$  and $\overline{B_{p,i}}=\epsilon_{p,i}\overline{b_i(x)}+\left<g_{p}(x)\right>$ for each $i,u,v$ and $p.$

 In view of this,  a $\Lambda$-multi-twisted  code $\mathcal{C}$ of length $n$  over $\mathbb{F}_{q}$ can be uniquely expressed as 
\vspace{-2mm}\begin{equation}\label{a1}\mathcal{C}= \big( \bigoplus\limits_{u=1}^{s} \mathcal{C}_u \big) \oplus \big(\bigoplus\limits_{v=s+1}^{t} \big(\mathcal{C}_v \oplus \mathcal{C}_v' \big)\big) \oplus  \big( \bigoplus\limits_{p=t+1}^{e} \mathcal{C}_{p} \big),\vspace{-2mm}\end{equation}
where $\mathcal{C}_u$ (resp. $\mathcal{C}_v,$ $\mathcal{C}_v'$ and $\mathcal{C}_{p}$) is a subspace of $\mathcal{G}_{u}$ (resp. $\mathcal{G}_v,$  $\mathcal{G}_v'$ and $\mathcal{G}_{p}$) over the field $F_u$ (resp. $F_v,$ $F_v'$ and $F_{p}$) for each $u$ (resp. $v$ and $p$). 
To study their dual codes, we see that if for some $\alpha$ and $i,$ $\epsilon_{\alpha,i}=1,$ then $x^{m_i}=\lambda_i$ in $F_{\alpha},$ which implies that $\lambda_i(x^m-1)/(x^{m_i}-\lambda_i)=m/m_i$ in $F_{\alpha}.$  In view of the above,  the sesquilinear form   corresponding to $\left(\cdot,\cdot\right)$ is a map $\left[ \cdot,\cdot \right]$ from $\left\{(\bigoplus\limits_{u=1}^{s} \mathcal{G}_{u}) \oplus \big( \bigoplus\limits_{v=s+1}^{t}\mathcal{G}_{v}\oplus \mathcal{G}'_{v}\big) \oplus \big(\bigoplus\limits_{p=t+1}^{e} \mathcal{G}_{p}\big) \right\}\times \left\{(\bigoplus\limits_{u=1}^{s} \mathcal{G}_{u}) \oplus \big( \bigoplus\limits_{v=s+1}^{t}\mathcal{H}_{v}\oplus \mathcal{H}'_{v}\big) \oplus \big(\bigoplus\limits_{p=t+1}^{e} \mathcal{G}'_{p}\big)\right\}$ into $ (\bigoplus\limits_{u=1}^{s} F_{u}) \oplus \big( \bigoplus\limits_{v=s+1}^{t}F_{v}\oplus F'_{v}\big) \oplus \big(\bigoplus\limits_{p=t+1}^{e} F_{p}\big),$  which is defined as
   \small{\begin{equation*} \left[A,B\right]=\bigg(\sum_{i=1}^{\ell}\frac{m}{m_i}\epsilon_{1,i} A_{1,i}\overline{B_{1,i}},\sum_{i=1}^{\ell}\frac{m}{m_i}\epsilon_{2,i} A_{2,i}\overline{B_{2,i}},\cdots,\sum_{i=1}^{\ell}\frac{m}{m_i}\epsilon_{s,i} A_{s,i}\overline{B_{s,i}},\sum_{i=1}^{\ell}\frac{m}{m_i}\epsilon_{s+1,i} A_{s+1,i}\overline{B_{s+1,i}'},
\sum_{i=1}^{\ell}\frac{m}{m_i}\epsilon'_{s+1,i} A_{s+1,i}'\overline{B_{s+1,i}}, \end{equation*} \begin{equation}\label{dbform} \cdots, \sum_{i=1}^{\ell}\frac{m}{m_i}\epsilon_{t,i} A_{t,i}\overline{B_{t,i}'},\sum_{i=1}^{\ell} \frac{m}{m_i}\epsilon'_{t,i}A_{t,i}'\overline{B_{t,i}},\sum_{i=1}^{\ell}\frac{m}{m_i}\epsilon_{t+1,i} A_{t+1,i}\overline{B_{t+1,i}},\sum_{i=1}^{\ell}\frac{m}{m_i}\epsilon_{t+2,i} A_{t+2,i}\overline{B_{t+2,i}},\cdots,\sum_{i=1}^{\ell} \frac{m}{m_i}\epsilon_{e,i}A_{e,i}\overline{B_{e,i}}\bigg) \end{equation} }\normalsize for each $A \in V$ and $B \in V'.$  Furthermore, with respect to the sesquilinear form defined by \eqref{dbform},  it is easy to see that the dual code $\mathcal{C}^{\perp}$ of $\mathcal{C}$  is given by  
\begin{equation}\label{decomp}
\mathcal{C}^{\perp} = \big( \bigoplus\limits_{u=1}^{s} \mathcal{C}_u^{\perp} \big) \oplus \big( \bigoplus\limits_{v=s+1}^{t} ( {\mathcal{C}_v'}^{\perp} \oplus \mathcal{C}_v^{\perp}) \big) \oplus  \big( \bigoplus\limits_{p=t+1}^{e} \mathcal{C}_p^{\perp} \big),\vspace{-1mm}\end{equation} 
where $\mathcal{C}_u^{\perp}(\subseteq \mathcal{G}_u)$ is the orthogonal complement of $\mathcal{C}_u$ with respect to $\left[\cdot,\cdot \right]\restriction_{\mathcal{G}_{u} \times \mathcal{G}_{u}}$ for $1 \leq u \leq s;$  $\mathcal{C}_v^{\perp}(\subseteq \mathcal{H}_{v}')$ is the orthogonal complement of $\mathcal{C}_v$ with respect to $\left[\cdot,\cdot \right]\restriction_{\mathcal{H}_{v}' \times \mathcal{G}_{v}},$ $\mathcal{C}_v'^{\perp}(\subseteq \mathcal{H}_{v})$ is the orthogonal complement of  $\mathcal{C}_v^{'}$ with respect to $\left[\cdot,\cdot \right]\restriction_{\mathcal{H}_{v} \times \mathcal{G}_{v}'}$ for $s+1 \leq v \leq t;$ and $\mathcal{C}_{p}^{\perp}(\subseteq \mathcal{G}_{p}')$ is the orthogonal complement of $\mathcal{C}_{p}$ with respect to $\left[\cdot,\cdot\right]\restriction_{\mathcal{G}_{p}'\times \mathcal{G}_{p}}$ for $t+1 \leq {p} \leq e.$ Here $[\cdot,\cdot]\restriction_{\mathcal{G}_u \times \mathcal{G}_u}$ (resp. $[\cdot,\cdot]\restriction_{\mathcal{H}_v' \times \mathcal{G}_v},$  $[\cdot,\cdot]\restriction_{\mathcal{H}_v \times \mathcal{G}_v'})$ and $\left[\cdot,\cdot \right]\restriction_{\mathcal{G}_{p}'\times \mathcal{G}_{p}}$)  is the restriction of the sesquilinear form $[\cdot,\cdot]$ (defined by \eqref{dbform}) to $\mathcal{G}_u \times \mathcal{G}_u$ (resp. $\mathcal{H}_v' \times \mathcal{G}_v,$ $\mathcal{H}_v \times \mathcal{G}_v'$ and $\mathcal{G}_{p}' \times \mathcal{G}_{p}$) for each $u$ (resp. $v$ and $p$).

To count all self-dual and self-orthogonal $\Lambda$-multi-twisted codes, for $s+1 \leq v \leq t,$  let $\mathcal{K}_{v}=\mathcal{G}_{v}\cap \mathcal{H}_{v},$  $\mathcal{K}'_{v}=\mathcal{G}'_{v}\cap \mathcal{H}'_{v},$ and let $\tau_v$ denote the number of integers $i$ satisfying $1 \leq i \leq \ell$ and $\epsilon_{v,i}=\epsilon'_{v,i}.$ Note that $\tau_v=\sum\limits_{i=1}^{\ell}\epsilon_{v,i} \epsilon'_{v,i}$ for each $v.$ One can easily observe that $\text{dim}_{F_v} \mathcal{K}_{v}=\text{dim}_{F'_v} \mathcal{K}'_{v}=\tau_v$ for each $v.$ Then in the following proposition, we characterize all self-dual and self-orthogonal $\Lambda$-multi-twisted codes of length $n$ over $\mathbb{F}_{q}.$

{\prop\label{lem51} Let $\Lambda=(\lambda_1,\lambda_2,\cdots,\lambda_{\ell})$ be fixed. Let  $\mathcal{C}= \big( \bigoplus\limits_{u=1}^{s} \mathcal{C}_u \big) \oplus \big( \bigoplus\limits_{v=s+1}^{t} (\mathcal{C}_v \oplus \mathcal{C}_v' )\big) \oplus \big( \bigoplus\limits_{p=t+1}^{e} \mathcal{C}_{p} \big)$ be a  $\Lambda$-multi-twisted  code of length $n$ over $\mathbb{F}_{q},$ where $\mathcal{C}_u$ (resp. $\mathcal{C}_v,$ $\mathcal{C}_v'$ and $\mathcal{C}_{p}$) is a subspace of $\mathcal{G}_{u}$ (resp. $\mathcal{G}_v,$ $\mathcal{G}_v'$ and $\mathcal{G}_{p}$) over $F_u$ (resp. $F_v,$  $F_v'$ and $F_p$) for each $u$ ($v$ and $p$). Then 
\begin{enumerate}\item[(a)] the code $\mathcal{C}$ is self-dual if and only if all the irreducible factors  of the polynomials $x^{m_1}-\lambda_1,$ $x^{m_2}-\lambda_2, \cdots x^{m_{\ell}}-\lambda_{\ell}$ in $\mathbb{F}_{q}[x]$  are either self-reciprocal or form reciprocal pairs (i.e., $e \leq t$), $\mathcal{C}_u=\mathcal{C}_u^{\perp},$ $\tau_v \geq 1,$ $\mathcal{C}_{v}$ (resp. $\mathcal{C}'_{v}$)  is a subspace of $\mathcal{K}_{v}$ (resp. $\mathcal{K}_{v}'$) satisfying $\mathcal{C}_v'=\mathcal{C}_v^{\perp}\cap \mathcal{K}'_{v}$ for $1 \leq u \leq s$ and $s+1 \leq v \leq t.$ As a consequence, when  all the irreducible factors  of the polynomials $x^{m_1}-\lambda_1,$ $x^{m_2}-\lambda_2, \cdots x^{m_{\ell}}-\lambda_{\ell}$ in $\mathbb{F}_{q}[x]$  are either self-reciprocal or form reciprocal pairs (i.e., $e \leq t$) and $\tau_v \geq 1$ for $s+1 \leq v \leq t,$  the total number of distinct self-dual $\Lambda$-multi-twisted  codes of  length $n$ over $\mathbb{F}_{q}$ is given by 
$\mathfrak{N}_0=\prod\limits_{u=1}^{s} \mathfrak{D}_u \prod\limits_{v=s+1}^{t} \mathfrak{D}_v,$ where $\mathfrak{D}_u$ equals the number of  distinct  $F_u$-subspaces $\mathcal{C}_u$ of $\mathcal{G}_u$ satisfying $\mathcal{C}_u=\mathcal{C}_u^{\perp}$ for  $1\leq u\leq s$ and $\mathfrak{D}_v$ equals the number of distinct $F_v$-subspaces of $\mathcal{K}_v$ for  $s+1 \leq v \leq t.$
\item[(b)] the code $\mathcal{C}$ is self-orthogonal if and only if $\mathcal{C}_u\subseteq \mathcal{C}_u^{\perp},$ $\mathcal{C}_{v}$ (resp. $\mathcal{C}_{v}'$) is a subspace of $\mathcal{K}_{v}$ (resp. $\mathcal{K}'_{v}$) satisfying $\mathcal{C}_v'\subseteq \mathcal{C}_v^{\perp} \cap \mathcal{K}'_{v}$ and $\mathcal{C}_{p}=\{0\}$ for $1 \leq u \leq s,$  $s+1 \leq v \leq t$ and $t+1 \leq {p} \leq e.$ As a consequence, the total number of distinct self-orthogonal $\Lambda$-multi-twisted  codes of length $n$ over $\mathbb{F}_{q}$ is given by $\mathfrak{N}_1=\prod\limits_{u=1}^{s} \mathfrak{E}_u \prod\limits_{v=s+1}^{t} \mathfrak{E}_v,$ where $\mathfrak{E}_u$ equals the number of distinct self-orthogonal $F_u$-subspaces of $\mathcal{G}_u$ for $1 \leq u \leq s$ and $\mathfrak{E}_v$ equals the number of  pairs $(\mathcal{C}_v,\mathcal{C}_v')$ with $\mathcal{C}_v$ (resp. $\mathcal{C}_v'$) as a subspace of $\mathcal{K}_v$ (resp. $\mathcal{K}_v'$) over $F_v$ (resp. $F_v'$)  satisfying $\mathcal{C}_v' \subseteq \mathcal{C}_v^{\perp} \cap \mathcal{K}'_{v}$ for $s+1 \leq v \leq t.$
\end{enumerate}}
\begin{proof} \begin{enumerate}\item[(a)] In view of \eqref{a1} and \eqref{decomp}, we see that the code $\mathcal{C}$ is self-dual if and only if the set $\{g_{t+1}(x), g_{t+2}(x),\cdots,\\g_e(x)\}$ is empty, $\mathcal{C}_u=\mathcal{C}_u^{\perp},$   $\tau_v \geq 1,$ $ \mathcal{C}_v$ is a subspace of $\mathcal{K}_{v}$ and $\mathcal{C}'_{v}$ is a subspace of $\mathcal{K}'_{v}$ satisfying $\mathcal{C}_{v} = \mathcal{C}_v'^{\perp} \cap \mathcal{K}_{v}$ and $ \mathcal{C}_v' = \mathcal{C}_v^{\perp}\cap \mathcal{K}'_{v}$ for each $u$ and $v.$ Further,  for  $s+1 \leq v \leq t,$ if $\tau_v \geq 1,$ $\mathcal{C}_{v}$ is a subspace of $\mathcal{K}_{v}$ and $\mathcal{C}'_{v}$ is a subspace of $\mathcal{K}'_{v},$ then we observe that $\mathcal{C}_{v} = \mathcal{C}_v'^{\perp} \cap \mathcal{K}_{v}$ and $ \mathcal{C}_v' = \mathcal{C}_v^{\perp}\cap \mathcal{K}'_{v}$ hold if and only if $\mathcal{C}'_{v} = \mathcal{C}_v^{\perp} \cap \mathcal{K}'_{v}$ holds. From this,   part (a) follows immediately. 
\item[(b)]  By \eqref{a1} and \eqref{decomp}, we see that the code $\mathcal{C}$ is self-orthogonal if and only if  $\mathcal{C}_u \subseteq \mathcal{C}_u^{\perp},$  $\mathcal{C}_{v}$ (resp. $\mathcal{C}_{v}'$) is a subspace of $\mathcal{K}_{v}$ (resp. $\mathcal{K}'_{v}$) satisfying $\mathcal{C}_v'\subseteq \mathcal{C}_v^{\perp} \cap \mathcal{K}'_{v}$ and    $\mathcal{C}_{v} \subseteq \mathcal{C}_{v}'^{\perp} \cap \mathcal{K}_{v},$ and $\mathcal{C}_{p} \subseteq \{0\},$ $\{0\} \subseteq \mathcal{C}_{p}^{\perp}$ for each $u,v$ and $p.$ Further, for $s+1 \leq v \leq t,$ we see that if $\mathcal{C}_{v}$ (resp. $\mathcal{C}_{v}'$) is a subspace of $\mathcal{K}_{v}$ (resp. $\mathcal{K}'_{v}$), then  $\mathcal{C}_v'\subseteq \mathcal{C}_v^{\perp} \cap \mathcal{K}'_{v}$ and    $\mathcal{C}_{v} \subseteq \mathcal{C}_{v}'^{\perp} \cap \mathcal{K}_{v}$ hold if and only if $\mathcal{C}_v'\subseteq \mathcal{C}_v^{\perp} \cap \mathcal{K}'_{v}$ holds. From this, part (b) follows.
\end{enumerate}
\vspace{-5mm}\end{proof}
Next let  us define $\mathcal{I}_1=\{u: 1 \leq u \leq s, d_u=1\}$ and $ \mathcal{I}_2=\{u: 1 \leq u \leq s, d_u >1 \}.$ We make the following observation.
{\lem\label{52} For $1 \leq u \leq s,$ let $\left[\cdot, \cdot\right]\restriction_{\mathcal{G}_{u} \times \mathcal{G}_{u}}$ denote the restriction of the sesquilinear form $\left[\cdot,\cdot\right]$ to $\mathcal{G}_{u}\times \mathcal{G}_{u}.$ Then the following hold.
\begin{enumerate}\item[(a)] When $u \in \mathcal{I}_{1},$ $\left[\cdot, \cdot\right]\restriction_{\mathcal{G}_{u} \times \mathcal{G}_{u}}$ is a non-degenerate, reflexive and symmetric bilinear form on $\mathcal{G}_{u},$ i.e., \\ $\left(\mathcal{G}_{u},  \left[\cdot, \cdot\right]\restriction_{\mathcal{G}_{u} \times \mathcal{G}_{u}}\right)$ is an orthogonal space of dimension $\epsilon_u$ over $F_u \simeq \mathbb{F}_{q}.$ \item[(b)] When $u \in \mathcal{I}_{2},$ $\left[\cdot, \cdot\right]\restriction_{\mathcal{G}_{u} \times \mathcal{G}_{u}}$ is a non-degenerate, reflexive and Hermitian  form on $\mathcal{G}_{u},$ i.e., $\left(\mathcal{G}_{u},  \left[\cdot, \cdot\right]\restriction_{\mathcal{G}_{u} \times \mathcal{G}_{u}}\right)$ is a unitary space of dimension $\epsilon_u$ over $F_u \simeq \mathbb{F}_{q^{d_u}}.$  \end{enumerate} }
\begin{proof} Proof is trivial.
\end{proof}
Now we proceed to count all self-dual and self-orthogonal $\Lambda$-multi-twisted codes of length $n$ over $\mathbb{F}_{q}.$
\subsection{Enumeration formula for self-dual $\Lambda$-multi-twisted codes} 
 In the following theorem, we derive necessary and sufficient conditions for the existence of a self-dual $\Lambda$-multi-twisted code of length $n$  over $\mathbb{F}_{q}.$
 We also provide enumeration formula for this special class of multi-twisted codes.
\noindent\begin{thm}\label{enum1}Let $\Lambda=(\lambda_1,\lambda_2,\cdots,\lambda_{\ell})$ be fixed. For $s+1 \leq v \leq t,$   let $\tau_v$ denote the number of integers $i$ satisfying $1 \leq i \leq \ell$ and $\epsilon_{v,i}=\epsilon'_{v,i}.$ \begin{enumerate}  \item[(a)] There exists a self-dual  $\Lambda$-multi-twisted  code of  length $n$ over $\mathbb{F}_{q}$ if and only if  all the irreducible factors  of the polynomials $x^{m_1}-\lambda_1,$ $x^{m_2}-\lambda_2, \cdots x^{m_{\ell}}-\lambda_{\ell}$ in $\mathbb{F}_{q}[x]$  are either self-reciprocal or form reciprocal pairs (i.e., $e \leq t$), $\tau_v \geq 1$ for $s+ 1\leq v \leq t,$ $\epsilon_u$ is even for $1 \leq u \leq s$ and $(-1)^{\epsilon_u/2}$ is a square in $\mathbb{F}_{q}$ for all $u \in \mathcal{I}_{1}.$ 
 \item[(b)] When all the irreducible factors  of the polynomials $x^{m_1}-\lambda_1,$ $x^{m_2}-\lambda_2, \cdots x^{m_{\ell}}-\lambda_{\ell}$ in $\mathbb{F}_{q}[x]$  are either self-reciprocal or form reciprocal pairs (i.e., $e \leq t$), $\tau_v \geq 1$ for $s+ 1\leq v \leq t,$ $\epsilon_u$ is even for $1 \leq u \leq s$ and $(-1)^{\epsilon_u/2}$ is a square in $\mathbb{F}_{q}$ for all $u \in \mathcal{I}_{1},$ the number $\mathfrak{N}_{0}$ of distinct self-dual $\Lambda$-multi-twisted  codes of length $n$ over $\mathbb{F}_{q}$ is given by
$\mathfrak{N}_{0}=\prod\limits_{u=1}^s \mathfrak{D}_u \prod\limits_{v=s+1}^t \left( \sum\limits_{b=0}^{\tau_v} {\tau_v \brack b}_{q^{d_v}}\right),$ where  
$$\mathfrak{D}_u= \left\{ \begin{array}{ll}
\prod\limits_{a=0}^{\epsilon_u/2 -1} \big(q^a+1\big) & \text{if }u\in \mathcal{I}_1 \text{ \&\ }q \text{ is odd;} \\ 
\prod\limits_{a=1}^{\epsilon_u/2 -1} \big( q^{a}+1\big) & \text{if }u\in \mathcal{I}_1 \text{ \& }q \text{ is even;}\\
\prod\limits_{a=0}^{\epsilon_u/2-1} \big( q^{(2a+1)d_u/2} +1 \big) & \text{if }u \in\mathcal{I}_2. 
 \end{array}\right. $$
\end{enumerate}
\end{thm}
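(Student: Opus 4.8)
The plan is to derive everything from Proposition \ref{lem51}(a), which already reduces the enumeration of self-dual $\Lambda$-multi-twisted codes to a product of local counting problems over the constituents. Recall that by that proposition, a self-dual code exists precisely when $e \leq t$ (every irreducible factor is self-reciprocal or in a reciprocal pair), $\tau_v \geq 1$ for $s+1 \leq v \leq t$, each $\mathcal{C}_u$ satisfies $\mathcal{C}_u = \mathcal{C}_u^{\perp}$, and $\mathcal{C}_v$ ranges over subspaces of $\mathcal{K}_v$ with $\mathcal{C}_v'$ then determined. So the whole theorem splits into: (i) the two ``$u$-type'' constituents $1 \leq u \leq s$, where by Lemma \ref{52} the form $[\cdot,\cdot]\restriction_{\mathcal{G}_u \times \mathcal{G}_u}$ is a non-degenerate symmetric bilinear form when $u \in \mathcal{I}_1$ (orthogonal space over $\mathbb{F}_q$, since $d_u = 1$) and a non-degenerate Hermitian form when $u \in \mathcal{I}_2$ (unitary space over $\mathbb{F}_{q^{d_u}}$); and (ii) the ``$v$-type'' constituents $s+1 \leq v \leq t$, where a self-dual component corresponds to choosing \emph{any} $F_v$-subspace $\mathcal{C}_v$ of the $\tau_v$-dimensional space $\mathcal{K}_v$ (the partner $\mathcal{C}_v'$ is forced). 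This last count is immediately $\sum_{b=0}^{\tau_v} {\tau_v \brack b}_{q^{d_v}}$ by the definition of the $q$-binomial coefficient, giving the $v$-factor in $\mathfrak{N}_0$ verbatim.

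For part (a), the existence statement, the necessary-and-sufficient conditions $e \leq t$ and $\tau_v \geq 1$ come directly from Proposition \ref{lem51}(a). What remains is to characterize when each $\mathcal{G}_u$ admits a self-dual subspace. For $u \in \mathcal{I}_2$, the unitary space $(\mathcal{G}_u, [\cdot,\cdot])$ of dimension $\epsilon_u$ over $\mathbb{F}_{q^{d_u}}$: a self-dual subspace has dimension $\epsilon_u/2$, which by Theorem \ref{unitary}(a) is achievable (as a maximal totally isotropic subspace) exactly when $\epsilon_u$ is even — there is no further field-theoretic obstruction for unitary forms. For $u \in \mathcal{I}_1$, the orthogonal space over $\mathbb{F}_q$: by Theorem \ref{9}(a), a self-dual subspace of even length $\epsilon_u$ exists iff $(-1)^{\epsilon_u/2}$ is a square in $\mathbb{F}_q$. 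Assembling these conditions across all $u$ and $v$ yields the claimed equivalence. One must also note that $\tau_v \geq 1$ already forces the existence of a (zero-dimensional) self-dual component on each $v$-pair, so no extra condition is needed there.

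For part (b), under the stated hypotheses I count $\mathfrak{D}_u$, the number of self-dual $F_u$-subspaces of $\mathcal{G}_u$. For $u \in \mathcal{I}_2$: in a unitary space of dimension $\epsilon_u$ over $\mathbb{F}_{q^{d_u}}$ (so the base field used in Theorem \ref{unitary} is $\mathbb{F}_{Q^2}$ with $Q^2 = q^{d_u}$, i.e.\ $Q = q^{d_u/2}$), the number of maximal totally isotropic ($=$ self-dual) subspaces is obtained by specializing Theorem \ref{unitary}(b) at $k = \nu = \epsilon_u/2$, $\mu = \epsilon_u$; after simplification the product $\prod_{a=\mu+1-2k}^{\mu}(q^a - (-1)^a) / \prod_{j=1}^{k}(q^{2j}-1)$ telescopes — here $\mu + 1 - 2k = 1$, so the numerator is $\prod_{a=1}^{\epsilon_u}(q_0^a - (-1)^a)$ with $q_0 = q^{d_u/2}$ — collapsing to $\prod_{a=0}^{\epsilon_u/2 - 1}(q^{(2a+1)d_u/2} + 1)$. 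For $u \in \mathcal{I}_1$: the number of self-dual subspaces of an orthogonal space of even length $\epsilon_u$ over $\mathbb{F}_q$ with $(-1)^{\epsilon_u/2}$ a square is given by Theorem \ref{9}(b), namely $\prod_{a=1}^{\epsilon_u/2 - 1}(q^a+1)$ when $q$ is even and $\prod_{a=0}^{\epsilon_u/2 - 1}(q^a+1)$ when $q$ is odd. Multiplying the $\mathfrak{D}_u$ over $1 \leq u \leq s$ and the $v$-factors over $s+1 \leq v \leq t$ gives $\mathfrak{N}_0$.

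The routine but genuinely fiddly step — and the one I'd expect to be the main obstacle — is the algebraic simplification of Theorem \ref{unitary}(b) at $k = \nu$ into the clean product $\prod_{a=0}^{\epsilon_u/2-1}(q^{(2a+1)d_u/2}+1)$: one has to be careful that the ``$q$'' of Theorem \ref{unitary} is really $q^{d_u/2}$ (since the unitary space lives over $\mathbb{F}_{(q^{d_u/2})^2} = \mathbb{F}_{q^{d_u}}$), pair up the factors $q_0^{2j}-1 = (q_0^j-1)(q_0^j+1)$ in the denominator against the even-index factors $q_0^{2j} - 1$ in the numerator, and check that the surviving odd-index factors are exactly $q_0^{2a+1}+1 = q^{(2a+1)d_u/2}+1$ for $a = 0, 1, \ldots, \epsilon_u/2 - 1$. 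Everything else is a direct citation of Proposition \ref{lem51}, Lemma \ref{52}, and Theorems \ref{9} and \ref{unitary}.
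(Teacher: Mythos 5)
Your proposal is correct and takes essentially the same route as the paper: it reduces the count via Proposition \ref{lem51}(a), uses Lemma \ref{52} together with Theorems \ref{9} and \ref{unitary} for the $u$-blocks (the existence criterion you inline for part (a) is exactly the paper's Lemma \ref{lem53}), and counts the $v$-blocks by the $q$-binomial formula \eqref{sub}. Your explicit telescoping of Theorem \ref{unitary}(b) with base parameter $q^{d_u/2}$ is precisely the simplification the paper leaves implicit, so there is nothing to correct.
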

 In order to prove this theorem, we need to prove the following lemma:
{\lem\label{lem53} Let $1 \leq u \leq s$ be fixed. There exists an $F_u$-subspace $\mathcal{C}_u$ of $\mathcal{G}_u$ satisfying $\mathcal{C}_u=\mathcal{C}_u^{\perp}$ if and only if the following two conditions are satisfied: (i) $\epsilon_u$ is an even integer, and (ii)
 $(-1)^{\epsilon_u / 2}$ is a square in $\Fq$ for all $u \in \mathcal{I}_{1}.$ (Here $\mathcal{C}_u^{\perp}(\subseteq \mathcal{G}_{u})$ is the orthogonal complement of $\mathcal{C}_u$ with respect to $\left[\cdot, \cdot\right]\restriction_{\mathcal{G}_{u}\times \mathcal{G}_{u}}.$)
 \begin{proof} To prove this, we see, by Lemma \ref{52}, that $\left(\mathcal{G}_{u}, \left[\cdot, \cdot\right]\restriction_{\mathcal{G}_{u} \times \mathcal{G}_{u}} \right)$ is an orthogonal space having dimension $\epsilon_u$ over $F_u$ when $u \in \mathcal{I}_{1}$ and that $\left(\mathcal{G}_{u}, \left[\cdot, \cdot\right]\restriction_{\mathcal{G}_{u} \times \mathcal{G}_{u}} \right)$ is a unitary space having dimension $\epsilon_u$ over $F_u$ when $u \in \mathcal{I}_{2}.$ Now if $\mathcal{C}_{u}$ is an $F_u$-subspace of $\mathcal{G}_{u},$ then by Theorem \ref{dimgen}, we see that $\text{dim}_{F_u} \mathcal{C}_{u}^{\perp}=\epsilon_u-\text{dim}_{F_u}\mathcal{C}_{u}.$ Further, if $\mathcal{C}_u$  satisfies $\mathcal{C}_u=\mathcal{C}_u^{\perp},$ then we get $\epsilon_u=2\text{ dim}_{F_u}\mathcal{C}_u,$ which implies that $\epsilon_u$ is an even integer. 

On the other hand, when $u \in \mathcal{I}_{2}$ and $\epsilon_u$ is even, by  Theorem \ref{unitary}(a), we see that the Witt index of $\left(\mathcal{G}_{u}, \left[\cdot, \cdot\right]\restriction_{\mathcal{G}_{u} \times \mathcal{G}_{u}} \right)$ is $\epsilon_u/2,$ so    there exists an $F_u$-subspace $\mathcal{C}_u$ of $\mathcal{G}_u$ satisfying $\mathcal{C}_u=\mathcal{C}_u^{\perp}.$ When $u \in \mathcal{I}_{1}$ and $\epsilon_u$ is even, by Theorem \ref{9}(a), we see that the Witt index of $\left(\mathcal{G}_{u}, \left[\cdot, \cdot\right]\restriction_{\mathcal{G}_{u} \times \mathcal{G}_{u}} \right)$ is $\epsilon_u/2$ if and only if $(-1)^{{\epsilon_u}/{2}}$ is a square in $\mathbb{F}_{q}.$ That is, when $u \in \mathcal{I}_{1}$ and $\epsilon_u$ is even,  there exists an $F_u$-subspace $\mathcal{C}_u$ of $\mathcal{G}_u$ satisfying $\mathcal{C}_u=\mathcal{C}_u^{\perp}$ if and only if $(-1)^{{\epsilon_u}/{2}}$ is a square in $\mathbb{F}_{q}.$ This proves the lemma. 
\end{proof}
}
\noindent\textit{Proof of Theorem \ref{enum1}.} Part (a) follows immediately by  Proposition \ref{lem51}(a) and Lemma \ref{lem53}. To prove (b),  we see, by Proposition \ref{lem51}(a) again, that it is enough to determine the numbers $\mathfrak{D}_{u}$ for all $u \in \mathcal{I}_{1}\cup \mathcal{I}_{2}$ and $\mathfrak{D}_{v}$ for $s+1 \leq v \leq t.$

To do this, we see, by \eqref{sub},  that for each $v~(s+1 \leq v \leq t),$ the number $\mathfrak{D}_v$ of distinct $F_v$-subspaces of $\mathcal{K}_{v}$ equals $\mathfrak{D}_v=N(\tau_v,q^{d_v})=\sum\limits_{b=0}^{\tau_v} {{\tau_v \brack b}_{q^{d_v}}}.$  Moreover, for $u \in \mathcal{I}_1,$ by Lemma \ref{52}(a) and Theorem \ref{9}(b),  we  see that the number $\mathfrak{D}_u$ of distinct  $\epsilon_u/2$-dimensional self-orthogonal subspaces of $\mathcal{G}_u$ over $F_u$  is given by $\mathfrak{D}_u=\prod\limits_{a=0}^{\epsilon_u /2 -1}\big( q^a +1 \big)$ when $q$ is odd, while the number $\mathfrak{D}_u$ of such subspaces is given by $\mathfrak{D}_u=\prod\limits_{a=1}^{\epsilon_u /2 -1}\big( q^a +1 \big)$ when $q$ is even. For $u \in \mathcal{I}_2,$ by Lemma \ref{52}(b) and Theorem \ref{unitary}(b),   we see that the number $\mathfrak{D}_u$ of distinct  $\epsilon_u/2$-dimensional  self-orthogonal subspaces of $\mathcal{G}_u$ over $F_u$ is given by $\mathfrak{D}_u= \prod\limits_{a=0}^{\epsilon_u/2-1} \big( q^{(2a+1)d_u/2} +1 \big).$ From this and using Proposition \ref{lem51}(a) again, part (b) follows immediately. $\hfill \Box$
\subsection{Enumeration formula for self-orthogonal $\Lambda$-multi-twisted codes}
In the following theorem, we enumerate all self-orthogonal $\Lambda$-multi-twisted  codes of length $n$  over $\mathbb{F}_{q}.$ 
{\thm \label{countselfortho} Let $\Lambda=(\lambda_1,\lambda_2,\cdots,\lambda_{\ell})$ be fixed. For $s+1 \leq v \leq t,$   let $\tau_v$ denote the number of integers $i$ satisfying $1 \leq i \leq \ell$ and $\epsilon_{v,i}=\epsilon'_{v,i}.$ 
 The number $\mathfrak{N}_{1}$  of distinct self-orthogonal $\Lambda$-multi-twisted  codes  of length $n$ over $\mathbb{F}_{q}$ is given by
 $$\mathfrak{N}_{1}=\prod\limits_{u =1}^s  \mathfrak{E}_{u}    
\prod\limits_{v=s+1}^t \left( \sum\limits_{k_1=0}^{\tau_v} {\tau_v \brack k_1}_{q^{d_v}} \Big(\sum\limits_{k_2=0}^{\tau_v - k_1} {\tau_v-k_1 \brack k_2}_{q^{d_v}}\Big)\right),$$ where
$$\mathfrak{E}_{u}=\left\{ \begin{array}{ll}
\sum\limits_{k=0}^{\epsilon_{u}/{2}} \left( {\epsilon_u/{2} \brack k}_q \prod\limits_{a=0}^{k-1} \big(q^{\epsilon_u/{2} -a-1} +1\big) \right)  & \text{if }u \in \mathcal{I}_1  \text{ \& either } q \equiv 1(\text{mod 4), } \epsilon_u \text{ is even or } \\  &   \epsilon_u \equiv 0(\text{mod 4), } q \equiv 3~(\text{mod }4); \\

\sum\limits_{k=0}^{(\epsilon_u-2)/{2}} \left( {(\epsilon_u-2)/{2} \brack k}_q \prod\limits_{a=0}^{k-1} \big(q^{\epsilon_u/{2} -a} +1\big) \right) & \text{if } u \in \mathcal{I}_1,~ q \equiv 3(\text{mod 4) \& } \epsilon_u \equiv 2~(\text{mod }4); \\

\sum\limits_{k=0}^{{(\epsilon_u-1)}/{2}} \left( {(\epsilon_u-1)/{2} \brack k}_q \prod\limits_{a=0}^{k-1} \big(q^{{(\epsilon_u-1)}/{2} -a} +1\big) \right) & \text{if } u \in \mathcal{I}_1 \text{ \&  both } q,~\epsilon_u \text{ are odd;} \\

\sum\limits_{k=0}^{(\epsilon_u-1)/{2}} \left( {(\epsilon_u-1)/{2} \brack k}_q \prod\limits_{a=0}^{k-1} \big(q^{(\epsilon_u -2a -1)/{2}}+1\big)\right) & \text{if } u \in \mathcal{I}_1,~q \text{ is even \& }\epsilon_u\text{ is odd;}\\

\sum\limits_{k=0}^{(\epsilon_u-2)/{2}} {(\epsilon_u-2)/{2} \brack k}_q \prod\limits_{a=0}^{k-1}\big(q^{(\epsilon_u-2a-2)/{2}}+1\big)  & \\

 ~~~~ +\sum\limits_{k'=1}^{\epsilon_u/{2}} q^{\epsilon_u-2k'} {(\epsilon_u-2)/{2} \brack k'-1}_q \prod\limits_{a'=0}^{k'-2}\big(q^{({\epsilon_u-2a'-2})/{2}}+1\big) & \text{if }u \in \mathcal{I}_1 \text{ \& both }q,~ \epsilon_u\text{ are even;} \\
 
  \sum\limits_{k=0}^{\epsilon_u/2} \big(\prod\limits_{a=\epsilon_u+1-2k}^{k}(q^{\frac{ad_u}{2}}-(-1)^a)\big)/\big(\prod\limits_{j=1}^{k}(q^{jd_u}-1)\big)  & \text{if }u \in \mathcal{I}_2 \text{ \& }\epsilon_u \text{ is even;}\\

  \sum\limits_{k=0}^{(\epsilon_u-1)/2} \big(\prod\limits_{a=\epsilon_u+1-2k}^{k}(q^{\frac{ad_u}{2}}-(-1)^a)\big)/\big(\prod\limits_{j=1}^{k}(q^{jd_u}-1)\big)   & \text{if }u \in \mathcal{I}_2 \text{ \& }\epsilon_u \text{ is odd.}
  \end{array} \right.$$}
\begin{proof} By Proposition \ref{lem51}(b),  we see that  to determine the number $\mathfrak{N}_{1},$ it is enough to determine the numbers $\mathfrak{E}_{u}$ for all $u \in \mathcal{I}_{1}\cup \mathcal{I}_{2}$ and $\mathfrak{E}_{v}$ for $s+1 \leq v \leq t.$
\\\textbf{I.}  First let $u \in \mathcal{I}_{1}.$ Here we  see, by  Lemma \ref{52}(a),  that for  $u \in \mathcal{I}_1,$ $(\mathcal{G}_u,[\cdot,\cdot]\restriction_{\mathcal{G}_u\times\mathcal{G}_u})$ is an  $\epsilon_u$-dimensional orthogonal space  over $ F_u \simeq \mathbb{F}_{q}.$  Now we shall distinguish the following two cases: (i) $q$ is odd and (ii)  $q$ is even. 
\\ \textbf{(i)} When $q$ is  odd, one can view $\mathcal{G}_u$ as a non-degenerate quadratic space over $F_u$ with respect to the quadratic map $Q_u : \mathcal{G}_{u} \rightarrow F_u,$ defined as $Q_u({a}(x))=\frac{1}{2}\left[{a}(x),{a}(x)\right]$ for all ${a}(x) \in \mathcal{G}_{u}.$ In view of this, we see, by Theorem \ref{quadratic}(a),  that the Witt index of $\mathcal{G}_{u}$ is given by \begin{equation}\label{e3.6}w_u=\left\{ \begin{array}{ll}
\epsilon_u/2 & \text{if either }\epsilon_u \text{ is even and }q \equiv 1~(\text{mod }4) \text{ or }  \epsilon_u \equiv 0~(\text{mod }4) \text{ and }q \equiv 3~(\text{mod }4) ;  \\

(\epsilon_u-2)/2 & \text{if }  \epsilon_u \equiv 2~(\text{mod }4) \text{ and }q \equiv 3~(\text{mod }4);   \\

(\epsilon_u-1)/2 & \text{if } \epsilon_u \text{ is odd.}  \end{array}\right.\end{equation}   
Further,  by Theorem \ref{quadratic}(b),   we see that  the number $\mathfrak{E}_u$ of distinct self-orthogonal subspaces of $\mathcal{G}_u$ over $F_u$ is given by  $\mathfrak{E}_u=\sum\limits_{k=0}^{w_u} \left( {w_u \brack k}_q \prod\limits_{a=0}^{k-1} \big(q^{w_u-\epsilon-a} +1\big) \right),$ where $w_u$ (the Witt index of $\mathcal{G}_u$) is given by \eqref{e3.6} and $\epsilon=1$ if $w_u=\epsilon_u/2,$ $\epsilon=-1$ if $w_u=(\epsilon_u-2)/2,$ while $\epsilon=0$ if $w_u=(\epsilon_u-1)/2.$
\\\textbf{(ii)} Next let $q$ be even.  Let us define $\mathcal{V}_u=\{(\epsilon_{u,1}c_{u,1},\epsilon_{u,2}c_{u,2},\cdots,\epsilon_{u,\ell}c_{u,\ell}) \in \mathcal{G}_u: \sum\limits_{i=1}^{\ell} \epsilon_{u,i}c_{u,i}=0\}.$ Note that $\mathcal{V}_{u}$ is an $F_u$-subspace of $\mathcal{G}_u$ and $\text{dim}_{F_u} \mathcal{V}_u = \epsilon_u -1.$ Let $\theta_u= \left( \epsilon_{u,1},\epsilon_{u,2},\cdots,\epsilon_{u,\ell} \right) \in \mathcal{G}_u.$ Since $\sum\limits_{i=1}^{\ell} \epsilon_{u,i}= \epsilon_u,$ we see that $\theta_u \in \mathcal{V}_{u}$ if and only if $\epsilon_u$ is even. 

When $\epsilon_u$ is odd, we see that $\theta_u \notin \mathcal{V}_u,$ which implies that $\mathcal{G}_u = \mathcal{V}_u \oplus \left<\theta_u \right>.$ Next it is easy to observe that any self-orthogonal $F_u$-subspace of $\mathcal{G}_u$ is contained in $\mathcal{V}_u$ and that  $[c,\theta_u]=0$ for each $c\in \mathcal{V}_u.$ Further, we note that as $q$ is even, all $m_i$'s are odd. This implies that $m$ is odd, which further implies that $\frac{m}{m_i}=1$ in $F_u.$ Moreover, as $u \in \mathcal{I}_{1},$ the conjugation $^{-}$ is the identity map on $F_{u}.$ This implies that for each $c_u=(\epsilon_{u,1}c_{u,1},\epsilon_{u,2}c_{u,2},\cdots,\epsilon_{u,\ell}c_{u,\ell}) \in \mathcal{V}_{u},$ we have $[c_u,c_u]=\sum\limits_{i=1}^{\ell}\epsilon_{u,i}c_{u,i}^2\frac{m}{m_i}=(\sum\limits_{i=1}^{\ell}\epsilon_{u,i}c_{u,i})^2=0.$ From this, it follows that $[\cdot,\cdot]\restriction_{\mathcal{V}_u\times\mathcal{V}_u}$ is a non-degenerate, reflexive and alternating bilinear form on $\mathcal{V}_u,$ i.e., 
 $(\mathcal{V}_u,[\cdot,\cdot]\restriction_{\mathcal{V}_u\times\mathcal{V}_u})$ is a symplectic space over $F_u$ having the dimension as $\epsilon_u-1$  and the Witt index as $\frac{\epsilon_u-1}{2}.$ Now by Theorem \ref{symplectic}(b),  we see that for $0\leq k \leq \frac{\epsilon_u-1}{2},$ the number of distinct $k$-dimensional self-orthogonal subspaces of $\mathcal{V}_u$ (and hence of $\mathcal{G}_{u}$) is given by $ {(\epsilon_u-1)/2 \brack k}_q \prod\limits_{a=0}^{k-1} \big(q^{\frac{\epsilon_u -2a -1}{2}}+1\big).$ This implies that  the number $\mathfrak{E}_u$ of distinct self-orthogonal subspaces of $\mathcal{G}_u$ over $F_u$ is given by
$\mathfrak{E}_u=\sum\limits_{k=0}^{\frac{\epsilon_u-1}{2}} \left( {(\epsilon_u-1)/2 \brack k}_q \prod\limits_{a=0}^{k-1} \big(q^{\frac{\epsilon_u -2a -1}{2}}+1\big)\right).$ 

On the other hand, when  $\epsilon_u$ is even, we see that $\theta_u \in \mathcal{V}_u.$ Let $\widehat{\mathcal{V}}_u$ be an $(\epsilon_u-2)$-dimensional $F_u$-subspace of $\mathcal{V}_u$ such that $\theta_u \notin \widehat{\mathcal{V}}_u.$ Then we have $\mathcal{V}_u= \widehat{\mathcal{V}}_u \oplus \left< \theta_u \right>.$ Next we observe that there exists $z_u \in \widehat{\mathcal{V}}_u^{\perp} \setminus \mathcal{V}_u.$ From this, it follows that $\mathcal{G}_u= \widehat{\mathcal{V}}_u \oplus \left<z_u\right> \oplus \left<\theta_u\right>.$ It is easy to see that any self-orthogonal $F_u$-subspace of $\mathcal{G}_u$ is contained in $\mathcal{V}_u=\widehat{\mathcal{V}}_u \oplus \left< \theta_u\right>,$ which  implies that any self-orthogonal subspace of $\mathcal{G}_u$ is either (i) contained in $\widehat{\mathcal{V}}_u,$ or (ii) contained in $\widehat{\mathcal{V}}_u \oplus \left< \theta_u\right>$ but not in $\widehat{\mathcal{V}}_u.$ Further, we observe that $(\widehat{\mathcal{V}}_u, [\cdot,\cdot]\restriction_{\widehat{\mathcal{V}}_u\times \widehat{\mathcal{V}}_u})$ is a symplectic space over $F_u$ having the dimension as $\epsilon_u-2$ and the Witt index as $(\epsilon_u-2)/2.$ Now by Theorem \ref{symplectic}(b),  we see that for $0 \leq k \leq \frac{\epsilon_u-2}{2},$ the number $\mathfrak{E}_u$ of distinct $k$-dimensional totally isotropic  subspaces of $\mathcal{G}_u$ is given by $\mathfrak{E}_u={(\epsilon_u-2)/{2} \brack k}_q \prod\limits_{a=0}^{k-1}\big(q^{\frac{\epsilon_u-2a-2}{2}}+1\big).$ 
Next we proceed to count all $k$-dimensional $F_u$-subspaces that are contained in $\widehat{\mathcal{V}}_u \oplus \left<\theta_u\right>$ but not in $\widehat{\mathcal{V}}_u.$ To do this, we observe that for $1 \leq k \leq \epsilon_u/2,$ any such $k$-dimensional  $F_u$-subspace of $\mathcal{G}_u$  is of the type $\left<y_1,y_2,\cdots,y_{k-1},\theta_u+y_k\right>,$ where $y_r \in \widehat{\mathcal{V}}_u \setminus \{0\}$ for $1\leq r \leq k-1$ and $y_k \in \widehat{\mathcal{V}}_u.$ 
We further observe that the $k$-dimensional $F_u$-subspace $\left<y_1,y_2,\cdots,y_{k-1},\theta_u+y_k\right>$ of $\mathcal{G}_u$ is self-orthogonal if and only if $\left<y_1,y_2,\cdots,y_{k-1}\right> $ is a self-orthogonal $F_u$-subspace of $\widehat{\mathcal{V}}_u$ and $y_k \in \left<y_1,y_2,\cdots,y_{k-1}\right>^{\perp}.$ Now by Theorem \ref{symplectic}(b),   for $1 \leq k \leq \epsilon_u/2,$ we see that the number of distinct $(k-1)$-dimensional self-orthogonal $F_u$-subspaces of $\widehat{\mathcal{V}}_u$ is given by ${(\epsilon_u-2)/2 \brack k-1}_q \prod\limits_{a=0}^{k-2} \big( q^{\frac{\epsilon_u-2a-2}{2}}+1\big).$ Next we observe that for $y_k,y_k' \in \left<y_1,y_2,\cdots,y_{k-1}\right>^{\perp} \setminus \left<y_1,y_2,\cdots,y_{k-1}\right>, \left< y_1,y_2\cdots,y_{k-1},\theta_u+y_k\right> = \left< y_1,y_2\cdots,y_{k-1},\theta_u+y_k'\right>$ if and only if $y_k-y_k' \in \left<y_1,y_2\cdots,y_{k-1}\right>,$ i.e., all $y_k$'s lying in different cosets of $\left< y_1,y_2\cdots,y_{k-1}\right>^{\perp}/ \left<y_1,y_2,\cdots,y_{k-1}\right>$ give rise to distinct self-orthogonal spaces of the type $\left<y_1,y_2,\cdots,y_{k-1},\theta_u+y_k\right>.$ 
We also observe that the $F_u$-dimension of $\left<y_1,y_2,\cdots,y_{k-1}\right>^{\perp}$ is $\epsilon_u-2-(k-1),$ which implies that $y_k$ has  $q^{\epsilon_u-2k}$ relevant choices. Therefore for $1 \leq k \leq \epsilon_u/2,$ the number of distinct $k$-dimensional $F_u$-subspaces of $\mathcal{G}_u$ that are contained in $\widehat{\mathcal{V}}_u \oplus \left<\theta_u\right>$ but not in $\widehat{\mathcal{V}}_u,$ is given by $ q^{\epsilon_u-2k} {({\epsilon_u-2})/{2} \brack k-1}_q \prod\limits_{a=0}^{k-2}\big(q^{\frac{\epsilon_u-2a-2}{2}}+1\big).$ On combining both the cases, we see that the number $\mathfrak{E}_u$ of distinct self-orthogonal $F_u$-subspaces of $\mathcal{G}_u$ is given by $\mathfrak{E}_u=\sum\limits_{k=0}^{\frac{\epsilon_u-2}{2}} {({\epsilon_u-2})/{2} \brack k}_q \prod\limits_{a=0}^{k-1}\big(q^{\frac{\epsilon_u-2a-2}{2}}+1\big)+ \sum\limits_{k'=1}^{\epsilon_u/2} q^{\epsilon_u-2k'} {({\epsilon_u-2})/{2} \brack k'-1}_q \prod\limits_{a'=0}^{k'-2}\big(q^{\frac{\epsilon_u-2a'-2}{2}}+1\big)$ when $\epsilon_u$ is even.
\\\textbf{II.} Next let $u \in \mathcal{I}_2.$ Here we observe, from Lemma \ref{52} (b), that  $(\mathcal{G}_u,[\cdot,\cdot]\restriction_{\mathcal{G}_u\times\mathcal{G}_u})$ is a unitary space over $F_u$ having dimension $\epsilon_u.$ Further, by Theorem \ref{unitary}(a), the Witt index $w_u$ of $\mathcal{G}_u$ is given by 
$$w_u=\left\{\begin{array}{ll}\epsilon_u/2 & \text{if }\epsilon_u \text{ is even;}\\(\epsilon_u-1)/2 & \text{if }\epsilon_u \text{ is odd.}\end{array}\right.$$ Now by Theorem \ref{unitary}(b),    we see that the number $\mathfrak{E}_u$ of distinct self-orthogonal $F_u$-subspaces of $\mathcal{G}_u$ is given by 
$\mathfrak{E}_u=
 \sum\limits_{k=0}^{w_u} \big(\prod\limits_{a=\epsilon_u+1-2k}^{k}(q^{\frac{ad_u}{2}}-(-1)^a)\big)/\big(\prod\limits_{j=1}^{k}(q^{jd_u}-1)\big).$
\\\textbf{III.} Finally,  for $s+1 \leq v \leq t,$ we shall  count the number of pairs $(\mathcal{C}_v,\mathcal{C}_v')$ with $\mathcal{C}_v$ as an $F_v$-subspace of $\mathcal{K}_{v}$ and $\mathcal{C}_v'$ as an $F_v'$-subspace of $\mathcal{K}_{v}'$ satisfying $\mathcal{C}_v' \subseteq \mathcal{C}_v^{\perp} \cap \mathcal{K}'_{v}.$ In order to do this, we note that $(\mathcal{K}_v \times \mathcal{K}'_{v},[\cdot,\cdot]\restriction_{\mathcal{K}_v\times\mathcal{K}'_v})$ is non-degenerate. So if the dimension of $\mathcal{C}_v$ is $k_1,$ then one can observe that the dimension of $\mathcal{C}_v^{\perp} \cap \mathcal{K}'_{v}$ is $\tau_v-k_1,$ where $0 \leq k_1 \leq \tau_v.$ As $\mathcal{C}_v'$ has to be a subspace of $\mathcal{C}_v^{\perp} \cap \mathcal{K}'_{v},$ by \eqref{sub}, $\mathcal{C}_v'$ has $N(\tau_v-k_1,q^{d_v})=\sum\limits_{k_2=0}^{\tau_v - k_1} {\tau_v-k_1 \brack k_2}_{q^{d_v}}$ choices if  $\text{dim}_{F_v}\mathcal{C}_v=k_1.$  Further, we see that the number of distinct $k_1$-dimensional $F_v$-subspaces of $\mathcal{G}_{v}$ is given by ${\tau_v \brack k_1}_{q^{d_v}}.$ From this, it follows the number $\mathfrak{E}_v$ of pairs $(\mathcal{C}_v,\mathcal{C}_v')$ with $\mathcal{C}_v$ as an $F_v$-subspace of $\mathcal{K}_{v}$ and $\mathcal{C}_v'$ as an $F_v'$-subspace of $\mathcal{K}_{v}'$ satisfying $\mathcal{C}_v' \subseteq \mathcal{C}_v^{\perp} \cap \mathcal{K}_{v}'$ is given by $\mathfrak{E}_v= \sum\limits_{k_1=0}^{\tau_v} {\tau_v \brack k_1}_{q^{d_v}} \big(\sum\limits_{k_2=0}^{\tau_v - k_1} {\tau_v-k_1 \brack k_2}_{q^{d_v}}\big).$ Now using Proposition \ref{lem51}(b) again,  the desired result follows immediately.\end{proof}
\section{Some more results on multi-twisted codes over finite fields}\label{Gen}
A $\Lambda$-multi-twisted $\mathcal{C}$ of length $n$ over $\mathbb{F}_{q}$ is called a $\rho$-generator code if $\rho$ is the smallest positive integer with the property that there exist $\rho$ number of codewords ${a_1}(x),{a_2}(x),\cdots,{a_{\rho}}(x)\in \mathcal{C}$ such that every ${c}(x)\in \mathcal{C}$ can be expressed as ${c}(x)=f_1(x){a}_1(x)+f_2(x){a}_{2}(x)+\cdots+f_{\rho}(x){a}_{\rho}(x)$ for some $f_1(x),f_2(x),\cdots, f_{\rho}(x) \in \mathbb{F}_{q}[x],$ and we denote $\mathcal{C} = \big< {a_1}(x),{a_2}(x),\cdots,{a_{\rho}}(x) \big>.$  In a recent work, Aydin and  Halilovic \cite{aydin} studied some basic properties of 1-generator $\Lambda$-multi-twisted codes of  length $n$ over $\mathbb{F}_{q}.$ In this section, we shall study some basic properties of $\rho$-generator $\Lambda$-multi-twisted codes over finite fields.

 Let $\mathcal{C}= \big< {a_1}(x),{a_2}(x),\cdots,{a_{\rho}}(x) \big>$ be a $\rho$-generator $\Lambda$-multi-twisted  code  of length $n$ over $\mathbb{F}_{q},$ where ${a_\kappa}(x)=(a_{\kappa,1}(x),a_{\kappa,2}(x),\cdots,a_{\kappa,\ell}(x))$ for  $1 \leq \kappa \leq \rho.$ For $1 \leq i \leq \ell,$ if $\pi_i$ is the projection of $V$ onto $\mathbb{F}_{q}[x]/\langle x^{m_i}-\lambda_i \rangle,$ then it is easy to observe that $\pi_i(\mathcal{C})$ is a $\lambda_i$-constacyclic code of length $m_i$ over $\mathbb{F}_{q}$ with the generator polynomial as  $\gcd(a_{1,i}(x),a_{2,i}(x),\cdots,a_{\rho,i}(x), x^{m_i} - \lambda_i).$ Further, the annihilator of $\mathcal{C}$ is defined as $\text{Ann}(\mathcal{C})=\{ f(x)\in \Fq [x]: f(x){a_\kappa}(x) ={0} \text{ in } V \text{ for } 1 \le \kappa \le \rho \}.$ It  is easy to see that $\text{Ann}(\mathcal{C})$ is an ideal of the principal ideal ring $\mathbb{F}_{q}[x].$ Note that $\prod\limits_{i=1}^{\ell}(x^{m_i}-\lambda_i) \in \text{Ann}(\mathcal{C}).$ Therefore  there exists a unique smallest degree monic polynomial $h(x)\in\mathbb{F}_{q}[x],$ which generates $\text{Ann}(\mathcal{C});$  the polynomial $h(x)$ is called the parity-check polynomial of  $\mathcal{C}.$ In the following theorem, we determine the parity-check polynomial of a $\rho$-generator $\Lambda$-multi-twisted code of length $n$ over $\mathbb{F}_{q}.$
{\thm\label{parity0} Let $\mathcal{C}=\big< {a_1}(x),{a_2}(x),\cdots,{a_{\rho}}(x) \big>$ be a $\rho$-generator $\Lambda$-multi-twisted  code of  length $n$ over $\mathbb{F}_{q},$ where ${a_\kappa}(x)=(a_{\kappa,1}(x),a_{\kappa,2}(x),\cdots,a_{\kappa,\ell}(x))$ for  $1 \le \kappa \le \rho.$ Let $w_i(x)=\gcd(a_{1,i}(x),a_{2,i}(x),\cdots,a_{\rho,i}(x), x^{m_i} - \lambda_i)$ for $1 \leq i \leq \ell.$
 Then the following hold.
 \begin{enumerate}\item[(a)]  The parity-check polynomial $h(x)$ of $\mathcal{C}$ is given by $h(x)= \underset{1 \le i \le \ell}{\text{lcm}} \left[ \frac{ {x^{m_i} - \lambda_i}}{{w_i(x)}}\right].$ \item[(b)] When $\rho=1,$ we have $\text{dim}_{\mathbb{F}_{q}} \mathcal{C}=\text{deg }h(x).$\end{enumerate}}
\begin{proof}    To prove the first part, for $ 1 \leq i \leq \ell,$ let $\pi_i$ be the projection of $V$ onto the ring $V_i.$ Then for each $i,$ we see that $\pi_i(\mathcal{C})$ is a $\lambda_i$-constacyclic code of length $m_i$ over $\mathbb{F}_{q}$ having the generator polynomial as $w_i(x)=\text{gcd}(a_{1,i}(x),a_{2,i}(x),\cdots,a_{\rho,i}(x), x^{m_i} - \lambda_i).$ From this, we observe that $\underset{1 \le i \le \ell}{\text{lcm}} \left[ \frac{x^{m_i}-\lambda_i}{w_i(x)} \right]$ is an annihilating polynomial of the code $\mathcal{C},$ so $h(x)$ divides $\underset{1 \le i \le \ell}{\text{lcm}}  \left[\frac{x^{m_i}-\lambda_i}{w_i(x)}\right].$ On the other hand,  since $h(x)$ is the parity-check polynomial of $\mathcal{C},$ we must have  $a_{\kappa,i}(x)h(x)=0$ in the ring $\frac{\Fq [x]}{\left< x^{m_i} - \lambda_i \right>}$ for  $1 \leq \kappa \leq \rho$ and $1 \leq i \leq \ell.$ This implies that $x^{m_i}-\lambda_i$ divides $h(x) \text{gcd}(a_{1,i}(x), a_{2,i}(x), \cdots, a_{\rho, i}(x))$ in $\Fq[x],$ which further implies that $\frac{x^{m_i} - \lambda_i}{w_i(x)}$ divides $h(x)$ for each $i.$ This shows that $\underset{1 \le i \le \ell}{\text{lcm}}  \left[ \frac{x^{m_i} - \lambda_i}{w_i(x)}\right]$ divides $h(x)$ in $\Fq[x].$ From this, we get $h(x)= \underset{1 \le i \le \ell}{\text{lcm}} \left[ \frac{ {x^{m_i} - \lambda_i}}{{w_i(x)}}\right].$

To prove the second part, let $\rho=1$ so that $\mathcal{C}=\langle a_1(x) \rangle.$ Now define a map $\Xi: \mathbb{F}_{q}[x]\rightarrow V$ as $\Xi(\alpha(x))=\alpha(x)a_1(x)$ for each $\alpha(x) \in \mathbb{F}_{q}[x].$ We see that $\Xi$ is an $\mathbb{F}_{q}[x]$-module homomorphism with kernal as $\langle h(x)\rangle$ and image as $\mathcal{C}.$ From this, we get $\mathbb{F}_{q}[x]/\langle h(x) \rangle \simeq \mathcal{C},$ which implies that $\text{dim}_{\mathbb{F}_{q}}\mathcal{C}=\text{deg }h(x).$
\end{proof}
However, in the following example, we observe that Theorem \ref{parity0}(b) does not hold for a $\rho$-generator $\Lambda$-multi-twisted code with $\rho \geq 2.$

{\ex Let $q=2,$ $\ell=3,$ $m_1=3,$ $m_2=5,$ $m_3=7$ and $\lambda_1=\lambda_2=\lambda_3=1,$ so that $\Lambda=(1,1,1).$ Let $\mathcal{C}$ be a $2$-generator $\Lambda$-multi-twisted code  length $15$ over $\mathbb{F}_2,$ whose generating set is $\{(x^2+1,x^3+x,x^3+x+1),(x^2+x,x^4+x^3+x^2+x+1,x^3+x^2+1)\}.$ Here $V= V_1 \times V_2 \times V_3,$ where $V_1=\frac{\mathbb{F}_2[x]}{\left< x^3 -1 \right>} ,$ $V_2= \frac{\mathbb{F}_2[x]}{\left< x^5 -1 \right>} $ and $V_3=\frac{\mathbb{F}_2[x]}{\left< x^7-1 \right>}.$ In order to write down the decomposition of $V,$ we see that $x^3-1=(x+1)(x^2+x+1),$ $ x^5-1=(x+1)(x^4+x^3+x^2+x+1)$ and $ x^7-1=(x+1)(x^3+x^2+1)(x^3+x+1)$ are irreducible factorizations of $x^3-1,$ $x^5-1$ and $x^7-1$ over $\mathbb{F}_2.$ Let us take $g_1(x)=x-1,$ $g_2(x)=x^2+x+1,$ $g_3(x)=x^4+x^3+x^2+x+1,$ $g_4(x)=x^3+x^2+1$ and $g_5(x)=x^3+x+1,$ so that $F_w = \mathbb{F}_{2}[x]/\langle g_w(x)\rangle$ for $1 \leq w \leq 5.$ Note that $F_1 \simeq \mathbb{F}_{2},$ $F_2 \simeq \mathbb{F}_{4},$ $F_3 \simeq \mathbb{F}_{16}$ and $F_4 \simeq F_5 \simeq \mathbb{F}_{8}.$ By applying Chinese remainder Theorem, we get $V=\big( F_1,F_1,F_1\big) \oplus(F_2,0,0\big) \oplus \big(0,F_3 ,0\big) \oplus \big(0,0,F_4\big) \oplus \big(0,0,F_5\big).$ From this and applying Theorem \ref{t11}, we see that the constituents of $\mathcal{C}$ are given by  $\mathcal{C}_1 = \left< (0,0,1),(0,1,1)\right>,$ $\mathcal{C}_2 = \left< (\alpha_2,0,0),(1,0,0)\right>$  with $\alpha_2^2+\alpha_2+1=0,$ $ \mathcal{C}_3 = \left< (0,\alpha_3^3+\alpha_3,0),(0,0,0)\right>$  with $\alpha_3^4+\alpha_3^3 +\alpha_3^2 + \alpha_3 +1=0,$  $\mathcal{C}_4 = \left< (0,0, \alpha_4 + \alpha_4^2),(0,0,0)\right>$  with $\alpha_4^3 + \alpha_4^2 +1=0$ and $\mathcal{C}_5 = \left< (0,0,0),(0,0, \alpha_5 + \alpha_5^2)\right>$  with $\alpha_5^3 + \alpha_5 +1=0.$ We observe that $\text{dim}_{F_1}\mathcal{C}_{1}=2$ and $\text{dim}_{F_2}\mathcal{C}_{2}=\text{dim}_{F_3}\mathcal{C}_{3}=\text{dim}_{F_4}\mathcal{C}_{4}=\text{dim}_{F_5}\mathcal{C}_{5}=1.$ Using this and by applying Theorem \ref{t11} again, we get  $\text{dim}_{\mathbb{F}_2}\mathcal{C} =\sum\limits_{w=1}^{5}\text{dim}_{F_w}\mathcal{C}_{w}~ \text{deg }g_w(x)= 14.$ On the other hand, by applying Theorem \ref{parity0}(a), we get $h(x)= (x+1)(x^4+x^3+x^2+x+1)(x^3+x^2+1)(x^3+x+1)(x^2+x+1),$ which implies that $\text{deg }h(x)=13.$ This shows that $\text{dim}_{\mathbb{F}_2}\mathcal{C} \neq \text{deg }h(x)$ in this case.}

In the following theorem, we determine generating sets of dual codes of some $\rho$-generator multi-twisted codes of length $n$ over $\mathbb{F}_{q}.$
{\thm\label{parity1} Let $x^{m_1}-\lambda_1, x^{m_2}-\lambda_2,\cdots,x^{m_{\ell}}-\lambda_{\ell}$ be pairwise coprime polynomials in $\Fq[x].$ Let $\mathcal{C}=\big< {a_1}(x),{a_2}(x), \\ \cdots,{a_{\rho}}(x) \big>$ be a $\rho$-generator $\Lambda$-multi-twisted  code of length $n$ over $\mathbb{F}_{q},$ where ${a_\kappa}(x)=(a_{\kappa,1}(x),a_{\kappa,2}(x),\cdots,a_{\kappa,\ell}(x))$ for  $1 \le \kappa \le \rho.$ Then we have $$\mathcal{C}^{\perp}=\left< {H_1}(x), {H_2}(x), \cdots, {H_{\ell}}(x) \right>,$$ where ${H_i}(x) = (0,\cdots,0,\overline{h_i(x)},0,\cdots,0)$ with $h_i(x)=(x^{m_i} - \lambda_i)/{gcd(a_{1,i}(x),a_{2,i}(x),\cdots,a_{\rho,i}(x), x^{m_i} - \lambda_i)}$ for $1 \le i \le \ell.$ (Here $\overline{h_i(x)}=h_i(x^{-1})$ with $x^{-1}= \lambda_i^{-1}x^{m_i-1}$ for each $i.$)}
\begin{proof} In order to prove this, we see that $\left({a_\kappa}(x),{H_i}(x)\right)= {a_{\kappa,i}(x)h_i(x)\lambda_i  (x^m -1)}/{(x^{m_i}- \lambda_i)}={a_{\kappa,i}(x)\lambda_i (x^m -1)}/$ ${\text{gcd}(a_{1,i}(x),a_{2,i}(x),\cdots,a_{\rho,i}(x), x^{m_i} - \lambda_i)}=0$ in $\frac{\mathbb{F}_{q}[x]}{\langle x^m-1 \rangle}$ for $1 \le \kappa \le \rho$ and $1 \le i \le \ell.$ This implies that ${H_i}(x) \in \mathcal{C}^{\perp}$ for  each $i.$ Now let ${b}(x)=(b_1(x),b_2(x), \cdots, b_{\ell}(x)) \in \mathcal{C}^{\perp}.$ Then we have $\left({a_\kappa}(x),{b}(x)\right)=0$ in $\frac{\mathbb{F}_{q}[x]}{\langle x^m-1 \rangle}$ for $1 \le \kappa \le \rho.$ From this, we see that $x^m-1$ divides $\sum\limits_{i=1}^{\ell} a_{\kappa,i}(x)\overline{b_i(x)}\lambda_i(x^m-1)/(x^{m_i} -\lambda_i),$ which implies that $x^{m_j}-\lambda_j$ divides $\sum\limits_{i=1}^{\ell} a_{\kappa,i}(x)\overline{b_i(x)}\lambda_i(x^m-1)/(x^{m_i} -\lambda_i)$ for $1 \leq \kappa \leq \rho$ and $1 \leq j \leq \ell.$ As $(x^{m_i}-\lambda_i, x^{m_j}-\lambda_j)=1$ for all $j \neq i,$  $x^{m_j} - \lambda_j$ divides $a_{\kappa,j}(x)\overline{b_j(x)}$ for each $\kappa.$ This implies that $\overline{h_j(x)}$ divides $b_j(x)$ for each $j.$ This gives   ${b}(x) \in \left< {H_1}(x), {H_2}(x), \cdots, {H_{\ell}}(x) \right>,$ from which the desired result follows.
\end{proof}
In the following theorem, we obtain another lower bound, {\it viz.} the BCH type lower bound, on the minimum Hamming distances of $\rho$-generator $\Lambda$-multi-twisted codes of length $n$ over $\mathbb{F}_{q}.$
{\thm\label{parity2} Let $\mathcal{C}=\big< {a_1}(x),{a_2}(x),\cdots,{a_{\rho}}(x) \big>$ be a $\rho$-generator $\Lambda$-multi-twisted  code of  length $n$ over $\mathbb{F}_{q},$ where ${a_\kappa}(x)=(a_{\kappa,1}(x),a_{\kappa,2}(x),\cdots,a_{\kappa,\ell}(x))$ for  $1 \le \kappa \le \rho.$ Then the minimum Hamming distance $d_{\min}(\mathcal{C})$ of the code $\mathcal{C}$ satisfies \vspace{-1mm}$$d_{\min}(\mathcal{C}) \geq \underset{1 \leq i \leq \ell}{\text{min}}(b_i+1),\vspace{-1mm}$$ where for each $i,$ $b_i$ is the maximum number of consecutive exponents of zeros of $\text{gcd}(a_{1,i}(x), a_{2,i}(x),\cdots, a_{\rho,i}(x), x^{m_i}- \lambda_i )$ over  $\Fq.$}
\begin{proof} To prove this, let ${B_i}(x)=(0,\cdots,0, \underbrace{w_i(x)}_{i^{th}},0,\cdots,0) \in \mathscr{R},$ where $w_i(x)=\gcd(a_{1,i}(x), a_{2,i}(x),\cdots, a_{\rho,i}(x),$ $ x^{m_i}- \lambda_i)$ for $1 \leq i \leq \ell.$ Now let $\mathcal{C}'=\left<{B}_1(x),{B}_{2}(x),\cdots,{B}_{\ell}(x)\right>$ be a $\Lambda$-multi-twisted code of length $n$ over $\mathbb{F}_{q}.$ 
 Here for $1 \leq \kappa \leq \rho,$ we observe that ${a_\kappa}(x)=\sum\limits_{i=1}^{\ell} \frac{a_{\kappa,i}(x)}{w_i(x)}B_i(x),$ which implies that $\mathcal{C} \subseteq \mathcal{C}'.$ From this, we obtain $d(\mathcal{C}) \geq d(\mathcal{C}').$ Next for $1 \leq i \leq \ell,$ if $\pi_i$ is the projection of $V$ onto $\frac{\mathbb{F}_{q}[x]}{\left<x^{m_i}-\lambda_i\right>},$ then $\pi_i(\mathcal{C}')$ is a $\lambda_i$-constacyclic code of length $m_i$ over $\mathbb{F}_{q}$ having the generator polynomial as $w_i(x).$ Now if $b_i$ is the maximum number of consecutive exponents of zeros of $w_i(x),$ then working in a similar manner as in Theorem 8 of  \cite[Ch. 7]{mac}, we see  that $d(\pi_i(\mathcal{C}')) \geq b_i +1.$ Further, we observe that if the $i$th block $c_i \in \mathbb{F}_{q}^{m_i}$ of a codeword  $c=(c_1,c_2,\cdots,c_{\ell}) \in \mathcal{C}'$ is non-zero, then the Hamming weight $w_H(c_i)$ of $c_i$ satisfies $w_H(c_i) \geq b_i+1.$ This implies that $w_H(c)\geq\min\limits_{1 \leq i \leq \ell}(b_i+1)$ for each $c(\neq 0) \in \mathcal{C}'.$ From this, we obtain the desired result.
\end{proof}
Next for $\Lambda =(\lambda_1,\lambda_2,\cdots,\lambda_{\ell}),$   $\Omega=(\omega_1,\omega_2,\cdots,\omega_{\ell}) \in \mathbb{F}_{q}^{\ell},$ let us define $\mathcal{I}_{\Lambda,\Omega}= \{i: 1\leq i\leq \ell ,\lambda_i \neq \omega_i\}$ and  $\Lambda-\Omega = (\lambda_1-\omega_1,\lambda_2-\omega_2,\cdots, \lambda_{\ell}-\omega_{\ell}).$ For $1 \leq i \leq \ell,$ let $\pi_i$ be the projection of $V$ onto $V_i.$ If $\mathcal{C}$ is a $\Lambda$-multi-twisted code of length $n$ over $\mathbb{F}_{q},$ then  one can easily observe that $\pi_i(\mathcal{C})$ is a $\lambda_i$-constacyclic code of length $m_i$ over $\mathbb{F}_{q}$ for $1 \leq i \leq \ell.$ In the following theorem, we obtain a lower bound on the dimension of some $[\Lambda,\Omega]$-multi-twisted codes of length $n$ over $\mathbb{F}_{q},$ where $\Lambda \neq \Omega.$ It extends Theorem 1 of Saleh and Esmaeili \cite{saleh}.
\begin{thm}\label{t23}  
Let $\Lambda =(\lambda_1,\lambda_2,\cdots,\lambda_{\ell})$ and $ \Omega=(\omega_1,\omega_2,\cdots,\omega_{\ell}),$  where $\lambda_i, \omega_i$'s are non-zero elements of $\mathbb{F}_{q}.$  Let $\mathcal{C}$ be a $\Lambda$-multi-twisted  and  $\Omega$-multi-twisted code of length $n$ over $\mathbb{F}_{q}.$ Let $\mathcal{J}_{\mathcal{C}}=\{i: 1 \leq i \leq \ell,\pi_i(\mathcal{C}) \neq \{0\}\}.$   If $\mathcal{I}_{\Lambda,\Omega}  \cap \mathcal{J}_{\mathcal{C}}$ is non-empty,  then  $\text{dim}_{\mathbb{F}_{q}}\mathcal{C} \geq \max\limits_{i \in \mathcal{I}_{\Lambda,\Omega}  \cap \mathcal{J}_{\mathcal{C}}}\{m_i\}.$ As a consequence, if $\lambda_i \neq \omega_i$ and $\pi_i(\mathcal{C}) \neq \{0\}$ for $1 \leq i \leq \ell, $ then we have $\text{dim}_{\mathbb{F}_{q}}\mathcal{C} \geq \max\{m_1,m_2,\cdots, m_{\ell}\}.$\end{thm}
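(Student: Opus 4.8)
The plan is to reduce the statement to a fact about a single coordinate block, namely: if $D\subseteq\mathbb{F}_{q}^{m_i}$ is a non-zero code of length $m_i$ that is simultaneously $\lambda_i$-constacyclic and $\omega_i$-constacyclic with $\lambda_i\neq\omega_i$, then $D=\mathbb{F}_{q}^{m_i}$. Granting this block fact, the theorem follows quickly. Fix any $i\in\mathcal{I}_{\Lambda,\Omega}\cap\mathcal{J}_{\mathcal{C}}$. Since $\mathcal{C}$ is both $\Lambda$-multi-twisted and $\Omega$-multi-twisted, the discussion preceding the theorem shows that $\pi_i(\mathcal{C})$ is both a $\lambda_i$-constacyclic and an $\omega_i$-constacyclic code of length $m_i$ over $\mathbb{F}_{q}$, and it is non-zero because $i\in\mathcal{J}_{\mathcal{C}}$. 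By the block fact, $\pi_i(\mathcal{C})=\mathbb{F}_{q}^{m_i}$, so $\text{dim}_{\mathbb{F}_{q}}\pi_i(\mathcal{C})=m_i$. As $\pi_i$ restricts to a surjective $\mathbb{F}_{q}$-linear map from $\mathcal{C}$ onto $\pi_i(\mathcal{C})$, we get $\text{dim}_{\mathbb{F}_{q}}\mathcal{C}\geq\text{dim}_{\mathbb{F}_{q}}\pi_i(\mathcal{C})=m_i$. Taking the maximum over all $i\in\mathcal{I}_{\Lambda,\Omega}\cap\mathcal{J}_{\mathcal{C}}$ yields the asserted bound, and the displayed consequence is the special case $\mathcal{I}_{\Lambda,\Omega}=\mathcal{J}_{\mathcal{C}}=\{1,2,\cdots,\ell\}$.

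It remains to establish the block fact, which I expect to be the crux of the argument. Let $D\subseteq\mathbb{F}_{q}^{m_i}$ be non-zero, $\lambda_i$-constacyclic and $\omega_i$-constacyclic, and identify codewords of $D$ with polynomials of degree $<m_i$ in $\mathbb{F}_{q}[x]$. For $c(x)=\sum_{j=0}^{m_i-1}c_jx^j\in D$, the $\lambda_i$-constacyclic shift sends $c(x)$ to $xc(x)\bmod(x^{m_i}-\lambda_i)=\lambda_ic_{m_i-1}+\sum_{j=1}^{m_i-1}c_{j-1}x^j$, while the $\omega_i$-constacyclic shift sends it to $\omega_ic_{m_i-1}+\sum_{j=1}^{m_i-1}c_{j-1}x^j$; both images lie in $D$, so their difference $(\lambda_i-\omega_i)c_{m_i-1}$, a constant polynomial, lies in $D$. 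Next I would observe that since $D$ is non-zero and closed under the $\lambda_i$-constacyclic shift, $D$ contains a codeword whose last coordinate $c_{m_i-1}$ is non-zero; combined with $\lambda_i\neq\omega_i$, this forces the constant polynomial $1$ into $D$. Applying the $\lambda_i$-constacyclic shift repeatedly to $1$ produces $x,x^2,\cdots,x^{m_i-1}$ (up to the scalar $\lambda_i$ on wrap-around), so $D=\mathbb{F}_{q}^{m_i}$.

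The only slightly delicate point is the claim that a non-zero $\lambda_i$-constacyclic code contains a codeword with non-zero final coordinate; I would justify it by taking any non-zero $c\in D$ with non-zero coordinate in some position $j$ and applying the $\lambda_i$-constacyclic shift $m_i-1-j$ times, which moves that coordinate, scaled by a non-zero element of $\mathbb{F}_{q}$, into position $m_i-1$. Everything else is routine bookkeeping: the fact that $\pi_i(\mathcal{C})$ is constacyclic for each relevant multiplier is already recorded before the theorem, and the inequality $\text{dim}_{\mathbb{F}_{q}}\mathcal{C}\geq\text{dim}_{\mathbb{F}_{q}}\pi_i(\mathcal{C})$ is immediate from surjectivity of $\pi_i\restriction_{\mathcal{C}}$. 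No step requires anything beyond elementary linear algebra and the basic structure of constacyclic codes, and the argument recovers Theorem 1 of Saleh and Esmaeili \cite{saleh} (the quasi-twisted case $m_1=\cdots=m_{\ell}$) by applying the same idea block by block.
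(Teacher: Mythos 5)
Your proposal is correct and is essentially the paper's own argument: both proofs hinge on the same key step of subtracting the $\lambda_i$-twisted and $\omega_i$-twisted shifts of a codeword whose block-$i$ last coordinate is non-zero, producing a codeword concentrated in a single position, and then cycling it to obtain $m_i$ independent directions. The only difference is organizational: you run this inside the projection $\pi_i(\mathcal{C})$, conclude $\pi_i(\mathcal{C})=\mathbb{F}_q^{m_i}$, and then use surjectivity of the restriction of $\pi_i$ to $\mathcal{C}$, whereas the paper exhibits the $m_i$ linearly independent shifted codewords directly inside $\mathcal{C}$.
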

\begin{proof} For each $i \in \mathcal{I}_{\Lambda,\Omega}  \cap \mathcal{J}_{\mathcal{C}},$ as $\pi_i(\mathcal{C}) \neq \{0\},$ there exists a codeword  $c=(c_{1,0},c_{1,1},\cdots,c_{1,m_1-1};c_{2,0},c_{2,1},\cdots,c_{2,m_2-1};\\\cdots;c_{\ell,0},c_{\ell,1},\cdots,c_{\ell,m_{\ell}-1})\in \mathcal{C}$ such that $c_{i,m_i-1} \neq 0.$ As $\mathcal{C}$ is a both $\Lambda$-multi-twisted and  $\Omega$-multi-twisted code, we note that $T_{\Lambda}(c),T_{\Omega}(c) \in \mathcal{C},$ which implies that $T_{\Lambda-\Omega}(c)=T_{\Lambda}(c)-T_{\Omega}(c)=((\lambda_1-\omega_1)c_{1,m_1-1},0,\cdots,0;(\lambda_2-\omega_2)c_{2,m_2-1},0,\cdots,0;\cdots;(\lambda_{\ell}-\omega_{\ell})c_{\ell,m_{\ell}-1},0,\cdots,0)\in \mathcal{C}.$ Further, for each $i \in \mathcal{I}_{\Lambda,\Omega}  \cap \mathcal{J}_{\mathcal{C}},$  we see that $(\lambda_i-\omega_i)c_{i,m_i-1} $ is non-zero, which implies that $ T_{\Lambda-\Omega}(c), T_{\Lambda-\Omega}^{2}(c), \cdots, T_{\Lambda-\Omega}^{m_i}(c) \in \mathcal{C}$ are linearly independent over $\mathbb{F}_{q},$ and hence $\text{dim}_{\mathbb{F}_{q}}\mathcal{C} \geq m_i.$ From this, it follows that $\text{dim}_{\mathbb{F}_{q}}\mathcal{C} \geq \max\limits_{i \in \mathcal{I}_{\Lambda,\Omega}  \cap \mathcal{J}_{\mathcal{C}}}\{m_i\}.$
 \end{proof}
 A $\Lambda$-multi-twisted code $\mathcal{C}$ of length $n$ over $\mathbb{F}_{q}$ is said to be LCD if it satisfies $\mathcal{C} \cap \mathcal{C}^{\perp}=\{0\}.$ In the following two theorems, we derive sufficient conditions under which a $\Lambda$-multi-twisted code is LCD  extending Theorems 2 and 3 of Saleh and Esmaeili \cite{saleh}. However, these conditions are not necessary for a $\Lambda$-multi-twisted code to be LCD, which we shall illustrate in Examples \ref{ex} and \ref{exx}. 
  \begin{thm} \label{L1}Let $\Lambda =(\lambda_1,\lambda_2,\cdots,\lambda_{\ell}),$  where $\lambda_1,\lambda_2,\cdots,\lambda_{\ell}$ are non-zero elements of $\mathbb{F}_{q}$ satisfying $\lambda_i\neq \lambda_i^{-1}$ for $1 \leq i \leq {\ell}.$ Let $\mathcal{C}$ be a $\Lambda$-multi-twisted code of length $n$ over $\mathbb{F}_{q}.$ Then the following hold.
 \begin{enumerate} \item[(a)] If either $\text{dim}_{\mathbb{F}_{q}}\mathcal{C} < \min\limits_{1 \leq i \leq {\ell} }\{m_i\}$ or $\text{dim}_{\mathbb{F}_{q}}\mathcal{C}^\perp  < \min\limits_{1 \leq i \leq {\ell}  }\{m_i\},$ then $\mathcal{C}$ is an LCD  code.\item[(b)]  If  $\text{dim}_{\mathbb{F}_{q}}\mathcal{C} =\min\limits_{1 \leq i \leq {\ell}  }\{m_i\},$ then $\mathcal{C}$ is either an LCD or a self-orthogonal code. \item[(c)] If  $\text{dim}_{\mathbb{F}_{q}}\mathcal{C}^{\perp}=\min\limits_{1 \leq i \leq {\ell}  }\{m_i\},$ then $\mathcal{C}$ is either an LCD or  a dual-containing code, i.e., $\mathcal{C}^{\perp}\subseteq \mathcal{C}.$ \item[(d)] If $\text{dim}_{\mathbb{F}_{q}}\mathcal{C} =\text{dim}_{\mathbb{F}_{q}}\mathcal{C}^{\perp}=\min\limits_{1 \leq i \leq {\ell}  }\{m_i\},$ then $\mathcal{C}$ is either an LCD or a self-dual code.
 \end{enumerate}\end{thm}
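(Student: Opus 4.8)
The plan is to deduce all four parts from a single dichotomy for $\mathcal{C}\cap\mathcal{C}^{\perp}$: \emph{either $\mathcal{C}\cap\mathcal{C}^{\perp}=\{0\}$, or $\dim_{\mathbb{F}_{q}}(\mathcal{C}\cap\mathcal{C}^{\perp})\geq \min_{1\leq i\leq\ell}m_i$.} Granting this, the four parts are purely a matter of dimension bookkeeping. For (a): if $\dim_{\mathbb{F}_{q}}\mathcal{C}<\min_i m_i$, then from $\mathcal{C}\cap\mathcal{C}^{\perp}\subseteq\mathcal{C}$ we get $\dim_{\mathbb{F}_{q}}(\mathcal{C}\cap\mathcal{C}^{\perp})<\min_i m_i$, so the dichotomy forces $\mathcal{C}\cap\mathcal{C}^{\perp}=\{0\}$; the case $\dim_{\mathbb{F}_{q}}\mathcal{C}^{\perp}<\min_i m_i$ is identical via $\mathcal{C}\cap\mathcal{C}^{\perp}\subseteq\mathcal{C}^{\perp}$. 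For (b): if $\dim_{\mathbb{F}_{q}}\mathcal{C}=\min_i m_i$ and $\mathcal{C}$ is not LCD, then $\dim_{\mathbb{F}_{q}}(\mathcal{C}\cap\mathcal{C}^{\perp})\geq\min_i m_i=\dim_{\mathbb{F}_{q}}\mathcal{C}$, which together with $\mathcal{C}\cap\mathcal{C}^{\perp}\subseteq\mathcal{C}$ gives $\mathcal{C}\cap\mathcal{C}^{\perp}=\mathcal{C}$, i.e. $\mathcal{C}\subseteq\mathcal{C}^{\perp}$. Part (c) is the mirror statement (use $\mathcal{C}\cap\mathcal{C}^{\perp}\subseteq\mathcal{C}^{\perp}$ and $\dim_{\mathbb{F}_{q}}\mathcal{C}^{\perp}=\min_i m_i$ to obtain $\mathcal{C}^{\perp}\subseteq\mathcal{C}$), and (d) is (b) and (c) together.

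It remains to prove the dichotomy, and this is where the real work lies. The plan is to show that, although $\mathcal{C}^{\perp}$ is only a $\Lambda'$-multi-twisted code (Section \ref{multi-twisted}), the code $\mathcal{C}\cap\mathcal{C}^{\perp}$ is \emph{both} a $\Lambda$-multi-twisted and a $\Lambda'$-multi-twisted code of length $n$ over $\mathbb{F}_{q}$; that is, $\mathcal{C}\cap\mathcal{C}^{\perp}$ is invariant under both $T_{\Lambda}$ and $T_{\Lambda'}$. Granting this, I would apply Theorem \ref{t23} to $\mathcal{C}\cap\mathcal{C}^{\perp}$ with $\Omega=\Lambda'$. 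Since $\lambda_i\neq\lambda_i^{-1}$ for every $i$, we have $\mathcal{I}_{\Lambda,\Lambda'}=\{1,2,\dots,\ell\}$; and if $\mathcal{C}\cap\mathcal{C}^{\perp}\neq\{0\}$, then picking $0\neq c\in\mathcal{C}\cap\mathcal{C}^{\perp}$, some block of $c$ is non-zero, so $\mathcal{J}_{\mathcal{C}\cap\mathcal{C}^{\perp}}\neq\emptyset$, whence $\mathcal{I}_{\Lambda,\Lambda'}\cap\mathcal{J}_{\mathcal{C}\cap\mathcal{C}^{\perp}}=\mathcal{J}_{\mathcal{C}\cap\mathcal{C}^{\perp}}$ is non-empty, and Theorem \ref{t23} yields $\dim_{\mathbb{F}_{q}}(\mathcal{C}\cap\mathcal{C}^{\perp})\geq\max_{i\in\mathcal{J}_{\mathcal{C}\cap\mathcal{C}^{\perp}}}m_i\geq\min_{1\leq i\leq\ell}m_i$, which is exactly the dichotomy.

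The hard part is the claim that $\mathcal{C}\cap\mathcal{C}^{\perp}$ is stable under both shift operators; this is not automatic, since $T_{\Lambda}$ and $T_{\Lambda'}$ do not commute and $\mathcal{C}^{\perp}$ is a priori invariant only under $T_{\Lambda'}$. I would attack it using the adjoint relations $T_{\Lambda}^{\mathsf{T}}=T_{\Lambda'}^{-1}$ and $T_{\Lambda'}^{\mathsf{T}}=T_{\Lambda}^{-1}$ for the standard inner product, the identities of Lemma \ref{l4}, and the fact that every $c\in\mathcal{C}\cap\mathcal{C}^{\perp}$ satisfies $c\perp\mathcal{C}$ and $c\perp\mathcal{C}^{\perp}$ (the latter because $c\in\mathcal{C}=(\mathcal{C}^{\perp})^{\perp}$), hence $c\perp(\mathcal{C}+\mathcal{C}^{\perp})$; pushing these observations through should give $T_{\Lambda}(c),\,T_{\Lambda'}(c)\in\mathcal{C}\cap\mathcal{C}^{\perp}$. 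Once this stability is in hand, everything above is routine, and no hypothesis beyond $\lambda_i\neq\lambda_i^{-1}$ for all $i$ is used.
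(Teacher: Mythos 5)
Your reduction of (a)--(d) to the single dichotomy, and the dimension bookkeeping that follows, is exactly what the paper does, and that part is fine. The genuine gap is the step you yourself flag as ``the hard part'': the claim that $\mathcal{C}\cap\mathcal{C}^{\perp}$ is stable under both $T_{\Lambda}$ and $T_{\Lambda'}$. You leave this as a sketch (``pushing these observations through should give\dots''), and it cannot in fact be pushed through, because the claim is false. Unwinding your adjoint identities, $T_{\Lambda}(\mathcal{C}\cap\mathcal{C}^{\perp})\subseteq\mathcal{C}^{\perp}$ is equivalent to $T_{\Lambda'}^{-1}(\mathcal{C})\subseteq\mathcal{C}+\mathcal{C}^{\perp}$, and nothing in the hypotheses forces this. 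Concretely, take $q=5,$ $\ell=2,$ $m_1=m_2=2,$ $\Lambda=(2,2)$ (so $\lambda_i^{-1}=3\neq\lambda_i$), and let $\mathcal{C}$ be the $1$-generator code generated by $(1,x),$ i.e.\ the $\mathbb{F}_5$-span of $u=(1,0;0,1)$ and $v=(0,1;2,0)$; it is $\Lambda$-multi-twisted since $T_{\Lambda}(u)=v$ and $T_{\Lambda}(v)=2u.$ Its Gram matrix with respect to $\{u,v\}$ is $\mathrm{diag}(2,0),$ so $\mathcal{C}\cap\mathcal{C}^{\perp}=\langle v\rangle$ is one-dimensional; but $T_{\Lambda}(v)=2u\notin\mathcal{C}^{\perp}$ (as $\langle u,u\rangle=2\neq 0$) and $T_{\Lambda'}(v)=(3,0;0,2)\notin\mathcal{C},$ so the intersection is invariant under neither shift. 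The same example kills the dichotomy itself: $\dim_{\mathbb{F}_5}(\mathcal{C}\cap\mathcal{C}^{\perp})=1<2=\min_i m_i.$

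Worse, no repair of this step can rescue the argument in the stated generality: in the example $\dim_{\mathbb{F}_5}\mathcal{C}=\dim_{\mathbb{F}_5}\mathcal{C}^{\perp}=2=\min_i m_i,$ yet $\mathcal{C}$ is neither LCD (since $0\neq v\in\mathcal{C}\cap\mathcal{C}^{\perp}$) nor self-orthogonal nor self-dual nor dual-containing, so conclusions (b)--(d) themselves fail for this code. You should know that the step you deferred is precisely the point the paper's own proof passes over: it simply says ``Note that $\mathcal{C}\cap\mathcal{C}^{\perp}$ is both a $\Lambda$-multi-twisted and a $\Lambda'$-multi-twisted code'' and then invokes Theorem \ref{t23}, so your proposal reproduces the paper's route faithfully, and your suspicion that the stability is ``not automatic'' --- the intersection of a $T_{\Lambda}$-invariant subspace with a $T_{\Lambda'}$-invariant subspace need not be invariant under either operator --- is exactly where both arguments break. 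A correct treatment has to either add hypotheses under which $\mathcal{C}\cap\mathcal{C}^{\perp}$ really is invariant under both shifts (equivalently, under which the dichotomy holds), or abandon this mechanism altogether; as written, your proof is incomplete at its decisive step, and the missing lemma is not merely unproved but false.
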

\begin{proof} \begin{enumerate}\item[(a)] Note that $\mathcal{C} \cap \mathcal{C}^{\perp}$ is both a $\Lambda$-multi-twisted and a $\Lambda'$-multi-twisted code of length $n$ over $\mathbb{F}_{q}.$ We assert that $\mathcal{C} \cap \mathcal{C}^{\perp}=\{0\}.$ Then by Theorem \ref{t23}, we get $\text{dim}_{\mathbb{F}_{q}} (\mathcal{C} \cap \mathcal{C}^{\perp} )\geq \min\limits_{1 \leq i \leq {\ell}  }\{m_i\}.$ Since $\mathcal{C}\cap \mathcal{C}^{\perp}$ is a subspace of both $\mathcal{C}$ and $\mathcal{C}^{\perp},$ we get $\text{dim}_{\mathbb{F}_{q}}\mathcal{C} \geq \min\limits_{1 \leq i \leq {\ell} }\{m_i\}$ and $\text{dim}_{\mathbb{F}_{q}}\mathcal{C}^{\perp} \geq \min\limits_{1 \leq i \leq {\ell} }\{m_i\},$ which contradicts our hypothesis. So we must have $\mathcal{C} \cap \mathcal{C}^{\perp}=\{0\}.$  \item[(b)] If $\mathcal{C} \cap \mathcal{C}^{\perp} \neq \{0\},$ then working as in part (a), we see that $\text{dim}_{\mathbb{F}_{q}} (\mathcal{C} \cap \mathcal{C}^{\perp} )\geq \min\limits_{1 \leq i \leq {\ell} }\{m_i\}.$ Now as $\text{dim}_{\mathbb{F}_{q}}\mathcal{C} =\min\limits_{1 \leq i \leq {\ell} }\{m_i\},$ we get $\mathcal{C} \cap \mathcal{C}^{\perp}=\mathcal{C},$ which implies that $\mathcal{C} \subseteq \mathcal{C}^{\perp}.$ This proves (b).\item[(c)] Its proof is similar to that of part (b).
\item[(d)] It follows immediately from parts (b) and (c).\end{enumerate}\vspace{-5mm}\end{proof}
\begin{thm} \label{L2} Let $\Lambda=(\lambda_1,\lambda_2,\cdots,\lambda_{\ell}),$ where $\lambda_1,\lambda_2,\cdots,\lambda_{\ell}$ are non-zero elements of $\mathbb{F}_{q}$ satisfying $\lambda_i \neq \lambda_i^{-1}$ for $1 \leq i \leq \ell.$  Let $\mathcal{C}$  be a $\rho$-generator $\Lambda$-multi-twisted  code of length $n$ over $\mathbb{F}_{q}$ such that either $\pi_i(\mathcal{C}) \neq <1>$ or $\pi_i(\mathcal{C}^{\perp}) \neq  <1>$ for $1 \leq i \leq \ell.$  Then $\mathcal{C}$ is an LCD code.
\end{thm}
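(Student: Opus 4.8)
The plan is to show directly that $\mathcal{C}\cap\mathcal{C}^{\perp}=\{0\}$, arguing by contradiction. Set $\mathcal{D}:=\mathcal{C}\cap\mathcal{C}^{\perp}$. As observed in the proof of Theorem \ref{L1}, $\mathcal{D}$ is simultaneously a $\Lambda$-multi-twisted and a $\Lambda'$-multi-twisted code of length $n$ over $\mathbb{F}_{q}$, where $\Lambda'=(\lambda_1^{-1},\ldots,\lambda_{\ell}^{-1})$. Since the shifts $T_{\Lambda}$ and $T_{\Lambda'}$ act block-wise, for each $i$ with $1\leq i\leq\ell$ the projection $\pi_i(\mathcal{D})$ is at the same time a $\lambda_i$-constacyclic and a $\lambda_i^{-1}$-constacyclic code of length $m_i$ over $\mathbb{F}_{q}$; moreover $\pi_i(\mathcal{D})\subseteq\pi_i(\mathcal{C})$ and $\pi_i(\mathcal{D})\subseteq\pi_i(\mathcal{C}^{\perp})$, because $\mathcal{D}\subseteq\mathcal{C}$ and $\mathcal{D}\subseteq\mathcal{C}^{\perp}$.

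The crux will be the following observation: if $\lambda_i\neq\lambda_i^{-1}$ and $\pi_i(\mathcal{D})\neq\{0\}$, then $\pi_i(\mathcal{D})=\langle 1\rangle$, i.e.\ $\pi_i(\mathcal{D})$ is all of $\mathbb{F}_{q}^{m_i}$. To see this, I would take a non-zero $u\in\pi_i(\mathcal{D})$ and, after replacing $u$ by a suitable $\lambda_i$-constacyclic shift of itself (which still lies in $\pi_i(\mathcal{D})$), assume that its last coordinate $u_{m_i-1}$ is non-zero. Since $\pi_i(\mathcal{D})$ is closed under both the $\lambda_i$-shift $T_{\lambda_i}$ and the $\lambda_i^{-1}$-shift $T_{\lambda_i^{-1}}$, the vector $T_{\lambda_i}(u)-T_{\lambda_i^{-1}}(u)=\big((\lambda_i-\lambda_i^{-1})u_{m_i-1},0,\ldots,0\big)$ lies in $\pi_i(\mathcal{D})$; as $\lambda_i\neq\lambda_i^{-1}$ and $u_{m_i-1}\neq 0$, this is a non-zero scalar multiple of $(1,0,\ldots,0)$, so $(1,0,\ldots,0)\in\pi_i(\mathcal{D})$. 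Applying the $\lambda_i$-shift repeatedly then puts every unit vector of $\mathbb{F}_{q}^{m_i}$ into $\pi_i(\mathcal{D})$, whence $\pi_i(\mathcal{D})=\langle 1\rangle$.

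With this in hand, suppose $\mathcal{D}\neq\{0\}$. Then $\pi_{i}(\mathcal{D})\neq\{0\}$ for some $i$, and since $\lambda_i\neq\lambda_i^{-1}$ the observation gives $\pi_i(\mathcal{D})=\langle 1\rangle$. Hence $\langle 1\rangle=\pi_i(\mathcal{D})\subseteq\pi_i(\mathcal{C})$ and $\langle 1\rangle=\pi_i(\mathcal{D})\subseteq\pi_i(\mathcal{C}^{\perp})$, and as $\pi_i(\mathcal{C})$ and $\pi_i(\mathcal{C}^{\perp})$ are contained in $\mathbb{F}_{q}^{m_i}=\langle 1\rangle$ anyway, this forces $\pi_i(\mathcal{C})=\langle 1\rangle$ and $\pi_i(\mathcal{C}^{\perp})=\langle 1\rangle$ simultaneously. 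This contradicts the hypothesis that for every $i$ either $\pi_i(\mathcal{C})\neq\langle 1\rangle$ or $\pi_i(\mathcal{C}^{\perp})\neq\langle 1\rangle$. Therefore $\mathcal{D}=\{0\}$, i.e.\ $\mathcal{C}$ is an LCD code.

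The step I expect to need the most care is the middle one: justifying that a suitable $\lambda_i$-shift of $u$ has non-zero final coordinate, and checking that subtracting the two constacyclic shifts genuinely isolates a single non-zero coordinate (which is exactly where $\lambda_i\neq\lambda_i^{-1}$ enters). Everything else is a short combination of the structural observation from Theorem \ref{L1} with elementary linear algebra; in particular the $\rho$-generator hypothesis is not really needed for this argument — it would only matter if one preferred to restate the conditions on $\pi_i(\mathcal{C})$ in terms of the generator polynomials $\gcd(a_{1,i}(x),\ldots,a_{\rho,i}(x),x^{m_i}-\lambda_i)$.
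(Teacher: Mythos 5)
Your overall strategy (pass to the block projections, use $\lambda_i\neq\lambda_i^{-1}$ to force a dichotomy, then contradict the hypothesis) is the same as the paper's, and your shift--subtraction computation is a nice self-contained substitute for the paper's appeal to Dinh's Corollary 2.7. But there is a genuine gap at the step everything rests on: you run the dichotomy inside $\pi_i(\mathcal{D})$ with $\mathcal{D}=\mathcal{C}\cap\mathcal{C}^{\perp}$, and for that you need $\mathcal{D}$ (hence $\pi_i(\mathcal{D})$) to be closed under both shifts. You take this from the remark in the proof of Theorem \ref{L1}, but that remark is only asserted there, never proved, and it is in fact false even under the standing hypothesis $\lambda_i\neq\lambda_i^{-1}$. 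Concretely, take $q=5$, $\ell=2$, $m_1=m_2=2$, $\Lambda=(2,2)$ and $\mathcal{C}=\{(f(x),(2+2x)f(x)):f(x)\in\mathbb{F}_{5}[x]/\langle x^2-2\rangle\}$, a $1$-generator $\Lambda$-multi-twisted code of length $4$. A direct computation gives $\mathcal{C}\cap\mathcal{C}^{\perp}=\mathrm{span}\{(1,3;4,3)\}$, while $T_{\Lambda}(1,3;4,3)=(1,1;1,4)$ has inner product $1$ with the codeword $(1,0;2,2)\in\mathcal{C}$, so $T_{\Lambda}(1,3;4,3)\notin\mathcal{C}^{\perp}$: the hull is not a $\Lambda$-multi-twisted code. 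Likewise $\pi_1(\mathcal{C}\cap\mathcal{C}^{\perp})=\mathrm{span}\{(1,3)\}$ is invariant under neither $T_{2}$ nor $T_{3}=T_{2^{-1}}$. So in your middle step the vectors $T_{\lambda_i}(u)$ and $T_{\lambda_i^{-1}}(u)$ need not lie in $\pi_i(\mathcal{D})$ at all, and the argument collapses there. (This particular $\mathcal{C}$ does not satisfy the hypothesis of Theorem \ref{L2} --- it cannot, if the theorem is true --- but your invariance claim is asserted independently of that hypothesis, so it cannot be salvaged just by pointing to the hypothesis.)

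The paper works with a different object: it applies the dichotomy not to $\pi_i(\mathcal{C}\cap\mathcal{C}^{\perp})$ but to $\pi_i(\mathcal{C})\cap\pi_i(\mathcal{C}^{\perp})$, where $\pi_i(\mathcal{C})$ genuinely is $\lambda_i$-constacyclic and $\pi_i(\mathcal{C}^{\perp})$ genuinely is $\lambda_i^{-1}$-constacyclic; it quotes Dinh's Corollary 2.7 to conclude that this intersection is $\{0\}$ or $\langle 1\rangle$, rules out $\langle 1\rangle$ by the hypothesis, and only then uses the inclusion $\pi_i(\mathcal{C}\cap\mathcal{C}^{\perp})\subseteq\pi_i(\mathcal{C})\cap\pi_i(\mathcal{C}^{\perp})$ to conclude $\mathcal{C}\cap\mathcal{C}^{\perp}=\{0\}$. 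If you want to keep your elementary shift--subtraction argument (which is essentially the standard proof of that dichotomy), you must first establish that the code you apply it to is honestly invariant under both $T_{\lambda_i}$ and $T_{\lambda_i^{-1}}$ --- that closure property is exactly what is missing in your write-up, and, as the example shows, it is not available for $\pi_i(\mathcal{C}\cap\mathcal{C}^{\perp})$. Your closing remark is correct that the $\rho$-generator hypothesis plays no real role.
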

\begin{proof} For  $1 \leq i \leq \ell,$ we see that the linear code $\pi_i(\mathcal{C})\cap \pi_i(\mathcal{C^\perp})$ is both $\lambda_i$-constacyclic and $\lambda_i^{-1}$-constacyclic code of length $m_i$ over $\mathbb{F}_{q}.$ Further, for each $i,$ as $\lambda_i \neq \lambda_i^{-1},$ by Corollary 2.7 of Dinh \cite{Dinh}, we see that either $\pi_i(\mathcal{C})\cap \pi(\mathcal{C}^\perp)= \{0\}$ or $\pi_i(\mathcal{C})\cap \pi(\mathcal{C}^\perp)= \langle 1 \rangle.$  Now since either $\pi_i(\mathcal{C}) \neq \langle 1 \rangle $ or $\pi_i(\mathcal{C}^\perp) \neq \langle 1 \rangle,$ we get $\pi_i(\mathcal{C})\cap \pi(\mathcal{C}^\perp)= \{0\}$ for each $i.$ As $\pi_i(\mathcal{C} \cap \mathcal{C}^{\perp})$ is a subspace of $\pi_i(\mathcal{C}) \cap \pi_i(\mathcal{C}^{\perp}),$ we get $\pi_i(\mathcal{C}\cap \mathcal{C}^{\perp})={0}$ for $1 \leq i \leq \ell.$ This implies that $\mathcal{C} \cap \mathcal{C}^{\perp}=\{0\},$ i.e.,  $\mathcal{C}$ is an LCD code.
\end{proof}

{\cor Let $\lambda_1,\lambda_2,\cdots,\lambda_{\ell} \in \mathbb{F}_{q}\setminus\{0\}$ be such that $\lambda_i \neq \lambda_i ^{-1}$ for all $1 \leq i \leq {\ell}$ and the polynomials $x^{m_1}-\lambda_1, x^{m_2}-\lambda_2,\cdots,x^{m_{\ell}}-\lambda_{\ell}$ are pairwise coprime in $\Fq[x].$ Then any $(\lambda_1,\lambda_2,\cdots,\lambda_{\ell})$-multi-twisted code of length $n$ over $\mathbb{F}_{q}$ is an LCD code.  }
\begin{proof} It follows immediately from Theorems \ref{parity1} and \ref{L2}.
\end{proof}

In the following example, we  illustrate Theorems \ref{L1}(a) and \ref{L2}. 
{\ex Let $ q=5,$ $\ell=2,$ $m_1=m_2=3,$  $\Lambda =(3,2)$  and $\mathbb{F}_{5}=\mathbb{Z}_{5}.$ Here we have $ V=V_1 \times V_2=\frac{\mathbb{F}_{5}[x]}{\langle x^3-3 \rangle} \times \frac{\mathbb{F}_{5}[x]}{\langle x^3-2 \rangle}.$ Now we see that the irreducible factorizations of the polynomials $x^3-3$ and $x^3-2$ over $\mathbb{F}_{5}$ are given by $x^3-3 = (x+3)(x^2+2x+4)$ and $x^3-2 = (x+2)(x^2+3x+4) ,$ respectively. Let $\mathcal{C}$ be a $1$-generator $\Lambda$-multi-twisted code of length $6$ over $\mathbb{F}_5$ with the  generating set as $\{(x+3,x+2)\}.$ It is easy to observe that $\pi_1(\mathcal{C})=\big<\gcd(x+3,x^3-3)\big> =\big<x+3\big> \neq \big< 1 \big>$ and $ \pi_2(\mathcal{C})=\big<\gcd(x+2,x^3-2)\big> = \big<x+2\big> \neq \big< 1 \big>.$ So by Theorem \ref{L2}, we see that $\mathcal{C}$ is an LCD code.  On the other hand, we note that $ V'=V_1' \times V_2'=\frac{\mathbb{F}_{5}[x]}{\langle x^3-2 \rangle} \times \frac{\mathbb{F}_{5}[x]}{\langle x^3-3 \rangle}.$  By Theorem \ref{parity1}, we obtain $\mathcal{C}^\perp = \big< (x^2+3x+4,0),(0,x^2+2x+4)\big>. $ It is easy to see that $\mathcal{C}_{1}^{\perp}=\text{Span}_{F_1} \{ (2,0)\},$ $\mathcal{C}_{2}^{\perp}=\text{Span}_{F_2}\{(0,2)\}$ and $\mathcal{C}_{3}^{\perp}=\mathcal{C}_{4}^{\perp}=\{0\},$ where $F_1 \simeq F_2 \simeq \mathbb{F}_{5}.$ Using Theorem \ref{t11}, we get $\text{dim}_{\mathbb{F}_{5}}\mathcal{C}^{\perp}=2.$ By applying Theorem \ref{L1}(a) also, we see that $\mathcal{C}$ is an LCD code.}

In the following example, we  show that  the sufficient conditions derived in Theorems \ref{L1}(a) are not necessary for a $\Lambda$-multi-twisted code to be LCD. 
 {\ex \label{ex} Let $ q=7,$ $\ell=2,$ $m_1=m_2=2,$  $\Lambda =(2,5)$  and $\mathbb{F}_{7}=\mathbb{Z}_{7}.$ Here we have $ V=V_1 \times V_2=\frac{\mathbb{F}_{7}[x]}{\langle x^2-2 \rangle} \times \frac{\mathbb{F}_{7}[x]}{\langle x^2-5 \rangle}$ and  $ V'=V_1' \times V_2'=\frac{\mathbb{F}_{7}[x]}{\langle x^2-4 \rangle} \times \frac{\mathbb{F}_{7}[x]}{\langle x^2-3 \rangle}.$ It is easy to see that the polynomials $x^2-3$ and $x^2-5$ are irreducible over $\mathbb{F}_{7},$ and that the irreducible factorizations of the polynomials $x^2-3$ and $x^2-5$ over $\mathbb{F}_{7}$ are given by $x^2-2 = (x+3)(x+4)$ and $x^2-4 = (x+2)(x+5) ,$ respectively. Let $\mathcal{C}$ be a $1$-generator $\Lambda$-multi-twisted code of length $4$ over $\mathbb{F}_7$ with the  generating set as $\{(x+1,0)\}.$ It is easy to observe that $\pi_1(\mathcal{C})=\big<\gcd(x+1,x^2-2)\big> =\big<1\big> $ and $ \pi_2(\mathcal{C})=\big<\gcd(0,x^2-5)\big> = \{0\}.$ Further, as the polynomials $x^2-2$ and $x^2-5$ are coprime over $\mathbb{F}_{7},$ by applying Theorem \ref{parity1}, we obtain $\mathcal{C}^\perp = \big< (0,0),(0,1)\big> .$ From this, we get $\pi_1(\mathcal{C}^\perp)=\big<\gcd(0,0,x^2-4)\big> =\{0\}$ and $ \pi_2(\mathcal{C}^\perp)=\big<\gcd(0,1,x^2-3)\big> = \big<1\big> .$ Therefore by Theorem \ref{L2}, we see that $\mathcal{C}$ is an LCD code.  On the other hand, by applying Theorem \ref{parity0}(a), we get $h(x) = x^2-2.$ Using Theorem \ref{parity0}(b), we get $\text{dim}_{\mathbb{F}_{7}}\mathcal{C}=2.$ Further, it is easy to see that  $\mathcal{C}_{1}^{\perp}=\mathcal{C}_{2}^{\perp}=\{0\}$ and $\mathcal{C}_{3}^{\perp}=\text{Span}_{F_3}\{(0,1)\},$ where $F_3=\frac{\mathbb{F}_{7}[x]}{\langle x^2-3 \rangle}\simeq \mathbb{F}_{49}.$ By  Theorem \ref{t11}, we get $\text{dim}_{\mathbb{F}_{7}}\mathcal{C}^{\perp}=2.$ This shows that the code $\mathcal{C}$ does not satisfy hypotheses of  Theorem \ref{L1}(a).  }
 
 In the following example, we  show that  the sufficient conditions derived in Theorems \ref{L2} are not necessary for a $\Lambda$-multi-twisted code to be LCD. 

   {\ex \label{exx} Let $ q=5,$ $\ell=2,$ $m_1=m_2=3,$  $\Lambda =(3,3)$  and $\mathbb{F}_{5}=\mathbb{Z}_{5}.$ Here we have $ V=V_1 \times V_2=\frac{\mathbb{F}_{5}[x]}{\langle x^3-3 \rangle} \times \frac{\mathbb{F}_{5}[x]}{\langle x^3-3 \rangle}$ and  $ V'=V_1' \times V_2'=\frac{\mathbb{F}_{5}[x]}{\langle x^3-2 \rangle} \times \frac{\mathbb{F}_{5}[x]}{\langle x^3-2 \rangle}.$ It is easy to see that the irreducible factorizations of the polynomials $x^3-3$ and $x^3-2$ over $\mathbb{F}_{5}$ are given by $x^3-3 = (x-2)(x^2+2x+4)$ and $x^3-2 = (x-3)(x^2+3x+4) ,$ respectively. Now let $g_1(x)=x-2,g_2(x)=x^2+2x+4,h_1(x)=x-3 \text{ and } h_2(x)=x^2+3x+4.$ Here we can easily observe that $g_1^*(x)= h_1(x)$ and $g_2^*(x)= h_2(x).$ Let $\mathcal{C}$ be a $1$-generator $\Lambda$-multi-twisted code of length $6$ over $\mathbb{F}_{5}$ with the  generating set as $\{(1,x+1)\}.$  By applying Chinese Remainder Theorem, we get $V=\big( F_1 ,F_1 \big) \oplus(F_2 , F_2 \big)$ and $V'=\big( H_1, H_1 \big) \oplus(H_2,H_2 \big),$ where $F_w = \frac{\mathbb{F}_{5}[x]}{\langle g_w(x) \rangle}$ and $H_w = \frac{\mathbb{F}_{5}[x]}{\langle h_w(x) \rangle}$ for $ 1 \leq w \leq 2.$ From this and applying Theorem \ref{t11}, we see that the constituents of $\mathcal{C}$ are given by  $\mathcal{C}_1 = \left< (1,3)\right>$ and $\mathcal{C}_2 = \left< (1,x+1)\right>.$ Further, in view of  \eqref{dbform}, we obtain $\mathcal{C}_1^\perp = \left< (-3,1)\right>$ and $\mathcal{C}_2^\perp = \left< (1,2x+3)\right>.$ Now  by applying Chinese Remainder Theorem, we get $ \mathcal{C}^\perp$ is generated by $(-2x^2-x+3,x^2+2).$ Moreover, it is easy to see that $\pi_1(\mathcal{C})=\big<\gcd(1,x^3-3)\big> =\big<1\big> ,$ $ \pi_2(\mathcal{C})=\big<\gcd(x+1,x^3-3)\big> = \big<1\big>,$ $\pi_1(\mathcal{C}^\perp)=\big<\gcd(-2x^2-x+3,x^3-2)\big> =\big<1\big>$ and $\pi_2(\mathcal{C}^\perp)=\big<\gcd(x^2+2,x^3-2)\big> =\big<1\big>,$ which shows that the code $\mathcal{C}$ does not satisfy the hypotheses of  Theorem \ref{L2}. On the other hand,  by  Theorem \ref{t11}, we have $\text{dim}_{\mathbb{F}_{5}}\mathcal{C}=3$ and $\text{dim}_{\mathbb{F}_{5}}\mathcal{C}^\perp =3.$ It is easy to observe that $\mathcal{C} \neq \mathcal{C}^{\perp}.$ Therefore by Theorem \ref{L1}(d), we see that $\mathcal{C}$ is  an LCD  code. }

\section{Trace description of $\Lambda$-multi-twisted codes}\label{Traceformula}
In this section, we shall provide a trace description for $\Lambda$-multi-twisted codes of length $n$ over $\mathbb{F}_{q}$ by extending the work of G\"{u}neri et al. \cite{cem} to $\Lambda$-multi-twisted codes.  
Towards this, for $1 \leq w \leq r$ and $1 \leq i \leq {\ell},$ we recall that if $\epsilon_{w,i}=1,$ then $g_w(x)$ divides $x^{m_i}-\lambda_i$ in $\mathbb{F}_{q}[x],$ and the ideal  $\big\langle \frac{x^{m_i}-\lambda_i}{g_w(x)}\big\rangle$ is a minimal $\lambda_i$-constacyclic code  of length $m_i$ over $\mathbb{F}_{q},$ whose generating idempotent is denoted by  $\Theta_{w,i}.$ If $\epsilon_{w,i}=0$ for some $w$ and $i,$ then  we shall denote the zero codeword of length $m_i$  by $\Theta_{w,i}.$  Now by Theorem 3.1 of Sharma and Rani \cite{srani}, we see that there exist ring isomorphisms $\phi_{w,i}:\langle \Theta_{w,i} \rangle \rightarrow \epsilon_{w,i}F_w$ and $\psi_{w,i}: \epsilon_{w,i}F_w \rightarrow \langle \Theta_{w,i} \rangle,$ defined as  $\phi_{w,i}(a(x))=\epsilon_{w,i}a(\alpha_w)\text{ for each }a(x) \in \langle \Theta_{w,i} \rangle$  and 
 \begin{equation}\label{eqnn} 
 \psi_{w,i}(\gamma)=\frac{1}{m_i}\big(Tr_{F_w/\mathbb{F}_{q}}(\gamma),Tr_{F_w/\mathbb{F}_{q}}(\gamma\alpha_w^{-1}), \cdots,Tr_{F_w/\mathbb{F}_{q}}(\gamma\alpha_w^{-(m_i-1)} )\big)\text{ for each }\gamma \in \epsilon_{w,i}F_w, \end{equation} where  $Tr_{F_w/\mathbb{F}_{q}}$ is the trace map from $F_w$ onto $\mathbb{F}_{q} $ and $\alpha_w$ is a zero of $g_w(x)$ in $F_w.$
 Further, note that the ring isomorphisms $\phi_{w,i}$ and $\psi_{w,i}$ are inverses of each other, and that $\psi_{w,i}(\epsilon_{w,i}1_w)=\Theta_{w,i},$ where $1_w$ is the multiplicative identity of $F_w.$ We shall view $V=\prod\limits_{i=1}^{\ell}V_i$ and $\mathcal{G}_{w}= \big(\epsilon_{w,1} F_w, \epsilon_{w,2} F_w,\cdots, \epsilon_{w,\ell} F_w\big)$  as  rings with respect to the coordinate-wise addition $+$ and coordinate-wise multiplication $\odot$ for each $w.$   In view of this, $1_V:=(1,1,\cdots,1)$ and $1_{\mathcal{G}_{w}}:=(\epsilon_{w,1}1_w,\cdots,\epsilon_{w,{\ell}}1_w)$ respectively are the multiplicative identities of $V$ and $\mathcal{G}_{w}$  for each $w.$ Now  for $1 \leq w \leq r,$ let us define the maps  $\Phi_w: V \rightarrow \mathcal{G}_{w}$ and $\Psi_w:\mathcal{G}_{w}\rightarrow V$ as 
$\Phi_w(a_1(x),a_2(x),\cdots,a_{\ell}(x))= (\epsilon_{w,1}a_1(\alpha_w),\epsilon_{w,2}a_2(\alpha_w),\cdots,\epsilon_{w,{\ell}}a_{\ell}(\alpha_w))$ for each $(a_1(x),a_2(x),\cdots,a_{\ell}(x)) \in V$ and $\Psi_w(\gamma_1,\gamma_2,\cdots,\gamma_{\ell})= (\psi_{w,1}(\gamma_1),\\\psi_{w,2}(\gamma_2),\cdots,\psi_{w,{\ell}}(\gamma_{\ell}))$ for each $ (\gamma_1,\gamma_2,\cdots,\gamma_{\ell}) \in \mathcal{G}_{w}.$ Note that both $\Phi_w \text{ and } \Psi_w$ are $\mathbb{F}_{q}$-linear maps and are ring homomorphisms. Moreover, for each $w,$ the restriction map $\Phi_w \restriction_{(\langle \Theta_{w, 1}\rangle, \langle \Theta_{w,2}\rangle, \cdots,\langle \Theta_{w,\ell }\rangle)}$ and the map $\Psi_w$ are inverses of each other. 
 For $1 \leq w \leq r,$ let us define $\Theta_w  = (\Theta_{w,1},\Theta_{w,2},\cdots,\Theta_{w, \ell}).$ It is easy to see that $V=\bigoplus\limits_{w=1}^r\langle\Theta_w\rangle,$ $\sum\limits_{w=1}^{r}\Theta_w=1_V,$  $\langle\Theta_w\rangle = (\langle\Theta_{w,1}\rangle, \langle\Theta_{w,2}\rangle,\cdots,\langle\Theta_{w,\ell}\rangle),$   
 $\Theta_w\odot \Theta_w=\Theta_w,$ $\Theta_{w'}\odot \Theta_w=0$    for each $w\neq w'.$  
 
Next  the concatenation of $\langle \Theta_w \rangle = (\langle \Theta_{w,1}\rangle , \langle \Theta_{w,2}\rangle , \cdots , \langle \Theta_{w,\ell}\rangle) $ and a linear code $\mathcal{D}$ of length $\ell$ over $F_{w} \simeq \mathbb{F}_{q^{d_w}}$ is defined as $\langle\Theta_w\rangle \Box \mathcal{D} = \big\{(\psi_{w,1}(\delta_{w,1}),\psi_{w,2}(\delta_{w,2}),\cdots,\psi_{w,{\ell}}(\delta_{w,{\ell}})):\delta_w=(\delta_{w,1},\delta_{w,2},\cdots,\delta_{w,{\ell}})\in \mathcal{D} \big\}.$ In the following theorem, we shall view $\Lambda$-multi-twisted codes as direct sums of certain concatenated codes. 
{\thm \label{t22} \begin{enumerate}\item[(a)] Let $ \mathcal{C}$ be a $\Lambda$-multi-twisted code of length $n$ over $\mathbb{F}_{q}$ with the constituents as $\mathcal{C}_{1},$ $\mathcal{C}_{2}, \cdots, \mathcal{C}_{r}.$ If $\tilde{\mathcal{C}_w}:=\mathcal{C}\odot \Theta_w  $  for $1 \leq w \leq r,$  then we have $\mathcal{C}=\bigoplus\limits_{w=1}^r \langle\Theta_w\rangle \Box \Phi_w(\tilde{\mathcal{C}_w}).$ Moreover,   $\mathcal{C}_w=\Phi_w(\tilde{\mathcal{C}_w})$ holds for $1 \leq w \leq r.$   As a consequence, we have $\mathcal{C}=\bigoplus\limits_{w=1}^r \langle\Theta_w\rangle \Box \mathcal{C}_w.$
\item[(b)] Conversely, let $\mathfrak{C}_w (\subseteq \mathcal{G}_w )$ be a linear code of length ${\ell}$ over $F_w$ for $1 \leq w \leq r.$ Then $\mathcal{C}=\bigoplus\limits_{w=1}^r \langle\Theta_w\rangle \Box \mathfrak{C}_w$ is a  $\Lambda$-multi-twisted code of  length $n$ over $\mathbb{F}_{q}.$ \end{enumerate}}
\begin{proof} Working in a similar manner as in Theorem 3.4 and Remark 3.5 of G\"{u}neri et al. \cite{cem}, the desired result follows.
 \end{proof}
 In the following theorem, we provide a trace description for $\Lambda$-multi-twisted codes of length $n$ over $\mathbb{F}_{q}$ using their concatenated structure. 
{\thm \label{trace} Let $\mathcal{C}$ be a $\Lambda$-multi-twisted code of length $n$ over $\mathbb{F}_{q}$ with the constituents as $\mathcal{C}_{1}, \mathcal{C}_{2},\cdots, \mathcal{C}_{r}.$  For $\delta_w=(\delta_{w,1},\delta_{w,2},\cdots,\delta_{w,{\ell}}) \in \mathcal{C}_{w}$ with $1 \leq w \leq r,$ let us define \begin{equation*}c_i(\delta_1,\delta_2,\cdots,\delta_{\ell})= \frac{1}{m_i}\Big(\sum\limits_{w=1}^r Tr_{F_w/\mathbb{F}_{q}}(\delta_{w,i}), \sum\limits_{w=1}^rTr_{F_w/\mathbb{F}_{q}}(\delta_{w,i}\alpha_w^{-1}), \cdots, \sum\limits_{w=1}^rTr_{F_w/\mathbb{F}_{q}}(\delta_{w,i}\alpha_w^{-(m_i-1)})\Big) \end{equation*} for $1 \leq i \leq \ell.$  Then we have \begin{equation*}\mathcal{C}=\{ \big(c_1(\delta_1,\delta_2,\cdots,\delta_{\ell}),c_2(\delta_1,\delta_2,\cdots,\delta_{\ell}),\cdots,c_{\ell}(\delta_1,\delta_2,\cdots,\delta_{\ell})\big): \delta_w=(\delta_{w,1},\delta_{w,2},\cdots,\delta_{w,{\ell}}) \in \mathcal{C}_{w} \text{ for }1 \leq w \leq r \} .\end{equation*}}
\vspace{-6mm}\begin{proof} 
By Theorem \ref{t22},  we see that the code $\mathcal{C}$ has the concatenated structure $\mathcal{C}=\bigoplus\limits_{w=1}^r \langle\Theta_w\rangle \Box \mathcal{C}_w,$
where $\langle\Theta_w\rangle \Box \mathcal{C}_w = \big\{(\psi_{w,1}(\delta_{w,1}),\psi_{w,2}(\delta_{w,2}),\cdots,\psi_{w,{\ell}}(\delta_{w,{\ell}})):\delta_w=(\delta_{w,1},\delta_{w,2},\cdots,\delta_{w,{\ell}})\in \mathcal{C}_w \big\}.$  From this, we get 
$\mathcal{C}=\big\{(\sum\limits_{w=1}^r\psi_{w,1}(\delta_{w,1}),\sum\limits_{w=1}^r\psi_{w,2}(\delta_{w,2}),\cdots,\sum\limits_{w=1}^r\psi_{w,{\ell}}(\delta_{w,{\ell}})):\delta_w=(\delta_{w,1},\delta_{w,2},\cdots,\delta_{w,{\ell}})\in \mathcal{C}_w \big\}.$  By \eqref{eqnn}, we see that $\sum\limits_{w=1}^r\psi_{w,i}(\delta_{w,i})= \frac{1}{m_i}\big(\sum\limits_{w=1}^rTr_{F_w/\mathbb{F}_{q}}(\delta_{w,i}), \sum\limits_{w=1}^rTr_{F_w/\mathbb{F}_{q}}(\delta_{w,i}\alpha_w^{-1}), \sum\limits_{w=1}^rTr_{F_w/\mathbb{F}_{q}}(\delta_{w,i}\alpha_w^{-2}),\cdots, \sum\limits_{w=1}^rTr_{F_w/\mathbb{F}_{q}}(\delta_{w,i}\alpha_w^{-(m_i-1)})\big)$ for $1 \leq i \leq \ell,$ from which  the desired result follows.
\end{proof}
We shall illustrate the above theorem in the following example:
{\ex Let $ q=7,$ $\ell=2,$ $m_1=2,$ $m_2=4, $ $\Lambda =(2,4)$ and $\mathbb{F}_{7}=\mathbb{Z}_{7}.$ Here we have $V=V_1 \times V_2,$ where $V_1 =\frac{\mathbb{F}_{7}[x]}{\langle x^2-2 \rangle} $ and $V_2= \frac{\mathbb{F}_{7}[x]}{\langle x^4-4 \rangle} .$  Further, we see that the irreducible factorizations of the polynomials $x^2-2$ and $x^4-4$  over $\mathbb{F}_{7}$ are given by  $x^2-2 = (x+3)(x+4),x^4-4 = (x+3)(x+4)(x^2+2).$ If we take $g_1(x)=x+3,g_2(x)=x+4$ and $g_3(x)=x^2+2,$ then we have $F_1\simeq F_2 \simeq \mathbb{F}_{7}$ and  $F_3\simeq \mathbb{F}_{49}.$ From this and by applying Chinese Remainder Theorem, we get $V \simeq (F_1,F_1) \oplus (F_2,F_2)\oplus (\{0\},F_3).$ Now if $\mathcal{C}$ is a $(2,4)$-multi-twisted code of length $6$ over $\mathbb{F}_{7}$ with the constituents as $\mathcal{C}_{1},$ $\mathcal{C}_{2}$ and $\mathcal{C}_{3},$ then by Theorem \ref{trace}, the code $\mathcal{C}$ is given by
\vspace{2mm}\\ $\left\{\big(\frac{a+c}{2},\frac{2a+5c}{2},\frac{b+d+2e}{4},\frac{2b+5d+2f}{4},\frac{4b+4d+2e\alpha_3^{-2}}{4},\frac{b-d+2f\alpha_3^{-2}}{4}\big):
(a,b)\in \mathcal{C}_1, (c,d) \in \mathcal{C}_2,  (0,e+\alpha_3f) \in \mathcal{C}_3\right\} ,$\vspace{2mm}\\
where  $\alpha_3$ is a root of the polynomial $g_3(x)$ in $F_3.$}

In the following theorem, we obtain a minimum distance bound for $\Lambda$-multi-twisted codes of length $n$ over $\mathbb{F}_{q}$ using their multilevel concatenated structure.
{\thm \label{bound} Let $\mathcal{C}$ be a $\Lambda$-multi-twisted code of length $n$ over $\mathbb{F}_{q}$ with the non-zero constituents as  $\mathcal{C}_{w_1},\mathcal{C}_{w_2},\cdots,\mathcal{C}_{w_t},$ where $1 \leq w_1,w_2,\cdots, w_t \leq r.$  Let $\mathfrak{d}_{j}$ be the minimum Hamming distance of the code $\mathcal{C}_{w_j}$ for $1 \leq j \leq t.$ Let us assume that $\mathfrak{d}_1\leq\mathfrak{d}_2\leq \cdots \leq \mathfrak{d}_t.$ Let us define $\mathfrak{K}_v= \min\limits_{\stackrel{I \subseteq \{1,2,\cdots, \ell\}}{|I|=\mathfrak{d}_v}}\Big\{ \sum\limits_{g\in I} d_{\min}(\langle \theta_{w_1,g} \rangle \oplus \langle \theta_{w_2,g} \rangle \oplus \cdots \oplus \langle \theta_{w_t,g} \rangle)\Big\}$ for $ v \in \{1,2,\cdots,t\}.$ Then the minimum Hamming distance $d_{\min}(\mathcal{C})$ of the code $\mathcal{C}$ satisfies  $$d_{\min}(\mathcal{C})\geq \text{min}\{\mathfrak{K}_1,\mathfrak{K}_2,\cdots,\mathfrak{K}_t\}.$$
(Throughout this paper,  $d_{\min}$ denotes the minimum Hamming distance of the code.)}
\begin{proof}
Working in a similar manner as in Theorem 4.2 of G\"{u}neri et al. \cite{cem}, the desired result follows.
\end{proof}


\section*{Conclusion and future work}
In this paper,  the algebraic structure and duality properties of multi-twisted codes of length $n$ over $\mathbb{F}_{q}$ are studied  with respect to the standard inner product  on $\mathbb{F}_{q}^n.$ A method to construct these codes is provided and two lower bounds on  their minimum Hamming distances are also obtained. It would be interesting to further develop generator theory for these codes and to study their duality properties with respect to  other inner products over finite fields.

\end{document}